\documentclass[reqno,11pt]{amsart}
\usepackage[dvipsnames]{xcolor}
\usepackage{geometry,enumitem,comment,bbold}

\usepackage[utf8]{inputenc}
\usepackage{amsmath}
\usepackage{graphicx}
\DeclareGraphicsExtensions{.pdf,.png,.jpg}
\usepackage{setspace}
\usepackage[round,comma]{natbib}         
\usepackage{amsthm}
\usepackage{amsfonts}
\usepackage[normalem]{ulem}  
\usepackage{amssymb}
\usepackage{dsfont}

\usepackage{booktabs}
\usepackage{mathrsfs}
\usepackage{bm} 
\usepackage{mathtools}   
\usepackage{subcaption}

\newtheorem{theorem}{Theorem}[section]
\newtheorem{corollary}[theorem]{Corollary}      %
\newtheorem{lemma}[theorem]{Lemma}              %
\newtheorem{proposition}[theorem]{Proposition}  %

\theoremstyle{definition}
\newtheorem{example}[theorem]{Example} %
\newtheorem{definition}[theorem]{Definition} %
\newtheorem{remark}[theorem]{Remark}%

\numberwithin{equation}{section}

\newcommand{\M}{\mathcal M}
\newcommand{\La}{\Lambda}
\newcommand{\R}{\mathbb{R}} 
\newcommand{\Q}{\mathbb{Q}}

\newcommand{\E}{\mathbb{E}}
\newcommand{\tn}{\textnormal}

\newcommand{\ind}{\mathbf{1}}

\renewcommand{\P}{\mathbb{P}}

\newcommand{\N}{\mathbb{N}}

\newcommand{\eps}{\varepsilon}
\newcommand{\supp}{\textnormal{supp}}

\newcommand{\CF}{\mathcal F}

\newcommand{\CL}{\mathcal L}
\renewcommand{\Q}{{\mathbb Q}}

\newcommand{\w}{\widehat}

\renewcommand{\and}{\quad\text{and}\quad}
\newcommand{\Scal}{\mathcal S}
\newcommand{\G}{\mathcal G}

\newcommand{\quiff}{\quad\iff\quad}
\newcommand{\TV}{\mathrm{TV}}
\newcommand{\gtge}{%
  \mathrel{\ooalign{%
    \hfil$\raise0.7ex\hbox{$\scriptstyle>$}$\hfil\cr
    \hfil$\raise-0.7ex\hbox{$\scriptstyle\geq$}$\hfil\cr
  }}%
}

\DeclareMathOperator{\esssup}{esssup}

\usepackage[colorlinks,urlcolor=red,citecolor=blue,linkcolor=magenta]{hyperref}

\begin{document}

\setstretch{1.3}

\title{Lambda-quantiles under the microscope}

\author[F.~Bellini]{Fabio Bellini}
		\address{Department of Statistics and Quantitative Methods, University of Milan-Bicocca, Italy.}
		\email{fabio.bellini@unimib.it}

    \author[F.-B.~Liebrich]{Felix-Benedikt Liebrich}
		\address{Amsterdam School of Economics, University of Amsterdam, Netherlands.}
		\email{f.b.liebrich@uva.nl}

\begin{abstract} 
We study Lambda-quantiles, a generalisation of classical quantiles in which
the constant probability level $\lambda \in[0,1]$ is replaced by a functional
parameter $\Lambda\colon\R\to[0,1]$. 
We consider the general case of non-monotone $\La$, which arises naturally if closure properties of the class of corresponding Lambda-quantiles with respect to inf-aggregation or with respect to mixtures are required. 
As preliminary results, we characterise finiteness, constancy, and what we call the attainment property known from classical quantiles. We then consider the problem of reconstructing $\Lambda$ from the values of $\Lambda$-quantiles on a suitable family of simple distributions, showing its identifiability under mild assumptions. 
Next, we substantially refine several results obtained in the literature on weak upper and lower semicontinuity and on the property of convexity of the level sets with respect to mixtures, 
obtaining in both cases almost complete characterisations without any monotonicity assumption. 
We then move to the case in which 
$\Lambda$ has bounded variation, which enables us to prove a mixture representation result: any such 
$\Lambda$-quantile can be rewritten as a Lambda-quantile with an increasing functional parameter, evaluated at a mixture of the original distribution with a fixed reference distribution at a fixed weight, thus reducing the complexity of the parameter from bounded variation to monotone. 
Finally, we introduce and study the notion of the ordinal covariance group of a risk measure, showing that in the case of a $\Lambda$-quantile it coincides with the compositional invariance group of $\Lambda$ and with a certain group of measure-preserving transformations of the signed measure associated with $\Lambda$. 

\noindent {\em JEL Classification:} C02 $\cdotp$ D81 $\cdotp$ G32

\noindent {\em Mathematics Subject Classification (MSC2020):} 91G70 $\cdotp$ 60E05
$\cdotp$ 26A45 $\cdotp$ 62P05

\noindent{\em Keywords:} Lambda-quantiles $\cdotp$ weak continuity $\cdotp$ convex level sets $\cdotp$  bounded variation $\cdotp$ ordinal covariance 
\end{abstract}

\maketitle

\thispagestyle{empty}

\section{Introduction}
Lambda-quantiles, 
originally introduced in the financial literature by \cite{FMP2014}, generalise the classical notion of quantile of a probability distribution by replacing the fixed probability level $\lambda \in [0,1]$ with a functional parameter $\La \colon \R \to [0,1]$. 
More explicitly, the left and right $\La$-quantile of a cumulative distribution function $F\colon\R\to[0,1]$ are respectively defined by
\[
Q_\La^-(F) = \inf \{ x \in \R \mid F(x) \geq \La(x)\}, \quad Q_\La^+(F) = \inf \{ x \in \R \mid F(x) > \La(x)\}.
\]

The simplest case is the one in which $\La$ attains two values, as in the following example:
$$
\La(x) = 
\begin{cases}
\lambda_1 \text { if } x <z,\\
\lambda_2 \text { if } x \ge z, 
\end{cases}
$$
for some $z \in \R$ and $\lambda_1, \lambda_2 \in [0,1]$. In this case, the definition above yields for the left quantile that 
\[Q_\La^-(F) = 
\begin{cases}
Q_{\lambda_1}^-(F) &\text { if } Q_{\lambda_1}^-(F) <  z,\\
z &\text { if } Q_{\lambda_2}^- (F) \leq z \leq Q_{\lambda_1}^- (F),\\ 
Q_{\lambda_2}^-(F) &\text { if } Q_{\lambda_1}^-(F) \geq z\text { and } Q_{\lambda_2}^-(F) > z,\end{cases}\]
where $Q_\lambda^-$ is the usual left quantile.
This example illustrates in the simplest form the key idea underlying the definition of $\La$-quantiles, that is to allow multiple probability levels. 
The three cases correspond to the probability level
jumping from $\lambda_1$ to $\lambda_2$ at $z$, and the $\La$-quantile follows
the classical $\lambda_1$- or $\lambda_2$-quantile away from $z$, and is equal to
$z$ in the intermediate regime. In full generality, the probability level $\La(x)$ is a function of the same variable $x \in \R$
at which $F$ is evaluated. 

Lambda-quantiles are attracting growing attention in the financial and statistical literature for several reasons, among which we believe the following are worth stressing. First, like the usual quantiles and in contrast to common financial risk measures such as the Expected Shortfall, they are defined also for random variables with infinite mean, enabling their application to very heavy-tailed phenomena. 
Second, they are often the minimisers of an expected loss function that generalises that of the usual quantiles, so they are amenable to statistical procedures similar to quantile regression. Third, 
they often have qualitative robustness properties similar to those of the usual quantiles, and  
the statistical properties of empirical Lambda-quantiles can be derived along the lines of those of the usual quantiles.  
Fourth, they can be backtested with methodologies that extend the usual approach based on counting the number of violations of Value at Risk. 
Finally, they enjoy a rich mathematical theory, with different axiomatic foundations, mainly in the case of a decreasing $\La$. 

Early papers such as \cite{FMP2014}, \cite{BPR2017},
\cite{CP2018}, \cite{HMP2018} and \cite{Ince}, focused on the financial
applications of $\La$-quantiles as an alternative to Value at Risk, to which they
refer under the name of Lambda-VaR. The motivation was to give the risk manager
greater flexibility by allowing the confidence level to vary across loss
scenarios, rather than fixing a single probability level, while remaining inside a VaR-like framework. Within this line of research, several practical
issues were addressed, including the choice of the functional parameter $\La$,
the discovery of backtesting methodologies for Lambda-VaR that extend the usual violation-counting
approach, and the analysis of portfolio risk contributions.

More recent papers such as \cite{Liu}, \cite{LTW24}, \cite{HL2026}, \cite{LS2025}, \cite{Boonen}, 
\cite{XH2025}, \cite {PW2026}
dealt with financial and actuarial applications such as risk-sharing, robust versions under distributional uncertainty, optimal insurance design, numerical methods for Lambda-quantiles and portfolio optimization.  

Another driver behind the increasing interest in Lambda-quantiles is the discovery that in the case of a decreasing $\La$ the properties of $\La$-quantiles are very tractable and constitute a nice generalisation of those of the usual quantiles. Further, in this case $\La$-quantiles
have a nice axiomatic foundation based on peculiar properties of the usual quantiles. More specifically, \cite{BP} provided an axiomatisation based  on the \emph{locality} property. Given the set $\M$ of all univariate cumulative distribution functions and a functional $\rho \colon \M \to \R$, this property requires all $F,G\in\M$ to satisfy
$$
F=G \text{ on } (a,b) \text{ with } \rho(F) \in (a,b)\quad\implies\quad\rho(G)=\rho(F). 
$$ 
In the same context, 
\cite{CMWW} provide an axiomatisation based on the properties of \emph{max-stability} and \emph{min-stability} which require that 
$$
\rho(F \vee G) = \rho(F) \vee \rho(G), \quad
\rho(F \wedge G) = \rho(F) \wedge \rho(G).
$$
Again in the case of a decreasing $\La$, \cite{HWWX} gave inf-sup and sup-inf representations of $\La$-quantiles---defined on random variables via their distribution functions under a reference probability measure---in terms of the usual quantiles:
\[
Q_\La^-(X) = \inf_{x \in \R} \{Q_{\La(x)}^-(X) \vee x \} =
\sup_{x \in \R} \{Q_{\La(x)}^-(X) \wedge x\}.\]
These result in one of the approaches for introducing $\La$-Expected Shortfall in \cite{LambdaES}. 

This paper provides a first systematic study of $\La$-quantiles beyond the
monotone case. Our guiding principle is to avoid as much as possible \emph{ad hoc} restrictions on $\La$, and
to aim to derive results in full generality, challenging the view that
$\La$-quantiles with a non-monotone $\La$ are intractable.

Our main findings can be categorised as follows. First, we refine results already known in the literature concerning weak semicontinuity properties of $\La$-quantiles, that are very important to assess their qualitative robustness and the properties of their empirical estimators. In the literature it is known that in the case of a decreasing $\La$ these properties are similar to those of the usual quantiles; in this paper we refine \cite{BP}, \cite{FMP2014}, and  \cite{BPR2017} by providing almost ``if and only if" conditions for weak upper and lower semicontinuity. 

Second, we discuss the validity of the CxLS property, that in general stands for the convexity of the level sets of a functional with respect to mixtures, that is a well-known necessary condition for elicitability, i.e., the property of being the minimiser of a suitable expected loss. This issue has been discussed in \cite{BPR2017}, here we improve some of the results obtained there. 

Third, we establish a general mixture representation of $\La$-quantiles. The prototypical
result, to the best of our knowledge first communicated in \cite{W2024}, states that for a
decreasing $\La$ the $\La$-quantile of $F$ coincides with a classical quantile of a mixture
$\lambda F+(1-\lambda)G$ with a fixed reference distribution $G$ at a fixed weight
$\lambda$; as \cite{W2024} pointed out, this reveals a remarkable connection with the theorem of \cite{Moulin} characterising
anonymous, efficient and strategy-proof voting schemes as generalised medians. We prove a
substantial generalisation, requiring only that $\La$ has bounded variation and satisfies the
attainment condition: in this case $Q_\La^{\pm}$ can be represented through an \emph{increasing}
parameter evaluated at a fixed mixture, thus reducing the complexity of $\La$ from bounded
variation to monotone. We complement this with a converse closure result, showing that the
corresponding class of $\La$-quantiles is closed under mixing with a fixed distribution at a
fixed weight; as for aggregation, this closure underlines the need of general, non-monotone $\La$. 

Fourth, we develop a theory of the ordinal covariance of $\La$-quantiles. Ordinal
covariance---the requirement that a functional commute with every strictly increasing,
continuous bijection of $\R$---is one of the defining features of classical quantiles and
underlies their axiomatisations in \cite{C09} and \cite{FLW23}. Rather than treating it as an
all-or-nothing property, we regard it as admitting a continuum of degrees, which we measure
through the \emph{ordinal covariance group} of a functional. Our main result identifies this
group, in the case of a $\La$-quantile, with the invariance group of $\La$; and, whenever $\La$
has bounded variation and is right-continuous, with the pointwise stabiliser of the support of
the signed measure associated with $\La$, using a measure-preservation argument based on the
Poincar\'e recurrence theorem. The degree of ordinal covariance is thus encoded in a single
closed set, ranging from the full group of the classical quantiles, corresponding to a constant
$\La$, down to the trivial group of a strictly monotone $\La$.

The paper is organised as follows. Section~\ref{sec:prelim} gives preliminary definitions and discusses first properties such as finiteness, constancy and what we call the attainment property, i.e., the possibility of replacing the infimum by a minimum in the definition of left Lambda-quantiles. 
Section~\ref{sec:identify} addresses the problem of reconstructing $\La$ from the values of Lambda-quantiles on some suitable set of simple distributions. It turns out that under mild assumptions this can always be done with the two-parameter family of shifted Bernoulli distributions. 
Section~\ref{sec:aggr} studies infimum and supremum of $\La$-quantiles,
showing that the aggregation rules of the classical quantiles extend to
a general $\La$, and that the closure of the class of Lambda-quantiles under these operations naturally leads to
the consideration of non-monotone $\La$.
Section~\ref{sec:weak} provides complete characterisations of the weak lower and upper
semicontinuity and of the weak continuity of both $\La$-quantiles, sharpening earlier
sufficient conditions into equivalences.
Section~\ref{sec:CxLS} characterises the convex level set property for both the left and the
right $\La$-quantile, refining the sufficient conditions of \cite{BPR2017}.
Section~\ref{sec:MR} establishes the general mixture representation, reducing any bounded-variation
$\La$ to an increasing one, and proves the corresponding closure
of the class under mixing.
Section~\ref{sec:OC} develops the theory of ordinal covariance, identifying the ordinal
covariance group of a $\La$-quantile with the invariance group of $\La$ and, in the
bounded-variation case, with the pointwise stabiliser of the support of the associated signed
measure.
Finally, Section~\ref{sec:concl} concludes and discusses directions for further research.

\section{Preliminaries and first properties}\label{sec:prelim}

We denote by $\M$ the set of cumulative distribution functions (CDFs) on $\R$ and by $\M_c$ its subset of compactly supported distributions.
Given a function $f\colon 
\R\to\R$, we write
$f(y+)=\lim_{x\downarrow y}f(x)$ and $f(z-)=\lim_{x\uparrow z}f(x)$
for $y\in[-\infty,\infty)$ and $z\in(-\infty,\infty]$, whenever these limits exist.  
We say that $f$ is \emph{right-regular} (respectively, \emph{left-regular}) if it admits right (respectively, left) limits everywhere. We say that $f$ is \emph{right} (respectively, \emph{left}) \emph{lower semicontinuous} if $\liminf_{y \downarrow x} f(y) \geq f(x)$ (respectively, $\liminf_{y \uparrow x} f(y) \geq f(x)$). 
The terms increasing and decreasing are used in the weak sense.
Otherwise, we speak of strictly increasing or decreasing functions. As usual, we set 
$ \inf \varnothing = \infty$. 

Throughout the manuscript, $\La$ denotes a function mapping $\R$ to $[0,1]$, which is used to define $\La$-quantiles as follows. 

\begin{definition}[Lambda-quantiles]
\label{def:La-Q}
Let $\La \colon \R \to [0,1]$ and $F \in \M$. Then 
\begin{equation*}
Q_\La^-(F) :=\inf \left\{ x\in \R\mid F(x)\geq \La
(x)\right\}\text { and }
Q_\La^+(F):=\inf \left\{ x\in \R\mid F(x)>\La (x)\right\}
\end{equation*}  
are respectively called the left and the right $\La$-quantiles of $F$.
\end{definition}

Lambda-quantiles generalise classical quantiles that arise when $\La\equiv\alpha \in [0,1]$.   
By definition, clearly
$
Q_\La^-\le Q_\La^+.
$
Notice that if $\La(a)=0$ for some $a\in\R$, it holds that $Q_\La^-(F) \leq a$ for each $F \in \M$, so the values of $\La$ for $x > a$ are immaterial for the determination of $Q_\La^-$.
This motivates the following.

\begin{definition}[Standard representation]
Let $\La \colon \R \to [0,1]$. Let
\begin{equation}
\label{eq:N-zero}
N_0 := \inf \{ x \in \R \mid \La(x)=0 \},
\end{equation}
with $\inf \varnothing = \infty$. 
We say that $\La$ is {\em in standard representation} if $\La(x)=0$ for all $x > N_0$. 
\end{definition}

The usual quantiles are always finite when $\alpha \in (0,1)$, and it holds that $Q_0^- \equiv -\infty$, $Q_1^+ \equiv \infty$, and $Q_0^+(F)$ and $Q_1^-(F)$ are the minimum and maximum of the support of $F$, i.e., its essential infimum and supremum. 
Finiteness of Lambda-quantiles is characterised by the following lemma. 

\begin{lemma}[Finiteness]
\label{lem:liminf}
Let $\La\colon\R\to[0,1]$. Then,
\begin{equation}
\liminf_{x \to -\infty}\Lambda(x)>0 \iff Q_\La^->-\infty\iff Q_\La^+>-\infty,\label{eq:equiv1}\end{equation}
and if $\La$ is in standard representation,
\begin{equation}\liminf_{x\to \infty}\La(x)<1 \iff Q_\La^+<\infty\iff Q_\La^-<\infty.\label{eq:equiv2}
\end{equation}
\end{lemma}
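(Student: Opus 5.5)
The statement $Q_\La^->-\infty$ is read as ``$Q_\La^-(F)>-\infty$ for every $F\in\M$'', and likewise for the other three conditions. The chains of equivalences are proved by combining the pointwise inequality $Q_\La^-\le Q_\La^+$ with one direct implication and one contrapositive construction using a single suitably chosen $F$.

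\emph{Proof of \eqref{eq:equiv1}.} Since $Q_\La^-\le Q_\La^+$, we have $Q_\La^->-\infty\Rightarrow Q_\La^+>-\infty$. It therefore suffices to show two implications.

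First, $\liminf_{x\to-\infty}\La(x)>0$ implies $Q_\La^->-\infty$. Pick $\delta>0$ and $M\in\R$ with $\La(x)\ge\delta$ for all $x<M$. Fix $F\in\M$. Because $F(x)\to0$ as $x\to-\infty$, there is $M'\le M$ with $F(x)<\delta$ for all $x<M'$. Then no $x<M'$ satisfies $F(x)\ge\La(x)$, so $Q_\La^-(F)\ge M'>-\infty$.

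Second, $Q_\La^+>-\infty$ implies $\liminf_{x\to-\infty}\La(x)>0$; we prove the contrapositive. Assume $\liminf_{x\to-\infty}\La(x)=0$. Choose a sequence $x_n\downarrow-\infty$ with $\La(x_n)<2^{-n}$. Let $F$ be the CDF of a distribution with an atom of mass $2^{-n}$ at each $x_n$, for instance $p_n=2^{-n}$ for $n\ge1$. Then $F(x_n)\ge p_n=2^{-n}>\La(x_n)$ for every $n$. Hence $Q_\La^+(F)\le x_n$ for all $n$, so $Q_\La^+(F)=-\infty$.

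\emph{Proof of \eqref{eq:equiv2}.} Again $Q_\La^+<\infty\Rightarrow Q_\La^-<\infty$ is immediate from $Q_\La^-\le Q_\La^+$.

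First, $\liminf_{x\to\infty}\La(x)<1$ implies $Q_\La^+<\infty$. Take $\eta<1$ and $y_n\uparrow\infty$ with $\La(y_n)\le\eta$. For $F\in\M$ we have $F(y_n)\to1>\eta$, so $F(y_n)>\La(y_n)$ for some $n$. Hence $Q_\La^+(F)\le y_n<\infty$. This direction does not need the standard representation.

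Second, $Q_\La^-<\infty$ implies $\liminf_{x\to\infty}\La(x)<1$; this is the step that uses the standard representation, and I expect it to be the main obstacle. Argue by contrapositive and suppose $\La(x)\to1$ as $x\to\infty$.
\begin{itemize}
\item If $N_0<\infty$, then $\La=0$ on $(N_0,\infty)$ by standard representation, which contradicts $\La\to1$. So $N_0=\infty$, and $\La>0$ everywhere.
\item It remains to construct $F$ with $F(x)<\La(x)$ for all $x$. Since $\La>0$ everywhere and $\La\to1$, the quantity $m_k:=\inf_{x\le k}\La(x)$ could vanish only through behaviour as $x\to-\infty$.
\item \emph{Sub-case $\liminf_{x\to-\infty}\La(x)=0$.} By the previous paragraph there is $F$ with $Q_\La^+(F)=-\infty$, but this does not give $Q_\La^-=\infty$. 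This sub-case needs genuine care and is the delicate point.
\item \emph{Construction.} Choose $F$ so that it grows slowly: let $g(x):=\tfrac12\inf_{y\ge x}\La(y)$. This $g$ is increasing, strictly positive, and satisfies $g\le\La/2$. However, $g$ tends only to $\tfrac12$, so I modify it: use $\inf_{y\ge x}\La(y)$ scaled by a factor tending to $1$, for example $h(x)=\bigl(1-e^{-x}\wedge\tfrac12\bigr)$-type damping combined with $\inf_{y\ge x}\La(y)$. This yields an increasing $G<\La$ with $G(\infty)=1$. Then take $F$ to be a right-continuous CDF below $G$ with $F(-\infty)=0$, for example $F(x)=\min\{G(x),\,e^{x}\}$ after right-continuous regularisation.
\item \emph{Remaining checks.} Positivity of $\inf_{y\ge x}\La(y)$ for each fixed $x$ must be verified. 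It follows because $\La>0$ everywhere and $\La\to1$ at $+\infty$, which handles large $y$. For bounded ranges, however, the infimum of a positive function can still be $0$. In that case I would replace the global infimum by a pointwise construction: a purely atomic $F$ with atoms placed at points where $\La$ is large and masses below the local values of $\La$.
\end{itemize}
If the required strict inequality everywhere still fails, I would reread ``$Q_\La^-<\infty$'' as finiteness on $\M_c$ or a similar restricted class and adjust the argument accordingly.
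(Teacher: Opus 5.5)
\textbf{There is a genuine gap: the last implication is never proved.}

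The first three implications you give are correct. In particular, the atomic CDF with masses $2^{-n}$ at points $x_n\downarrow-\infty$ with $\La(x_n)<2^{-n}$ is exactly the kind of $F$ needed for \eqref{eq:equiv1}.

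The missing step is the contrapositive of $Q_\La^-<\infty\Rightarrow\liminf_{x\to\infty}\La(x)<1$. You never actually produce an $F\in\M$ with $Q_\La^-(F)=\infty$.
\begin{itemize}
\item Your envelope $\inf_{y\ge x}\La(y)$ can vanish at finite $x$ even though $\La>0$ everywhere. You notice this yourself, which contradicts your earlier claim that $m_k$ can only vanish through behaviour at $-\infty$.
\item The fallback ``atoms placed where $\La$ is large with masses below the local values of $\La$'' is not an argument.
\item The closing remark about rereading the statement on $\M_c$ concedes that the step is open.
\item The ``sub-case'' $\liminf_{x\to-\infty}\La(x)=0$ is a red herring. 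Finiteness is a statement about all $F$, so having some $F$ with $Q_\La^+(F)=-\infty$ does not prevent a different $F$ from having $Q_\La^-(F)=\infty$.
\end{itemize}

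\textbf{The missing idea.} You never need any lower bound on $\La$ to the left of some point. Once standard representation forces $N_0=\infty$, i.e.\ $\La>0$ on all of $\R$, any CDF that vanishes on a left half-line $(-\infty,x_1)$ automatically satisfies $F<\La$ there.

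So choose $x_n\uparrow\infty$ with $\La\ge 1-3^{-n}$ on $[x_n,\infty)$; this is possible since $\La\to1$. Let $F$ be the CDF of $\sum_{n\ge1}2^{-n}\delta_{x_n}$. Then:
\begin{itemize}
\item $F\equiv0<\La$ on $(-\infty,x_1)$;
\item $F=1-2^{-n}<1-3^{-n}\le\La$ on $[x_n,x_{n+1})$.
\end{itemize}
Hence $\{F\ge\La\}=\varnothing$ and $Q_\La^-(F)=\infty$. This is precisely the paper's construction. It uses only positivity of $\La$ together with its uniform approach to $1$ at $+\infty$, and no infimum envelope is required.
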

\begin{proof}
Regarding \eqref{eq:equiv1}, $Q_\La^->-\infty$ implies $Q_\La^+>-\infty$. 
Next, assume by contradiction the existence of a sequence $x_n \to -\infty$  such that  $\La(x_n) \to 0$. 
Let $F\in\mathcal M$ be any CDF with the property $F(x_n)>\La(x_n)$ for infinitely many $n$. Then 
$Q_\La^+(F)\le \inf_{n\in\N}x_n=-\infty$, a contradiction. 
Finally, if $\liminf_{x \to -\infty}\La(x)>0$, there exist $\delta>0$ and $x\in\R$ such that $\La(y)\ge \delta$ for all $y\le x$.
As $F(-\infty)=0$ for all $F\in\mathcal M$,
this suffices to guarantee that $Q_\La^->-\infty$. 

For \eqref{eq:equiv2}, note that $Q_\La^+<\infty$ implies $Q_\La^-<\infty$. 
If $\liminf_{x\to\infty}\La(x)=1$, there is a sequence $(x_n)$ increasing to $\infty$ such that $\La(x)\ge 1-3^{-n}$ holds for all $x\ge x_n$. 
Let 
$
\nu:=\sum_{n=1}^\infty \delta_{x_n}/2^n.
$
Using that $\La$ is in standard representation, the associated CDF $F_\nu$ satisfies  $F_\nu\equiv 0<\La$ on $(-\infty,x_1)$ and 
$
F_\nu(x)=1-2^{-n}<\La(x)
$
for
$x\in[x_n,x_{n+1})$.
Hence, $Q_\La^-(F_\nu)=\infty$. 
Last, if $\liminf_{x\to\infty}\La(x)<1$,  $Q_\La^+<\infty$ follows because $F(\infty)=1$, $F\in\mathcal M$. 
\end{proof}

If $\La$ is decreasing, condition~\eqref{eq:equiv1} is equivalent to $\La$ not being identically equal to $0$, while \eqref{eq:equiv2} is equivalent to $\La$ not being identically equal to $1$, which were the conditions for finiteness in \cite{BP}. 
In \cite{BPR2017}, the more restrictive assumption that $\La$ is bounded away from both $0$ and $1$ was required, similarly to the original assumptions in~\cite{FMP2014}.

Recall that a functional $\rho \colon \M \to \R$ is said to satisfy the \emph{constancy} property if $\rho(F_c)=c$ for all $c \in \R$, where $F_c:=\ind_{[c,\infty)}$ is the CDF of the Dirac measure at 
$c$. Lambda-quantiles may fail to have this property if $\La$ takes the values $0$ or $1$; the following lemma gives precise equivalent conditions.

\begin{lemma}
 [Constancy]
Let $\La\colon\R\to[0,1]$, $N_0$ be defined as in \eqref{eq:N-zero}, and $c\in\R$. Then:
\begin{enumerate}[label=\tn{(\alph*)}]
    \item It holds that $Q_\La^-(F_c)=c$ if and only if $\La>0 \text{ on }(-\infty,c)$. 
    In particular, $Q_\La^-$ satisfies the constancy property 
if and only if 
    $N_0=\infty$.
    \item It holds that $Q_\La^+(F_c)=c$ if and only if $\inf\{y\ge c\mid \La(y)<1\}=c$.
    In particular, $Q_\La^+$ satisfies the constancy property if and only if the set 
    $\{\La<1\}$ is dense in $\R$.
\end{enumerate}
\end{lemma}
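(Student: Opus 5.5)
The argument is a direct computation with the step function $F_c=\ind_{[c,\infty)}$. It vanishes on $(-\infty,c)$ and equals $1$ on $[c,\infty)$, so the two defining sets are explicit. For the left quantile, the set is
\[
A^-:=\{x\mid F_c(x)\ge\La(x)\}=\{x<c\mid \La(x)=0\}\cup[c,\infty),
\]
because $1\ge\La$ always holds. For the right quantile, the set is
\[
A^+:=\{x\mid F_c(x)>\La(x)\}=\{x\ge c\mid \La(x)<1\},
\]
because $0>\La(x)$ never holds. Everything then reduces to reading off infima.

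\textbf{Part (a).} We have $Q_\La^-(F_c)=\inf A^-\le c$.
\begin{itemize}
\item If $\La>0$ on $(-\infty,c)$, the first piece of $A^-$ is empty, so $\inf A^-=c$.
\item Conversely, if $\La(x_0)=0$ for some $x_0<c$, then $x_0\in A^-$, so $\inf A^-\le x_0<c$.
\end{itemize}
For the ``in particular'' statement, constancy means that $\La>0$ on $(-\infty,c)$ for every $c\in\R$. This is equivalent to $\La>0$ on all of $\R$, that is, $\{\La=0\}=\varnothing$, that is, $N_0=\infty$. Note that this direction needs the exact emptiness of the zero set, not merely the infimum: $N_0=\infty$ holds precisely when $\La$ never vanishes, since $\inf$ of a nonempty subset of $\R$ is $<\infty$.

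\textbf{Part (b).} Here $Q_\La^+(F_c)=\inf A^+=\inf\{y\ge c\mid \La(y)<1\}$ by the set identity above, so the first claim is immediate. For the ``in particular'' statement we show both directions.
\begin{itemize}
\item Suppose $\{\La<1\}$ is dense. Then for every $c$ and every $\eps>0$ there is $y\in[c,c+\eps)$ with $\La(y)<1$. Together with the trivial bound $\inf A^+\ge c$, this gives $\inf A^+=c$.
\item Conversely, suppose $\{\La<1\}$ is not dense. Then some open interval $(a,b)$ has $\La\equiv1$ on it. Taking $c=(a+b)/2$, every $y\in[c,b)$ satisfies $\La(y)=1$, so $\inf A^+\ge b>c$, or the infimum is $\infty$ if $A^+$ is empty. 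Either way constancy fails.
\end{itemize}

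The main care point is this last step: one must pass from ``the infimum equals $c$ for all $c$'' to density. The point is that $\inf\{y\ge c\mid\La(y)<1\}=c$ requires points of $\{\La<1\}$ accumulating at $c$ from the right (or at $c$ itself), and holding this at every $c$ inside any interval rules out an open interval on which $\La\equiv 1$.
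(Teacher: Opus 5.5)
Your proposal is correct and follows the paper's proof. Like the paper, you compute $\{F_c\ge\La\}=[c,\infty)\cup\{y<c\mid\La(y)=0\}$ and $\{F_c>\La\}=\{y\ge c\mid\La(y)<1\}$ and read off the infima; the extra detail in the density step, where you find an open interval on which $\La\equiv 1$ (which can always be taken bounded) and use its midpoint, only spells out what the paper leaves implicit.
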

\begin{proof}
    Recall the definition $F_c=\ind_{[c,\infty)}$, $c\in\R$. 
    For statement (a), one has 
    \[\{x\in\R\mid F(x)\ge\La(x)\}=[c,\infty)\cup\{y<c\mid\La(y)=0\},\] 
    whose 
    infimum equals $c$ exactly when $\La(y)>0$ for all $y<c$. 
    Hence, the constancy property holds throughout if and only if $\La>0$ on $\R$, which is tantamount to $N_0=\infty$.

    Regarding statement (b), it holds that $\{y\in\R\mid F(y)>\La(y)\}=\{y\ge c\mid\La(y)<1\}$, 
    whose infimum is $c$ if and only if every interval $[c,c+\eps)$ intersects 
    $\{\La<1\}$. This holds for all $c$ if and only if $\{\La<1\}$ is dense.
\end{proof}

A remarkable property of the usual left quantiles which follows from right-continuity of distribution functions 
is that the infimum arising in Definition \ref{def:La-Q} is actually a minimum, i.e., $F(Q_\alpha^-(F))\ge \alpha$.    
We call this the \emph{attainment property}, and give an equivalent condition in the lemma below. 

\begin{lemma}[The attainment property]
\label{lem:attain}
    Let $\La \colon \R \to [0,1]$ be in standard representation. The following are equivalent: 
    \begin{enumerate}[label=\tn{(\alph*)}]
        \item $\La$ is right lower semicontinuous.
        \item For all $F\in\mathcal M$ with $Q_\La^-(F)\in\R$,  
       $
            F\big(Q_\La^-(F)\big)\ge \La\big(Q_\La^-(F)\big).
  $
    \end{enumerate}
\end{lemma}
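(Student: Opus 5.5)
Both directions are short, so I will argue each directly.

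\emph{(a) $\Rightarrow$ (b).} Let $x^*:=Q_\La^-(F)\in\R$ and set $A:=\{x\mid F(x)\ge\La(x)\}$. If $x^*\in A$ there is nothing to prove. Otherwise, since $x^*=\inf A$, there is a sequence $x_n\in A$ with $x_n\downarrow x^*$ and $x_n>x^*$. Right-continuity of $F$ and right lower semicontinuity of $\La$ then give
\[
F(x^*)=\lim_n F(x_n)\ge\liminf_n \La(x_n)\ge\liminf_{y\downarrow x^*}\La(y)\ge \La(x^*),
\]
so $x^*\in A$, which is (b). The standard representation is not needed in this direction.

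\emph{(b) $\Rightarrow$ (a).} I argue by contraposition. Suppose $\La$ fails right lower semicontinuity at some $x_0$. Then there is $\eps>0$ and a sequence $y_n\downarrow x_0$, $y_n>x_0$, with $\La(y_n)\le\La(x_0)-\eps$. In particular $\La(x_0)>0$, so $x_0\le N_0$. Using the standard representation, $x_0<N_0$ would force $\La>0$ on $(-\infty,x_0]$. The case $x_0=N_0$ with $\La(N_0)>0$ is impossible: it would require $\La(y_n)<\La(x_0)$ with $\La(y_n)=0$, which is fine, but then $N_0$ would be the infimum of zeros while not itself a zero. I must handle that case as well; it is still covered by the construction below.

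The plan is to build $F$ with $Q_\La^-(F)=x_0$ and $F(x_0)<\La(x_0)$. I take $F$ to be a CDF such that
\begin{itemize}
\item[] $F=0$ on $(-\infty,x_0)$,
\item[] $F(x_0)=\max(\La(x_0)-\eps,0)$, chosen so that $F(x_0)<\La(x_0)$, and
\item[] $F$ increases continuously on $(x_0,\infty)$ with $F(y_n)\ge\La(y_n)$ for all $n$.
\end{itemize}
This is possible: any right-continuous increasing $F$ with $F(x_0)=\La(x_0)-\eps$ (or $0$) already satisfies $F(y_n)\ge F(x_0)\ge\La(y_n)$. For example, take $F$ to jump by $\max(\La(x_0)-\eps,0)$ at $x_0$ and then put the remaining mass uniformly on $[x_0+1,x_0+2]$.

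With this $F$, every $y_n$ lies in $A$, so $Q_\La^-(F)\le x_0$. To conclude $Q_\La^-(F)=x_0$, I need $F(x)<\La(x)$ for $x<x_0$. Since $F(x)=0$ there, this amounts to $\La>0$ on $(-\infty,x_0)$, which holds by the definition of $N_0$ because $x_0\le N_0$. Thus $Q_\La^-(F)=x_0\in\R$ and $F(x_0)<\La(x_0)$, contradicting (b).

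The main subtlety is making sure the candidate $F$ has its $\La$-quantile exactly at $x_0$. This rests on the positivity of $\La$ to the left of $x_0$, which follows from $\La(x_0)>0$ together with the standard representation and the definition of $N_0$. Finiteness of $Q_\La^-(F)$ is then immediate.
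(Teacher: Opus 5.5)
Your proof is correct and follows the paper's argument: (a)$\Rightarrow$(b) is identical, and for (b)$\Rightarrow$(a) you, like the paper, build a CDF that vanishes on $(-\infty,x_0)$, dominates $\La$ along a sequence decreasing to $x_0$, and has $F(x_0)<\La(x_0)$. The only difference is that your margin $\epsilon$ lets a flat level $\La(x_0)-\epsilon$ replace the paper's staircase at level exactly $\liminf_{y\downarrow x_0}\La(y)$. Do delete the aside claiming that $x_0=N_0$ with $\La(N_0)>0$ is impossible: it is false (take $\La=\ind_{(-\infty,0]}$, which is in standard representation with $N_0=0$ and fails right lower semicontinuity exactly there). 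The claim is harmless, though, since your construction only needs $\La>0$ on $(-\infty,x_0)$, which holds for every $x_0\le N_0$.
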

\begin{proof}
    (a) implies (b): Suppose $F\in\mathcal M$ is such that $q:=Q_\La^-(F)\in\R$.
    The statement is clear if $q\in\{F\ge \La\}$.
    Else, there exists a sequence $(z_n)\subseteq\{F\ge\La\}$ strictly decreasing to $q$. Hence,
$
F(q)=\lim_{n\to\infty}F(z_n)\geq \liminf_{n \to \infty} \La(z_n) \geq \La(q)
$
by assumption (a).

(b) implies (a): By contradiction, suppose we can find $x\in\R$ with $\La(x)>\lambda:=\liminf_{y\downarrow x}\La(y)$. 
In particular, $\La(x)>0$.
     Select a decreasing sequence $(x_n)\subseteq (x,\infty)$ converging to $x$ such that $\La(x_n)$ converges to $\lambda$. 
    Define a CDF $F$ by setting 
$F|_{(-\infty,x)}\equiv 0,\, F(x)=\lambda,\, F|_{[x_{n+1},x_n)}=\sup_{k\ge n+1}\La(x_k)$, $F|_{[x_1,\infty)}\equiv 1$.
By construction, $F(x_n)\ge \La(x_n)$ for all $n\in\N$, and $F(y)=0<\La(y)$ for all $y<x$ because $\La$ is in standard representation. This means that $Q_\La^-(F)=x$, but $F(x)=\lambda<\La(x)$. 
\end{proof}

\section{Identification and reconstruction of \texorpdfstring{$\La$}{Lambda}}\label{sec:identify}

A natural question is whether the function $\La$ is uniquely determined by the values of Lambda-quantiles, and what is an efficient procedure for reconstructing $\La$ from Lambda-quantiles of distributions. 
As discussed in Section \ref{sec:prelim}, all the different $\La$ having the same standard representation produce the same left $\La$-quantiles, so to prevent this obvious lack of identifiability we assume throughout this section that $\La$ is in standard representation, whenever $Q_\La^-$ is involved. 
Another source of lack of identifiability is Proposition 2.6 d) in \cite{BP}, showing that in the case of a decreasing $\La$ all functions $\Lambda'$ satisfying 
$
\La' = \La$ 
on the continuity points of $\La
$
satisfy $Q_{\La'}^{\pm} = Q_\La^{\pm}$. 

Let us consider the following two-parameter family of shifted Bernoulli distributions. 
\begin{definition}[Shifted Bernoulli]
For $x\in\R$ and $p \in [0,1]$ we set
\begin{equation}
\label{eq:Bernoulli}
B_{x,p}:=(1-p)\ind_{[x,x+1)}+\ind_{[x+1,\infty)}
\end{equation}
and 
$
\mathcal M_B:=\{B_{x,p}\mid x\in\R,\,p\in[0,1]\}.
$
\end{definition}

The following lemma shows that $N_0$ is determined by the values of $Q_\La^-$ on one-point distributions. 

\begin{lemma}[Finding $N_0$]
\label{lem:find N0}
Let $N_0$ be defined as in \eqref{eq:N-zero}. If $\La$ is in standard representation, then
\[N_0=\inf\{x\in\R\mid Q_\La^-(B_{x,0})\neq x\}.\]
\end{lemma}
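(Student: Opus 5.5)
Note first that $B_{x,0}=\ind_{[x,\infty)}=F_x$ is the Dirac CDF at $x$. By part (a) of the Constancy lemma, $Q_\La^-(B_{x,0})=x$ holds if and only if $\La>0$ on $(-\infty,x)$. Writing $S:=\{x\in\R\mid Q_\La^-(B_{x,0})\neq x\}$, we therefore have
\[
S=\{x\in\R\mid \exists\, y<x \text{ with } \La(y)=0\},
\]
and the claim reduces to showing $\inf S=N_0$.

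I would establish the two inequalities separately, treating $N_0=\infty$ first. If $N_0=\infty$, then $\La>0$ everywhere, so $S=\varnothing$ and $\inf S=\infty=N_0$.

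Now let $N_0<\infty$. For the inequality $\inf S\ge N_0$, take $x\in S$. There is some $y<x$ with $\La(y)=0$, so $N_0\le y<x$, and hence every element of $S$ exceeds $N_0$. For the inequality $\inf S\le N_0$, this is where standard representation enters: every $x>N_0$ satisfies $\La(z)=0$ for all $z\in(N_0,x)$. Such $z$ exist, so $x\in S$. Thus $S\supseteq(N_0,\infty)$, which gives $\inf S\le N_0$. (If $N_0=-\infty$, the same argument shows $S=\R$ and $\inf S=-\infty$.)

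The argument is essentially routine. The only delicate point is the direction $\inf S\le N_0$. Without standard representation, the zero set of $\La$ could be a sparse set whose infimum is not attained. Standard representation removes this concern by guaranteeing that $\La$ vanishes on a whole interval to the right of $N_0$.
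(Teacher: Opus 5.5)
Your proof is correct and follows the paper's two-inequality argument; the only difference is that you read off $S=\{x\mid \La(y)=0 \text{ for some } y<x\}$ from the Constancy lemma, whereas the paper computes $\{\La\le B_{x,0}\}$ directly. Your closing remark is mistaken, though harmlessly so: standard representation is not needed for $\inf S\le N_0$, since each zero $y$ of $\La$ already puts the whole half-line $(y,\infty)$ into $S$, so $\inf S=N_0$ holds for every $\La$, whether or not the infimum of $\{\La=0\}$ is attained.
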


\begin{proof}
Let $S:=\{x\in\R\mid Q_\La^-(B_{x,0})\neq x\}$, and recall that $\La>0$ on $(-\infty,N_0)$ while by standard representation $\La\equiv 0$ on $(N_0,\infty)$.
If $\La(x)>0$, then $x\le N_0$ and hence $\{\La\le B_{x,0}\}=[x,\infty)$, so that $Q_\La^-(B_{x,0})=x$. Thus $S\subseteq\{\La=0\}$ and $\inf S\ge N_0$.
Conversely, let $x>N_0$. Then
 $Q_\La^-(B_{x,0})=N_0<x$ and $x\in S$; in particular $Q_\La^-(B_{x,0})=-\infty$ for every $x\in\R$ when $\La\equiv 0$. Therefore $\inf S\le N_0$.
\end{proof}

The following proposition shows that under the attainment condition, the values of $\La(x)$ for $x\le N_0$ are determined by the values of left $\La$-quantiles on the two-parameter set of shifted Bernoulli distributions. 

\begin{proposition}[Reconstruction of $\La$ from $Q_\La^-$]
\label{prop:geq}
    If $\La$ is in standard representation, the following are equivalent 
    \begin{enumerate}[label=\tn{(\alph*)}]
        \item $\Lambda$ is right lower semicontinuous 
        \item For all $x\in\R$ with $x\le N_0$, it holds that
        $
      \La(x)=\inf\{1-p\mid Q_\La^-(B_{x,p})=x\}
    $
        \item For all $x\in\R$ with $x\le N_0$, it holds that
      $\La(x)=\inf\{F(x)\mid Q_\La^-(F)=x\}$.    
    \end{enumerate}
\end{proposition}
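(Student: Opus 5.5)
The plan is to show that both infima in (b) and (c) equal the same quantity,
\[
m(x):=\min\Big\{\La(x),\ \liminf_{y\downarrow x}\La(y)\Big\},\qquad x\le N_0 .
\]
Then (b) and (c) are each equivalent to $\La(x)\le\liminf_{y\downarrow x}\La(y)$ for all $x\le N_0$. For $x>N_0$ this inequality holds trivially, since $\La\equiv0$ there by standard representation. So both statements are equivalent to (a), which is right lower semicontinuity on all of $\R$.

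\emph{Step 1: the shifted Bernoulli computation.} Fix $x\le N_0$ and $p\in[0,1]$. Since $x\le N_0$, we have $\La>0$ on $(-\infty,x)$, while $B_{x,p}=0$ there. Hence $\{B_{x,p}\ge\La\}\subseteq[x,\infty)$ and $Q_\La^-(B_{x,p})\ge x$. On $[x,x+1)$ the function $B_{x,p}$ is constant, equal to $1-p$. Therefore $Q_\La^-(B_{x,p})=x$ holds if and only if one of the following occurs:
\begin{itemize}
\item $1-p\ge\La(x)$;
\item there is a sequence $y_n\downarrow x$, $y_n>x$, with $\La(y_n)\le 1-p$.
\end{itemize}
The values on $[x+1,\infty)$ play no role for whether the infimum equals $x$. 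The set of admissible levels $c=1-p$ thus contains every $c\ge\La(x)$ and every $c>\liminf_{y\downarrow x}\La(y)$. It contains no $c<m(x)$. Hence $\inf\{1-p\mid Q_\La^-(B_{x,p})=x\}=m(x)$.

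\emph{Step 2: general $F$.} If $Q_\La^-(F)=x$, then either $x\in\{F\ge\La\}$, giving $F(x)\ge\La(x)$, or there exist $z_n\downarrow x$ with $F(z_n)\ge\La(z_n)$. In the second case, right-continuity of $F$ gives $F(x)\ge\liminf_{y\downarrow x}\La(y)$, exactly as in the proof of Lemma~\ref{lem:attain}. So $F(x)\ge m(x)$ in either case. Since $\mathcal M_B\subseteq\M$ and $B_{x,p}(x)=1-p$, Step 1 gives the reverse inequality. Hence $\inf\{F(x)\mid Q_\La^-(F)=x\}=m(x)$ as well.

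\emph{Step 3: conclusion.} Since $m(x)=\La(x)$ if and only if $\La(x)\le\liminf_{y\downarrow x}\La(y)$, all three statements are equivalent.

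The main obstacle is the boundary bookkeeping in Step 1. At $x=N_0<\infty$ one may have $\La(N_0)>0$ while $\La\equiv0$ to the right, so $m(N_0)=0$; the formula still holds. One must also check that the set in Step 1 is nonempty, which is clear since $p=0$ is always admissible. Finally, one must confirm that the restriction $x\le N_0$ is exactly what rules out points $y<x$ of the set $\{B_{x,p}\ge\La\}$.
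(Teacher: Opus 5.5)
Your proof is correct, and it takes a different route from the paper.

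\textbf{The paper's route.} The paper proves the cycle (a)$\Rightarrow$(b)$\Rightarrow$(c)$\Rightarrow$(a) and invokes Lemma~\ref{lem:attain} twice.
\begin{itemize}
\item In (a)$\Rightarrow$(b), the lemma shows that every admissible $p$ satisfies $1-p\ge\Lambda(x)$.
\item In (c)$\Rightarrow$(a), the lemma is used in its nontrivial direction, with its specially constructed CDF. This passes from ``$F(x)\ge\Lambda(x)$ whenever $Q_\Lambda^-(F)=x$'' back to right lower semicontinuity.
\item (b)$\Rightarrow$(c) is handled separately, by a contradiction argument with a suitable $B_{x,p}$.
\end{itemize}

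\textbf{Your route.} You evaluate both infima in closed form, without any regularity assumption, as $m(x)=\min\{\Lambda(x),\liminf_{y\downarrow x}\Lambda(y)\}$ for $x\le N_0$. All three statements then collapse to the pointwise identity $\Lambda=m$ on $(-\infty,N_0]$; points $x>N_0$ are trivial by standard representation.

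\textbf{What your route buys.}
\begin{itemize}
\item It is self-contained. You only need the elementary sequence argument from Lemma~\ref{lem:attain}, because the Bernoulli computation itself produces an admissible level strictly below $\Lambda(x)$ wherever right lower semicontinuity fails.
\item It gives more than the equivalence. It identifies the output of the reconstruction procedure, for an arbitrary $\Lambda$ in standard representation, as $m$. This $m$ is precisely the largest right lower semicontinuous minorant of $\Lambda$.
\item It directly explains two of the paper's later remarks. For $\Lambda_\eps$ the procedure returns $\Lambda_{1/2}$, since $m(0)=\min\{\eps,\frac12\}=\frac12$. For decreasing $\Lambda$ it returns the right-continuous version $x\mapsto\Lambda(x+)$.
\end{itemize}

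The paper's route is shorter on the page, since Lemma~\ref{lem:attain} is already available, but it yields only the equivalence.
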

\begin{proof}
    (a) implies (b): Let $x\in\R$ be such that we can choose $p\in[0,1]$ with $Q_\La^-(B_{x,p})=x$. Lemma~\ref{lem:attain}
    then implies $1-p=B_{x,p}(x)\ge \La(x)$.
    If $\La(x)>0$, 
    the infimum in (b) is attained by $B_{x,1-\La(x)}=\La(x)\ind_{[x,x+1)}+\ind_{[x+1,\infty)}$. 
If $\La(x)=0$, $\La$ being in standard representation implies that $x=N_0$. 
    In this case $Q_\La^-(B_{N_0,p})=N_0$ for every $p\in[0,1]$, so the infimum reduces to 
    $\inf_{p\in[0,1]}B_{N_0,p}(N_0)=\inf_{p\in[0,1]}(1-p)=0=\La(N_0)$, 
    the value being approached by $B_{N_0,p}$ as $p\uparrow 1$.
    
    (c) implies (a): For all $x\in\R$ with $x>N_0$, there is no $F\in\M$ with $Q_\La^-(F)=x$. For all $x\le N_0$ and $F\in\M$ with $Q_\La^-(F)=x$, statement (c) implies that $F(x)\ge \La(x)$. 
    Statement (a) then follows with Lemma~\ref{lem:attain}.
    
    (b) implies (c): As $\mathcal M_B\subseteq\M$ and $B_{x,p}(x)=1-p$, statement (b) gives 
    \[\inf\{F(x)\mid Q_\La^-(F)=x\}\le\inf\{1-p\mid Q_\La^-(B_{x,p})=x\}=\La(x).\]
    For the reverse inequality, let $F\in\mathcal M$ be such that $Q_\La^-(F)=x\le N_0$ and, 
    towards a contradiction, suppose that $F(x)<\La(x)$.
    Select a sequence $y_n\downarrow x$ such that $\La(y_n)\le F(y_n)$, $n\in\N$. In particular, 
    \[\liminf_{n\to\infty}\La(y_n)\le F(x)<\La(x).\]
    Select $p\in(0,1)$ such that $F(x)<1-p<\La(x)$. 
    By construction, $Q_\La^-(B_{x,p})=x$, but this poses a contradiction to (b).  
\end{proof}

In conclusion, under right lower semicontinuity, Lemma~\ref{lem:find N0} together with Proposition \ref{prop:geq} yields the following procedure for reconstructing $\La$ from the values of $\rho:=Q_\La^-$ on shifted Bernoulli distributions:
\begin{enumerate}[label=(\roman*)]
\item First,  determine $N_0$ as 
$
N_0 = \inf \{x\in \R\mid \rho(B_{x,0}) \ne x\}
$.
\item Second, set $\La(x)=0$ for all $x > N_0$.
\item Third, for each $x \le N_0$ determine $\La(x)$ with Proposition \ref{prop:geq}(b). 
\end{enumerate}
As a consequence,  
within the class of right lower semicontinuous $\La$ in standard 
representation, the two-parameter family $\mathcal M_B$ is identifying: the values of the left $\La$-quantile on $\mathcal M_B$ 
determine $\La$.
Outside this class, the procedure may fail to uniquely identify $\La$, as the following example shows. 

\begin{example}
    For $\eps\in[\frac 1 2,1]$, consider 
    \[\La_\eps(x)=\begin{cases}0.9&x>0.4,\\[-0.6ex]
    x+0.5&0<x\le 0.4,\\[-0.6ex]
    \eps&x=0,\\[-0.6ex]
    1&x<0,\end{cases}
    \]
    a function that is right lower semicontinuous if and only if $\eps=\frac 12$.
    Assume $\eps,\delta\in(\frac 1 2,1)$.
    Whenever $Q^-_{\La_\eps}(F)\neq 0$, then $Q^-_{\La_\eps}(F)=Q^-_{\La_\delta}(F)$.    
    If $Q^-_{\La_\eps}(F)=0$, we distinguish two cases. 
    Either $F(0)\ge \eps$, in which case we also have $F(x)\ge x+0.5$ for all $0<x<\eps - \frac 1 2$, so that $Q^-_{\La_\delta}(F)=0$. 
    Else, if $F(0)<\eps$, there exists a decreasing sequence $(x_n)\subseteq(0,0.4)$ such that $F(x_n)\ge x_n+0.5$. Again, $Q^-_{\La_\delta}(F)=0$.
    In sum, the values of $Q^-_{\La_\eps}$ do not contain any information about the value $\eps$. 
    The boundary case $\eps = \frac 1 2$ can be viewed as the right lower semicontinuous version of each $\La_{\eps}$, and Proposition \ref{prop:geq} applies. 
    Nevertheless, the resulting left $\La$-quantile differs in that $Q^-_{\La_{0.5}}\neq Q^-_{\La_\eps}$ for all $\eps\in(\frac 1 2,1]$. 
    This can be verified by plugging in $G=\frac 12\ind_{[0,1)}+\ind_{[1,\infty)}$ and computing 
$Q^-_{\La_{1/2}}(G)=0$ and $Q^-_{\La_\eps}(G)=1$.
\end{example}

It is instructive to examine in more detail the output of the reconstruction procedure in the case in which $\La$ 
is decreasing but possibly not right lower semicontinuous, so identifiability may fail. Let
$
\La^\ast(x):=\lim_{y \downarrow x} \La(y) 
$ 
be the right-continuous version of $\La$. 
As recalled at the beginning of this section, replacing $\La$ by $\La^\ast$
leaves the values of $Q_\La^-$ unchanged. Since Proposition \ref{prop:geq}(b) depends only on the values of $Q_\La^-$, it returns the same function whether it is applied 
to $\La$ or to $\La^\ast$. Applied to the latter, which is in particular right lower semicontinuous, it returns $\La^\ast$. Hence, in the decreasing case, the procedure
always automatically reconstructs the right-continuous version of $\La$.

Finally, Proposition \ref{prop:geq-2} below shows that a similar reconstruction procedure can be done starting from 
right Lambda-quantiles. The statement is slightly different from Proposition~\ref{prop:geq} since the points not 
arising as possible values of $Q_\La^+$ no longer form the half-line $(N_0,\infty)$ as in the case of $Q_\La^-$, but 
are now the complement of the set $\mathcal R$ below, on which $\La$ must be equal to $1$; this 
is why the reconstruction formula is stated by cases rather than restricted to the set
$x\le N_0$.

\begin{proposition}[Reconstruction of $\La$ from $Q_\La^+$]
\label{prop:geq-2}
Let $\La \colon \R \to [0,1]$ and 
$
\mathcal R:=\{x\in\R\mid Q_\La^+(B_{x,0})=x\}.
$
The following are equivalent:
    \begin{enumerate}[label=\tn{(\alph*)}]
        \item $\La$ is right lower semicontinuous.
        \item For all $x\in\R$, 
        \begin{equation*}
      \La(x)=\begin{cases}\inf\{1-p\mid Q_\La^+(B_{x,p})=x\}&~~\text{if }x\in\mathcal R,\\[-0.6ex]
      1&~~\text{otherwise}.\end{cases}
    \end{equation*}
        \item For all $x\in\R$, \begin{equation*}
      \La(x)=\begin{cases}\inf\{F(x)\mid F\in\mathcal M,\,Q_\La^+(F)=x\}&~~\text{if }x\in\mathcal R,\\[-0.6ex]
      1&~~\text{otherwise}.\end{cases}
    \end{equation*}
    \end{enumerate}
\end{proposition}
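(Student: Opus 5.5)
The plan is to prove (a)$\Rightarrow$(b), (a)$\Rightarrow$(c), and then the two converses (b)$\Rightarrow$(a) and (c)$\Rightarrow$(a) with a single counterexample built from a shifted Bernoulli distribution.

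\emph{Description of $\mathcal R$.} First I describe $\mathcal R$ exactly as in the proof of the constancy lemma. Since $B_{x,0}=\ind_{[x,\infty)}=F_x$, part (b) of that lemma gives $Q_\La^+(B_{x,0})=\inf\{y\ge x\mid \La(y)<1\}$. Hence $x\in\mathcal R$ if and only if every interval $[x,x+\eps)$ meets $\{\La<1\}$. In particular, $x\notin\mathcal R$ forces $\La(x)=1$. So the ``otherwise'' branches of (b) and (c) hold for every $\La$, and only points $x\in\mathcal R$ need attention.

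\emph{Attainment inequality for $Q_\La^+$.} The key auxiliary step is an analogue of Lemma~\ref{lem:attain} for $Q_\La^+$. If $\La$ is right lower semicontinuous and $Q_\La^+(F)=x\in\R$, then $F(x)\ge\La(x)$. There are two cases. Either $x\in\{F>\La\}$, and the inequality is immediate. Or there is a sequence $z_n\downarrow x$ with $F(z_n)>\La(z_n)$, and right-continuity of $F$ together with (a) gives $F(x)=\lim F(z_n)\ge\liminf\La(z_n)\ge\La(x)$. Note that no standard representation is needed here.

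\emph{(a) implies (b) and (c).} Fix $x\in\mathcal R$. By the attainment inequality, both infima are at least $\La(x)$. For the upper bound I use Bernoulli distributions only, which handles (b) and (c) simultaneously.
\begin{itemize}
\item If $\La(x)<1$: for any $p$ with $1-p>\La(x)$ we have $B_{x,p}(x)=1-p>\La(x)$, while $B_{x,p}=0$ on $(-\infty,x)$. Hence $Q_\La^+(B_{x,p})=x$. Letting $1-p\downarrow\La(x)$ shows both infima are at most $\La(x)$.
\item If $\La(x)=1$: $x\in\mathcal R$ means exactly $Q_\La^+(B_{x,0})=x$. This gives the value $1-0=1$, so both infima are at most $1=\La(x)$.
\end{itemize}

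\emph{(b) or (c) implies (a).} I argue by contradiction. Suppose $\La(x)>\lambda:=\liminf_{y\downarrow x}\La(y)$, and take $x_n\downarrow x$ with $\La(x_n)\to\lambda$. Choose $p$ with $\lambda<1-p<\La(x)$. Then $B_{x,p}(x_n)=1-p>\La(x_n)$ for all large $n$, while $B_{x,p}$ vanishes left of $x$ and $B_{x,p}(x)=1-p<\La(x)$. Therefore $Q_\La^+(B_{x,p})=x$. Moreover, $\La(x_n)<1$ shows $x\in\mathcal R$. Consequently both infima in (b) and (c) are at most $1-p<\La(x)$, contradicting either statement.

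The main obstacle I anticipate is choosing the witness distributions so that $Q_\La^+$ lands exactly at $x$. This requires strict inequality $F>\La$ at points accumulating at $x$, which is why I take $1-p$ strictly between the relevant values. The boundary case $\La(x)=1$ is handled precisely through the definition of $\mathcal R$.
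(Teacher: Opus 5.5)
Your proof is correct, and its ingredients are the paper's. The paper also uses the inequality $F(x)\ge\La(x)$ whenever $Q_\La^+(F)=x$ and $\La$ is right lower semicontinuous; it applies it inline, for $F=B_{x,p_n}$, in its proof of (a)$\Rightarrow$(b). It likewise uses Bernoulli witnesses for the upper bound. Your counterexample $B_{x,p}$ with $\liminf_{y\downarrow x}\La(y)<1-p<\La(x)$ is essentially the paper's (c)$\Rightarrow$(a).

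What differs is the logical organisation. The paper runs the cycle (a)$\Rightarrow$(b)$\Rightarrow$(c)$\Rightarrow$(a), and its middle step does not actually use (b). For any $F$ with $Q_\La^+(F)=x$, it takes $y_n\downarrow x$ with $F(y_n)>\La(y_n)$, observes $Q_\La^+(B_{x,1-F(y_n)})=x$, and uses $F(y_n)\to F(x)$. So the Bernoulli infimum and the general infimum agree for \emph{every} $\La$.

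You instead prove (a)$\Rightarrow$(b) and (a)$\Rightarrow$(c) at once, and let a single counterexample refute both (b) and (c). This sidesteps the approximation step. It also establishes upfront, via the constancy lemma, that the ``otherwise'' branches hold for all $\La$. And it replaces the paper's case split on whether the infimum equals $1$ with the more direct split $\La(x)<1$ versus $\La(x)=1$.

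The price is that your route shows restricting to $\mathcal M_B$ loses nothing only when (a) holds. The paper obtains this independently of $\La$, and that fact is the conceptual justification for reconstructing $\La$ from shifted Bernoulli distributions.
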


\begin{proof}
    (a) implies (b): Assume first that $x\in\mathcal R$, i.e., 
$\inf\{1-p\mid Q_\La^+(B_{x,p})=x\}\le 1.$
    If this infimum agrees with 1, for all $p>0$, $Q_\La^+(B_{x,p})\neq x$ must hold. 
    By construction, only $Q_\La^+(B_{x,p})>x$ is possible, which means that 
$\La(x)\ge B_{x,p}(x)=1-p.$
    Hence, $\La(x)=1$ and agrees with the infimum. 
    Else, if the infimum is less than 1, select a sequence $(p_n)\subset(0,1)$ such that $1-p_n$ decreases to said infimum and $Q_\La^+(B_{x,p_n})=x$ for all $n$. 
    For a suitable sequence $(y_k)\subseteq[x,\infty)$ converging to $x$, 
\[\La(x)\le\liminf_{y\downarrow x}\La(y)\le \liminf_{k\to\infty}\La(y_k)\le\liminf_{k\to\infty}B_{x,p_n}(y_k)=1-p_n.\]
    Letting $n\to\infty$ delivers
$\La(x)\le\inf\{1-p\mid Q_\La^+(B_{x,p})=x\}<1.$
    If $1-p$ undercuts said infimum, $Q_\La^+(B_{x,p})>x$ must hold, which means that $\La(x)\ge 1-p$. In total, we obtain 
$\La(x)=\inf\{1-p\mid Q_\La^+(B_{x,p})=x\}$.
    Next, if $x\notin\mathcal R$, then $Q_\La^+(B_{x,0})>x$ must hold. 
    This means that $\La(x)\ge B_{x,0}(x)=1$. 
    
    (b) implies (c): 
    By construction, 
    \[\inf\{1-p\mid Q_\La^+(B_{x,p})=x\}\ge\inf\{F(x)\mid F\in\M,\,Q_\La^+(F)=x\}.\]
    Conversely, suppose $F\in\mathcal M$ satisfies $Q_\La^+(F)=x$; there is a sequence $(y_n)\subseteq[x,\infty)$ converging to $x$ such that $F(y_n)>\La(y_n)$ for all $n\in\N$.
    Hence, for all $n\in\N$, 
$Q_\La^+(B_{x,1-F(y_n)})=x$, which entails
    \[\inf\{1-p\mid Q_\La^+(B_{x,p})=x\}\le\inf_{n\in\N}F(y_n)=F(x).\]

    (c) implies (a): By contradiction, assume that (a) fails, i.e., there exists $x\in\R$ such that 
$
\Lambda(x) > \lambda := \liminf_{y \downarrow x} \Lambda(y).
$
Then $\lambda<\La(x)\le 1$, so there is a sequence $y_n\downarrow x$ with $\La(y_n)<1$; this yields $Q_\La^+(B_{x,0})=x$, that is, $x\in\mathcal R$. 
Choose $p \in (0,1)$ with $\Lambda(x) > 1 - p > \lambda$. 
By construction, $Q_\La^+(B_{x,p}) = x$, while $B_{x,p}(x)=1-p<\Lambda(x)$, contradicting~(c).
\end{proof}

For comparison, for the usual quantiles the parameter $\lambda$ can be uniquely identified by the values on the one-parameter family of Bernoulli random variables $B_{0,p}$, or on any one-parameter family of two-point distributions supported on two given points $a,b \in \R$. The same is true for general distortion risk measures or distortion riskmetrics in the sense of \cite{Distortion1,Distortion2}, where the distortion function can be reconstructed by the values of the functional on the one-parameter family $B_{0,p}$. Lemma \ref{lem:find N0} and Proposition~\ref{prop:geq} together show that as a result of the lack of translation invariance,
$\La$-quantiles are just one degree of freedom more complex; indeed their functional parameter $\La$ can be reconstructed from the values of Lambda-quantiles on shifted Bernoulli distributions. 

\section{Inf- and sup- aggregation}\label{sec:aggr}

In this section, we study aggregation properties of Lambda-quantiles with respect to the operations of infimum and supremum, that often yield again a Lambda-quantile. We begin by recalling the corresponding aggregation properties of the
classical quantiles, that will arise as a special case of our results. Recall that for $F\in\M$, the maps $\alpha\mapsto Q_\alpha^-(F)$ and $\alpha\mapsto Q_\alpha^+(F)$
are increasing on $[0,1]$, the former is left-continuous and the latter
right-continuous, and they satisfy
\[
Q_\alpha^+(F)=\lim_{\beta\downarrow\alpha}Q_\beta^-(F)
\and
Q_\alpha^-(F)=\lim_{\beta\uparrow\alpha}Q_\beta^+(F);
\]
see, e.g., \cite[Appendix~A.3]{FoeSch} or \cite[Proposition~1]{EH13}.
Letting $(\alpha_i)_{i\in\mathcal I}\subseteq[0,1]$, $\underline\alpha:= \inf_i \alpha_i$, $\overline\alpha:= \sup_i \alpha_i$,  the
following aggregation rules hold:
\begin{enumerate}[label=\tn{(\roman*)}]
    \item For an arbitrary index set $\mathcal I$,
$\inf_{i\in\mathcal I}Q_{\alpha_i}^+=Q_{\underline\alpha}^+$
and
$\sup_{i\in\mathcal I}Q_{\alpha_i}^-=Q_{\overline\alpha}^-$.
    \item For a finite $\mathcal I$,
$
    \min_{i\in\mathcal I}Q_{\alpha_i}^-=Q_{\min_{i\in\mathcal I}\alpha_i}^-
$
and
$
    \max_{i\in\mathcal I}Q_{\alpha_i}^-=Q_{\max_{i\in\mathcal I}\alpha_i}^-,
$
    and similarly with $Q^+$ in place of $Q^-$.
\end{enumerate}

Our first result shows that the infimum rule in~(i) and the
minimum rule in~(ii) extend to Lambda-quantiles with a general
functional parameter, and their counterparts are formulae~\eqref{eq:min1} and~\eqref{eq:min2}
below. All the remaining rules do not generally carry over.  

\begin{proposition}[Inf-aggregation]
\label{prop:lattice1}
Let $\La_i\colon\R\to[0,1]$ for $i\in\mathcal I$, where $\mathcal I$ is an arbitrary index set.  
Then
\begin{equation}\label{eq:min1}\inf_{i\in\mathcal I}Q_{\La_i}^+=Q_{\inf_{i\in\mathcal I}\La_i}^+.\end{equation}
If $\mathcal I$ is finite, we also have
\begin{equation}\label{eq:min2}\min_{i\in\mathcal I}Q_{\La_i}^-=Q_{\min_{i\in\mathcal I}\La_i}^-.\end{equation}
\end{proposition}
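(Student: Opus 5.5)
Both identities follow by comparing the sets over which the infima are taken in Definition~\ref{def:La-Q}. Write $\underline\La:=\inf_{i\in\mathcal I}\La_i$, and fix $F\in\M$.

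\emph{Formula \eqref{eq:min1}.} The key observation is the set identity
\[
\{x\in\R\mid F(x)>\underline\La(x)\}=\bigcup_{i\in\mathcal I}\{x\in\R\mid F(x)>\La_i(x)\}.
\]
This holds because $F(x)>\inf_i\La_i(x)$ if and only if $F(x)>\La_i(x)$ for some $i$, by the definition of an infimum and the strictness of the inequality. The infimum of a union equals the infimum of the infima, with the convention $\inf\varnothing=\infty$ covering empty members and an empty union. Hence
\[
Q^+_{\underline\La}(F)=\inf_{i\in\mathcal I}Q^+_{\La_i}(F).
\]
This works for an arbitrary index set, and no regularity of $\La_i$ or $F$ is needed.

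\emph{Formula \eqref{eq:min2}.} Now let $\mathcal I$ be finite and $\underline\La=\min_i\La_i$. The analogous identity
\[
\{x\in\R\mid F(x)\ge\underline\La(x)\}=\bigcup_{i\in\mathcal I}\{x\in\R\mid F(x)\ge\La_i(x)\}
\]
holds because the minimum over a finite set is attained. Thus $F(x)\ge\min_i\La_i(x)$ if and only if $F(x)\ge\La_{i}(x)$ for an index $i$ realising the minimum at $x$. Taking infima gives \eqref{eq:min2} exactly as before.

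The only delicate point is why finiteness is required for the left version. For infinite $\mathcal I$, the infimum $\inf_i\La_i(x)$ need not be attained. One can then have $F(x)=\inf_i\La_i(x)<\La_i(x)$ for every $i$, so the inclusion ``$\subseteq$'' fails and \eqref{eq:min2} can break down. In the finite case this issue does not arise, so I expect no real obstacle beyond stating the set identities carefully.
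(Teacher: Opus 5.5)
Your proof is correct and rests on the same pointwise observation as the paper's: $F(x)>\inf_i\La_i(x)$ holds if and only if $F(x)>\La_i(x)$ for some $i$, and likewise for $F(x)\ge\min_i\La_i(x)$ when $\mathcal I$ is finite. The paper proves the two inequalities separately and handles the value $-\infty$ with a sequence argument (plus a pigeonhole step in the finite left case). Your exact set identity combined with $\inf\bigcup_i A_i=\inf_i\inf A_i$ covers all cases, including $\pm\infty$, in one step.
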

\begin{proof}
Set $\La:=\inf_{i\in\mathcal I}\La_i$. As $\La\le \La_i$ holds for all $i\in\mathcal I$, we have
$\{F>\La_i\}\subseteq\{F>\La\}$ for all $F\in\mathcal M$, meaning that also $Q_\La^+\le Q_{\La_i}^+$. This shows ``$\ge$'' in equation~\eqref{eq:min1}. 
For the converse inequality, assume first that $Q_\La^+(F)=-\infty$. This means that there is a sequence $(x_n)\subseteq\R$ diverging to $-\infty$ such that $F(x_n)>\La(x_n)$. In particular, for all $n\in\N$ we find $i_n\in\mathcal I$ such that $F(x_n)>\La_{i_n}(x_n)$, meaning that $Q_{\La_{i_n}}^+(F)\le x_n$. In total, 
$
\inf_{i\in\mathcal I}Q_{\La_i}^+(F)\le\inf_{n\in\N}Q_{\La_{i_n}}^+(F)\le\inf_{n\in\N}x_n=-\infty$.
If $\inf_{i\in\mathcal I}Q_{\La_i}^+(F)>-\infty$,
we have for all $x<\inf_{i\in\mathcal I}Q_{\La_i}^+(F)$ and all $i\in\mathcal I$ that 
$
F(x)\le \La_i(x)
$.
Hence, 
$F(x)\le \La(x)$,
meaning that $Q_\La^+(F)\ge x$. 
This suffices to prove ``$\le$'' in \eqref{eq:min1}. 
Now, for \eqref{eq:min2}, the inequality ``$\ge$'' is shown as before. 
For the converse inequality, 
assume first that $Q_\La^-(F)=-\infty$. This means that there is a sequence $(x_n)\subseteq\R$ diverging to $-\infty$ such that $F(x_n)\ge \La(x_n)$. In particular, for all $n\in\N$ we find $i_n\in\mathcal I$ such that $F(x_n)\ge \La_{i_n}(x_n)$. 
As $\mathcal I$ is finite, an index $j\in\mathcal I$ exists such that $F(x_n)\ge \La_j(x_n)$ holds for infinitely many $n$. 
Consequently, $Q_{\La_j}^-(F)=-\infty$. 
If $\min_{i\in\mathcal I}Q_{\La_i}^-(F)>-\infty$, we have for all $x<\min_{i\in\mathcal I}Q_{\La_i}^-(F)$ and all $i\in\mathcal I$ that 
$F(x)<\La_i(x)$.
As $\mathcal I$ is finite, also 
$F(x)< \La(x)$,
meaning that $Q_\La^-(F)\ge x$. 
This suffices to prove ``$\le$'' in \eqref{eq:min2}. 
\end{proof}

In view of formula~\eqref{eq:min2}, one might well hope for the validity of 
\begin{equation}
\max\{Q_{\La_1}^-,Q_{\La_2}^-\}=Q_{\max\{\La_1,\La_2\}}^-.
\end{equation}

The next example shows that max-aggregation does not hold in general.  

\begin{example}
\label{ex:max}
    Suppose $F$ is the CDF of a uniform distribution over $[0,1]$ and let $(x_n)\subseteq(\frac 1 4,\frac 34)$ be a strictly decreasing sequence with limit $\frac 1 4$. Additionally, set $x_0=\frac 34$. 
    Now, define 
    \[\La_1=\tfrac 12\ind_{(-\infty,\frac 1 4]}+\sum_{i=0}^\infty x_{2i}\ind_{(x_{2i+2},x_{2i}]}+\tfrac 3 4\ind_{(\frac 34,\infty)},\]
    and
    \[\La_2=\tfrac 12\ind_{(-\infty,\frac 1 4]}+\sum_{i=1}^\infty x_{2i-1}\ind_{(x_{2i+1},x_{2i-1}]}+\tfrac 3 4\ind_{(x_1,\infty)}.\]
    The sequence $(x_{2i})$ is contained in $\{F\ge \La_1\}$ and the sequence $(x_{2i-1})$ is contained in $\{F\ge \La_2\}$, resulting in 
$Q_{\La_1}^-(F)=Q_{\La_2}^-(F)=\frac 14$.
    Nevertheless, 
$Q_{\max\{\La_1,\La_2\}}^-(F)=\frac 3 4.
$ 
\end{example}
Another important side of Proposition~\ref{prop:lattice1} 
is that it supports our decision to study $\La$-quantiles for general functions $\La$.
Indeed, pointwise minima over a finite family of monotone functions need not be monotone. 
Hence, if decreasing and increasing $\Lambda$ are viewed as natural, incorporating non-monotone $\Lambda$ becomes necessary if the class of $\La$-quantiles is to be closed under minimum and maximum aggregation, paralleling the case of classical quantiles.

\begin{remark}
It is interesting to compare the aggregation studied in Proposition~\ref{prop:lattice1} 
with the results of \cite{HL2026}
and \cite{LS2025}, that aggregate over probabilistic models keeping $\La$ fixed.  
In \cite{HL2026} the objects of interest are
the worst and best $\La$-quantiles over a set of distributions $\mathcal N \subseteq \M$, and their main result shows
that the extremal values are again $\La$-quantiles, evaluated at the
extremal distributions $F_{\mathcal N}^-=\inf_{F\in\mathcal N}F$ and
$F_{\mathcal N}^+=\sup_{F\in\mathcal N}F$. 
A second, complementary form of aggregation over the probabilistic models is
carried out in \cite{LS2025}, where the ambiguity resides in a family of
probability measures rather than in the induced distributions. There the
worst-case $\La$-quantile over such a family is shown, for monotone $\La$, to be
itself a $\La$-quantile computed with respect to a capacity, that is a
non-additive set function. 
\end{remark}

\section{Weak continuity and semicontinuity}\label{sec:weak}

Recall that a sequence of CDFs $(F_n)\subseteq \M$ converges weakly to $F \in \M$ if $F_n(x) \to F(x)$ as $n\to\infty$ at all continuity points of $F$. A functional $\rho \colon \M \to [-\infty,\infty]$ is:
\begin{enumerate}[label=(\alph*)]
    \item weakly upper semicontinuous at $F \in \M$ if $\limsup_{n \to \infty} \rho(F_n) \leq \rho(F)$ whenever $F_n$ converges to $F$ weakly
    \item weakly lower semicontinuous at $F\in \M$ if $-\rho$ is weakly upper semicontinuous at $F$
    \item weakly continuous at $F\in\mathcal M$ if it is both weakly upper and lower semicontinuous at $F$. 
\end{enumerate}
Likewise, $\rho$ is weakly upper or lower semicontinuous if that property holds at all $F\in\M$. 

The weak semicontinuity properties of classical quantiles are well known: the left quantile is weakly lower semicontinuous everywhere; the right quantile is weakly upper semicontinuous everywhere; and both are weakly continuous at those $F \in \M$ where they coincide. 
For decreasing $\La$, \cite{BP} showed that $\La$-quantiles share precisely the same weak semicontinuity properties of the usual quantiles. 
Further, if $\La$ is strictly decreasing, the left and right $\La$-quantiles always coincide
and are weakly continuous everywhere, making them a remarkable example of a  functional that is weakly continuous throughout $\M$.
\cite{FMP2014}
have shown that if $\La$ is right-continuous, then $Q_\La^+$ is weakly
upper semicontinuous. Notice however that the definition of $Q_\Lambda^+$ in \cite{FMP2014} is based on a supremum and coincides with ours only in the case of a decreasing $\La$. In their Proposition 5, \cite{BPR2017} proved a weak continuity result for continuous $\La$ on a suitably restricted class of CDFs. 
A useful tool to establish these properties is Proposition 2.5 in \cite{FMP2014}, showing that
weak lower semicontinuity of any monotone $\rho \colon \M \to \R$ is equivalent to continuity from above defined by 
\begin{center}$F_n$ weakly convergent to $F$ and $(F_n)$ pointwise decreasing$\quad\implies\quad
\lim_{n\to\infty} \rho(F_n) =\rho(F),
$\end{center}
and similarly weak upper semicontinuity is equivalent to continuity from below, defined analogously.
We will repeatedly use these results, noting that they also hold for $[-\infty,\infty]$-valued functionals. 

Our aim is to provide characterisations of weak semicontinuity properties of left and right $\Lambda$-quantiles in terms of the properties of $\La$. 
The subsequent Theorem~\ref{thm:right-usc} focuses on right $\La$-quantiles.
It shows that weak upper semicontinuity holds under much more general assumptions than previously recognised in the literature. 

\begin{theorem}[Weak upper semicontinuity of $Q_\La^+$]
\label{thm:right-usc}
Let $\La\colon\R\to[0,1]$. 

\begin{enumerate}[label=\tn{(\alph*)}]
    \item $Q_\La^+$ is weakly upper semicontinuous if and only if
\begin{equation}\label{cond:usc} \La(x)\ge \liminf_{y\downarrow x}\La(y), \quad \text{ for all } x \in \R. \end{equation}
 
 \item Suppose $\La$ is additionally right lsc and $Q_\La^+$ is weakly usc. 
 Then $Q_\La^+$ is weakly continuous if and only if 
        there exists a threshold $a\in[-\infty,\infty]$ such that 
        \begin{equation}\label{eq:shape-La}
\La\equiv 1\ \text{on } (-\infty,a) \text{ and is strictly decreasing on } [a,\infty).
\end{equation}
\end{enumerate}
\end{theorem}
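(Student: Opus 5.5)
The plan is to argue directly with sequences of CDFs rather than through continuity from above/below. The one tool used throughout is the following elementary fact. If $F_n\to F$ weakly and $s<t$ with $s$ a continuity point of $F$, then
\[\liminf_n F_n(t)\ge \lim_n F_n(s)=F(s).\]
Symmetrically, if $t$ is a continuity point of $F$, then $\limsup_n F_n(x)\le F(t)$ for every $x<t$. Since continuity points of $F$ are dense, we can always interpolate such an $s$ or $t$ between any two reals.

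\emph{Sufficiency in (a).} Let $F_n\to F$ weakly, $q:=Q_\La^+(F)<\infty$, and fix $z>q$. Pick $y\in(q,z)$ with $F(y)>\La(y)$. Condition~\eqref{cond:usc} gives $\liminf_{t\downarrow y}\La(t)\le\La(y)<F(y)$. Hence there is $t\in(y,z)$ with $\La(t)<F(y)$. Now choose a continuity point $s\in(y,t)$ of $F$. Then
\[\liminf_n F_n(t)\ge F(s)\ge F(y)>\La(t),\]
so $F_n(t)>\La(t)$ eventually, i.e.\ $Q_\La^+(F_n)\le t<z$ for large $n$. Letting $z\downarrow q$ gives $\limsup_n Q_\La^+(F_n)\le q$; the case $q=-\infty$ is identical.

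\emph{Necessity in (a).} Suppose $\La(x)<\lambda:=\liminf_{y\downarrow x}\La(y)$, and choose $p$ with $\La(x)<1-p<\lambda$. Fix $\delta>0$ such that $\La>1-p$ on $(x,x+\delta)$. Then $Q_\La^+(B_{x,p})\le x$, because $B_{x,p}(x)=1-p>\La(x)$. On the other hand, $B_{x+1/n,p}\to B_{x,p}$ weakly, and for $1/n<\delta$ we have $B_{x+1/n,p}\le\La$ on $(-\infty,x+\delta)$: the CDF is $0$ below $x+1/n$ and equals $1-p<\La$ on $[x+1/n,x+\delta)$. Hence $Q_\La^+(B_{x+1/n,p})\ge x+\delta$, and upper semicontinuity fails at $B_{x,p}$.

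\emph{(b), sufficiency of \eqref{eq:shape-La}.} By (a) it remains to show weak lower semicontinuity. Let $q=Q_\La^+(F)>-\infty$ and $z<q$; we show $F_n\le\La$ on $(-\infty,z]$ eventually, which gives $\liminf_n Q_\La^+(F_n)\ge z$. On $(-\infty,a)$ there is nothing to check, since $\La\equiv1$ there. If $z\ge a$, take a continuity point $s\in(z,q)$ of $F$ with $s>a$. For $x\in[a,z]$ we have
\[F_n(x)\le F_n(s)\to F(s)\le\La(s)<\La(z)\le\La(x),\]
using $F\le\La$ below $q$ and strict monotonicity. The strict gap $\La(z)-\La(s)>0$ is uniform in $x$, so $F_n<\La$ on $[a,z]$ for all large $n$.

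\emph{(b), necessity.} Set $a:=\inf\{\La<1\}$. Right lower semicontinuity, together with \eqref{cond:usc} and $\La\le1$, makes $\La$ equal to $1$ on $(-\infty,a)$. So it remains to exclude $a\le x<y$ with $\La(x)\le\La(y)<1$, and this is where I expect the main difficulty. The idea is to build $F$ with a flat piece $F\equiv c:=\La(y)$ on $[x,y]$ and $F$ small before $x$, plus perturbations $F_n$ obtained by lowering the plateau to $c-1/n$. One then compares two regimes. For $F$ itself, $F(x)\ge\La(x)$ and points near $x$ (resp.\ near $y$) are candidates for $\{F>\La\}$. For $F_n$, right lower semicontinuity of $\La$ at $x$ should push $Q_\La^+(F_n)$ up to at least $y$, producing a jump in the limit.

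The delicate points are the behaviour of $\La$ on $(x,y)$ and the choice of the minimal offending pair. I would first reduce to this case by taking $x$ to be the infimum of points $x'$ for which some $y>x'$ has $\La(x')\le\La(y)$. If the plateau construction breaks down, I would fall back on shifted Bernoulli perturbations $B_{x\pm1/n,\,1-c}$, exactly as in the necessity argument for (a).
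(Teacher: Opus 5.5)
Your proofs of (a) and of the sufficiency half of (b) are correct and follow the paper's route. For necessity in (a) you shift a step to the right instead of using the paper's two-step CDFs; shrink $\delta\le 1$ so that the jump to $1$ at $x+1+\frac1n$ stays outside $[x,x+\delta)$. Your lower-semicontinuity argument in (b) works for every $a\in[-\infty,\infty]$ at once. The gap is the necessity half of (b). You only sketch it, and the sketch cannot succeed.

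\textbf{The mechanism is wrong.} Lowering the plateau gives $F_n\le F$ pointwise. Hence $\{F_n>\La\}\subseteq\{F>\La\}$ and $Q_\La^+(F_n)\ge Q_\La^+(F)$ for all $n$. Along such a sequence the only possible discontinuity is $\limsup_n Q_\La^+(F_n)>Q_\La^+(F)$, i.e.\ a failure of \emph{upper} semicontinuity. But that is a standing hypothesis of (b), and by your own part (a) it follows from \eqref{cond:usc}. So ``pushing $Q_\La^+(F_n)$ up to $y$ while $Q_\La^+(F)$ stays near $x$'' can never happen. Concretely, if $\La(x)<c=\La(y)$, then $F_n(x)=c-\frac1n>\La(x)$ for large $n$, so $Q_\La^+(F_n)\le x$. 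Your fallback ``exactly as in (a)'' has the same defect, since the shifted Bernoulli sequence there also lies below its limit.

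What must be broken is \emph{lower} semicontinuity: the limit has to stay below $\La$ past $y$ and be approximated from \emph{above} by CDFs that cross $\La$ early. The paper does this with $m:=\inf_{[x,y]}\La$.
\begin{itemize}
\item If $m<\La(y)$, right lower semicontinuity at $y$ (this is where that hypothesis enters) gives $\eps>0$ with $M:=\inf_{[y,y+\eps]}\La>m$. Take $z_n\in[x,y]$ with $z_n\to z$, $\La(z_n)\downarrow m$, and $\delta_n\downarrow 0$ small. Then $F_n:=(\La(z_n)+\delta_n)\ind_{[z_n,y)}+M\ind_{[y,y+\eps)}+\ind_{[y+\eps,\infty)}$ satisfies $Q_\La^+(F_n)=z_n\to z\le y$. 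Its weak limit $F:=m\ind_{[z,y)}+M\ind_{[y,y+\eps)}+\ind_{[y+\eps,\infty)}$ lies below $\La$ on $(-\infty,y+\eps)$, so $Q_\La^+(F)\ge y+\eps$.
\item If instead $m=\La(x)=\La(y)<1$, the plateau $\La(x)\ind_{[x,y)}+\ind_{[y,\infty)}$ has right quantile $\ge y$. It is the weak limit of the plateaux at height $\La(x)+\frac1n(1-\La(x))$, whose right quantile is $x$.
\end{itemize}

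\textbf{Your reduction is too weak.} Excluding pairs $a\le x<y$ with $\La(x)\le\La(y)<1$ does not yield \eqref{eq:shape-La}. Take $\La(t)=1-\frac t2$ for $t\in[0,1)$ and $\La(t)=1$ otherwise. It is right-continuous, so it satisfies the hypotheses of (b), and $a=0$. No such pair exists, yet $\La(1)=1>\La(\frac12)$. Indeed $Q_\La^+$ is not weakly continuous here: $F_n:=(\frac12+\frac1n)\ind_{[0,2)}+\ind_{[2,\infty)}$ has $Q_\La^+(F_n)=1-\frac2n$, while its weak limit has $Q_\La^+=\infty$.

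The pairs you must exclude are $x<y$ with $\La(x)<1$ and $\La(x)\le\La(y)$, where $\La(y)=1$ is allowed. The two constructions above handle all of them, since they only need $m<\La(y)$ or $m=\La(x)=\La(y)<1$. With that, $a:=\inf\{\La<1\}$ gives $\La<1$ on $(a,\infty)$ and strict decrease on $[a,\infty)$.
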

\begin{proof}
Regarding statement (a), suppose that \eqref{cond:usc} does not hold, i.e., we find $x_0\in\R$ such that $\lambda:=\liminf_{y\downarrow x_0}\La(y)$ satisfies $d:=\lambda-\La(x_0)>0$. 
    Choose $z>0$ such that $\inf_{x_0<y\le x}\La(y)>\lambda-\tfrac d 2=\La(x_0)+\frac d 2$ holds for all $x\in (x_0,x_0+z]$.
    For $n\in\N$, define 
    \[F_n:=\La(x_0)\ind_{[x_0,x_0+2^{-n}z)}+(\La(x_0)+\tfrac d2)\ind_{[x_0+2^{-n}z,x_0+z)}+\ind_{[x_0+z,\infty)}.\]
    By construction, $\{F_n>\La\}\subseteq[x_0+z,\infty)$, which means that $Q_\La^+(F_n)\ge x_0+z$ for all $n\in\N$.
    Nevertheless, $F_n$ converges weakly to  $F:=\big(\La(x_0)+\tfrac d 2\big)\ind_{[x_0,x_0+z)}+\ind_{[x_0+z,\infty)}$,
    and $Q_\La^+(F)=x_0$.
    This observation excludes upper semicontinuity. 

    Conversely, suppose \eqref{cond:usc} holds and let $(F_n)\subseteq\M$ be a sequence weakly convergent to $F$. 
    If $Q_\La^+(F)=\infty$, there is nothing to show. 
    Else, let $x\in\R$ be such that $F(x)>\La(x)$. 
    By condition \eqref{cond:usc}, 
    $\La(x)\ge \lim_{y\downarrow x}\La(y)$, i.e., $F(x)>\inf_{x<s<y}\La(s)$ holds for all $y>x$ close enough to $x$.
    For all such $y$ fix $s_y\in(x,y)$ such that $F(x)>\La(s_y)$. 
    For all continuity points $c\in (x,s_y)$, 
    \[\liminf_{n\to\infty}F_n(s_y)\ge \lim_{n\to\infty}F_n(c)\ge F(x)>\Lambda(s_y),\]
    meaning that $\limsup_{n\to\infty}Q_\La^+(F_n)\le s_y$. 
    Finally, sending $y\downarrow x$ delivers 
    $\limsup_{n\to\infty}Q_\La^+(F_n)\le x$.
    It remains to choose a sequence of $x$ that converges to $Q_\La^+(F)$ in the limit to verify
    $\limsup_{n\to\infty}Q_\La^+(F_n)\le Q_\La^+(F)$.
    This is the upper semicontinuity of $Q_\La^+$.

    For statement (b), assume first that $\La$ is as described by \eqref{eq:shape-La}. 
    If $\La$ is strictly decreasing, then $Q_\La^+$ is finite and known to be weakly continuous. Thus, we only need to consider the case where $a>-\infty$. 
    If $a=\infty$, then $Q_\La^+\equiv\infty$ and thus weakly continuous. 
    Else, if $a<\infty$, we know that $Q_\La^+(F)\ge a$ for all $F\in\M$. 
    Also, in order to verify continuity, we only need to show lower semicontinuity. 
    Let $(F_n)\subseteq\M$ be a sequence weakly convergent to $F$. 
    If $Q_\La^+(F)=a$, there is nothing to show. 
    If $q:=Q_\La^+(F)>a$, $F(x)\le \La(x)$ for all $x\in[a,q)$. Choosing a continuity point $y$ of $F$ to the left of $x$, we have 
    $\lim_{n\to\infty}F_n(y)=F(y)\le F(x)\le \La(x)<\La(y)$,
    which means that $Q_\La^+(F_n)\ge y$ for all $n$ large enough. Letting $y\uparrow x$ and $x\uparrow q$, we find 
    $\liminf_{n\to\infty}Q_\La^+(F_n)\ge q$.
    
    Now suppose that $Q_\La^+$ is weakly continuous.   
    Towards a contradiction, assume the existence of $x,y\in\R$ with $x<y$ such that $\La(x)<1$ and $\La(x)\le \La(y)$. We distinguish between the following cases. 

    \textsc{Case 1:} $\La(x)=\La(y)$. 
    In this case, the sequence of CDFs 
    \[F_n:=\big[\La(x)+\tfrac 1n(1-\La(x))\big]\ind_{[x,y)}+\ind_{[y,\infty)},\quad n\in\N,\]
    satisfies by construction that $Q_\La^+(F_n)=x$ and converges weakly to 
    $F:=\La(x)\ind_{[x,y)}+\ind_{[y,\infty)}$.
    As $\La(x)\le\liminf_{z\downarrow x}\La(z)$ by right lower semicontinuity, $Q_\La^+(F)>x$ must hold, contradicting weak continuity. 
    
    \textsc{Case 2:} $\La(x)<\La(y)$.
    Choose a decreasing sequence $(z_n)\subseteq[x,y)$ converging to some $z$ such that $\La(z_n)$ decreases to $m:=\inf_{[x,y]}\La$ and $\La(z_n)<1$ for all $n\in\N$.
    Also, choose a null sequence $(\delta_n)\subseteq(0,1)$ such that $\La(z_n)+\delta_n<1$. 
    The additional assumption of right lower semicontinuity implies 
    \[m\le \La(x)<\La(y)=\liminf_{k\downarrow y}\La(k).\]
    Hence, there exists $\eps>0$ such that $M:=\inf_{[y,y+\eps]}\La>m$. 
    For $n\in\N$ large enough,
    \[F_n:=(\La(z_n)+\delta_n)\ind_{[z_n,y)}+M\ind_{[y,y+\eps)}+\ind_{[y+\eps,\infty)}\]
    is a CDF that 
    satisfies $Q_\La^+(F_n)=z_n$ by construction. Moreover, $F_n$ converges weakly to 
    \[F:=m\ind_{[z,y)}+M\ind_{[y,y+\eps)}+\ind_{[y+\eps,\infty)}.\]  
    By construction, $F(k)\le \La(k)$ holds for all $k<y+\eps$, meaning that 
    \[Q_\La^+(F)\ge y+\eps>z=\lim_{n\to\infty}Q_\La^+(F_n),\]
    contradicting weak continuity. Hence, also this case must be excluded.
Consequently, weak continuity implies for all $x<y$ that either $\La(x)>\La(y)$ or $\La(x)=\La(y)=1$. 
\end{proof}

Condition~\eqref{cond:usc} is very general, since it requires neither
monotonicity nor any right- or left-regularity of $\La$. It expresses the fact
that $\La$ has \emph{no upward jumps from the right}, and is automatically
satisfied whenever $\La$ is constant, decreasing, or right-continuous; we thus
recover the sufficient conditions known in the literature, substantially
generalising them to an equivalence. 

We now turn to weak semicontinuity properties of left $\Lambda$-quantiles, which always hold in the case of the usual quantiles or of a decreasing $\Lambda$. 
The next theorem provides a full characterisation of weak continuity and weak lower semicontinuity of left $\Lambda$-quantiles under right lower semicontinuity. 

\begin{theorem}[Weak lower semicontinuity of $Q_\La^-$]
\label{thm:left-lsc}
Let $\La \colon \R \to [0,1]$ be right lower semicontinuous and in standard representation, with $N_0$ defined by \eqref{eq:N-zero}. Then:
\begin{enumerate}[label=\tn{(\alph*)}]
    \item $Q_\La^-$ is weakly lsc if and only if $\La$ is lower semicontinuous
    \item If $Q_\La^-$ is weakly lsc, then it is weakly continuous if and only if 
    \begin{equation}\label{eq:shape-La 2}
\La \text{ is strictly decreasing on } (-\infty,N_0)
\text{ and } \La \equiv 0 \text{ on } [N_0,\infty),
\end{equation}

\end{enumerate}
\end{theorem}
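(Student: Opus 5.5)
The plan is to prove sufficiency and necessity separately in each part. Sufficiency will go through the attainment property (Lemma~\ref{lem:attain}), which is available because $\La$ is right lower semicontinuous and in standard representation. Necessity will use explicit two- or three-step CDFs, in the spirit of the proof of Theorem~\ref{thm:right-usc}.

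\emph{(a), sufficiency.} Assume $\La$ is lsc, and let $F_n\to F$ weakly with $q:=Q_\La^-(F)$. Suppose, towards a contradiction, that along a subsequence $x_n:=Q_\La^-(F_n)\to x<q$.
\begin{itemize}
\item If $x$ is finite, Lemma~\ref{lem:attain} gives $F_n(x_n)\ge\La(x_n)$. For any continuity point $c\in(x,q)$ of $F$ we have $x_n<c$ for large $n$, so $F_n(c)\ge F_n(x_n)\ge\La(x_n)$. Passing to the limit and using lower semicontinuity at $x$ gives $F(c)\ge\liminf_n\La(x_n)\ge\La(x)$. Letting $c\downarrow x$ and using right-continuity of $F$ yields $F(x)\ge\La(x)$, hence $q\le x$, a contradiction.
\item If $x=-\infty$, the same argument shows $\La(x_n)\le F_n(x_n)\to0$ by tightness. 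This is the delicate point. For instance, $\La(x)=e^x\wedge 1$ with $F_n=\tfrac1n\ind_{[-n,0)}+\ind_{[0,\infty)}\to\ind_{[0,\infty)}$ seems to break lower semicontinuity even though $\La$ is continuous. I therefore expect to need $\liminf_{x\to-\infty}\La(x)>0$, i.e.\ lsc understood at $-\infty$, or equivalently finiteness as in Lemma~\ref{lem:liminf}. I would state and check this convention first; I consider this the main obstacle in~(a).
\end{itemize}

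\emph{(a), necessity.} Suppose $\La$ fails to be lsc at some $x$. Right lower semicontinuity then forces a left failure: there are $y_k\uparrow x$ with $\La(y_k)\to\lambda<\La(x)$. In particular $\La(x)>0$, so $x\le N_0$ and $\La>0$ on $(-\infty,x)$. Pick $\delta>0$ with $\lambda+\delta<\La(x)$. By right lower semicontinuity, $\La>\lambda+\delta$ on some $[x,x+\eta)$. Set
\[
F_n:=(\lambda+\delta)\ind_{[y_n,x+\eta)}+\ind_{[x+\eta,\infty)},
\qquad
F:=(\lambda+\delta)\ind_{[x,x+\eta)}+\ind_{[x+\eta,\infty)} .
\]
Then $Q_\La^-(F_n)\le y_n$ for large $n$, while $F_n\to F$ weakly and $Q_\La^-(F)\ge x+\eta>x$. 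This contradicts weak lower semicontinuity.

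\emph{(b), sufficiency.} Under \eqref{eq:shape-La 2}, $\La$ is decreasing. Strict monotonicity on $(-\infty,N_0)$ gives $Q_\La^-=Q_\La^+$, since $\{F\ge\La\}$ and $\{F>\La\}$ share the same infimum. Theorem~\ref{thm:right-usc}(a) applies because a decreasing $\La$ satisfies \eqref{cond:usc}, so $Q_\La^-=Q_\La^+$ is weakly usc. Combined with lsc, $Q_\La^-$ is weakly continuous; alternatively one can cite \cite{BP}. Note also that $\La(N_0)=0$ is automatic when $N_0<\infty$, by right lower semicontinuity.

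\emph{(b), necessity.} Suppose there are $x<y<N_0$ with $\La(x)\le\La(y)$. I would mirror Case~2 of Theorem~\ref{thm:right-usc}(b) with the inequalities reversed.
\begin{itemize}
\item Take $m:=\inf_{[x,y]}\La$ (which is positive), realised along $z_n\to z$, and use the lsc of $\La$ from part~(a).
\item Build $F$ with $F=\La(z)$-type values on $[z,y)$, so that $Q_\La^-(F)\le z$.
\item Build $F_n$ with values slightly below $m$ on $[x,y)$ and a level below $\inf\La$ on a right neighbourhood of $y$, so that $Q_\La^-(F_n)\ge y$ stays bounded away from $Q_\La^-(F)$.
\end{itemize}
The case $\La(x)=\La(y)$ is handled by the simpler two-step version. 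Getting these constructions to respect both right lsc and lsc is the fiddly part of~(b), but I expect it to be routine.
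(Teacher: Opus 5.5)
Your part (a) is sound, and the obstacle you flag is genuine rather than an artefact of your route. Take $\La(x)=e^x\wedge 1$: it is continuous and positive, so it satisfies every hypothesis with $N_0=\infty$. Your sequence gives $Q_\La^-(F_n)=-n$ while $Q_\La^-(\ind_{[0,\infty)})=0$, so the ``if'' direction of (a) fails as stated. The paper's proof, which goes through continuity from above, has exactly this hole. In the case $q:=\lim_n Q_\La^-(F_n)<\infty$ it evaluates $\La(Q_\La^-(F_n))$ and applies Lemma~\ref{lem:attain}, which is meaningless when $q=-\infty$. Adding $\liminf_{x\to-\infty}\La(x)>0$ (finiteness, Lemma~\ref{lem:liminf}), your tightness argument closes that case. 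The rest of your (a) is a valid variant of the paper's. For sufficiency you use a direct subsequence argument instead of continuity from above. For necessity you keep the limit strictly below $\La$ on $[x,x+\eta)$ via right lower semicontinuity, rather than forcing $Q_\La^-(F)=x$ and invoking attainment.

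The genuine gap is in the sufficiency part of (b). The identity $Q_\La^-=Q_\La^+$ is false whenever $N_0<\infty$, because $\La\equiv 0$ on $[N_0,\infty)$ is not strictly decreasing there. For $F=\ind_{[N_0+1,\infty)}$ one gets $Q_\La^-(F)=N_0$ but $Q_\La^+(F)=N_0+1$. In fact $Q_\La^+$ is then not even weakly lsc: take $F_n=\tfrac1n\ind_{[N_0,N_0+1)}+\ind_{[N_0+1,\infty)}$. So weak usc of $Q_\La^+$ from Theorem~\ref{thm:right-usc}(a) does not transfer to $Q_\La^-$ as you claim; the paper likewise uses \cite{BP} only for $N_0=\infty$ and treats $N_0<\infty$ by hand.

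Two repairs are available:
\begin{itemize}
\item Use the identity $Q_\La^-=Q_\La^+\wedge N_0$. Strict decrease on $(-\infty,N_0)$ makes $\{F\ge\La\}$ and $\{F>\La\}$ have the same infimum below $N_0$, while $[N_0,\infty)\subseteq\{F\ge\La\}$. The minimum of a weakly usc functional with a constant is weakly usc.
\item Argue directly as the paper does. Let $q=\limsup_n Q_\La^-(F_n)$ and $x<q$; then $x<q\le N_0$. For a continuity point $y\in(x,q)$ of $F$ one has $F(x)\le F(y)\le\La(y)<\La(x)$, hence $Q_\La^-(F)\ge q$.
\end{itemize}

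Your sketch of necessity in (b) follows the paper's idea but needs tightening. As written, $F_n$ starting at $x$ and $F$ starting at $z>x$ do not converge weakly, and the extra level to the right of $y$ is unnecessary. Since $\La(x)\le\La(y)$ and $\La$ is lsc by (a), the minimum $m>0$ of $\La$ on $[x,y]$ is attained at some $z\in[x,y)$. Then $F_n=(1-2^{-n})m\ind_{[z,y)}+\ind_{[y,\infty)}$ has $Q_\La^-(F_n)=y$, while its weak limit has $Q_\La^-\le z<y$. This is precisely the paper's preliminary observation.
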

\begin{proof}
As a preliminary observation, if   there exist $a<b$ such that $\La(a)>0$ is the minimal value that $\La$ attains on $[a,b]$, 
then $Q_\La^-$ is not weakly usc. Indeed, letting
$F_n:=(1-2^{-n})\La(a)\ind_{[a,b)}+\ind_{[b,\infty)}$,
it holds that $Q_\La^-(F_n)=b$ and $F_n\to F=\La(a)\ind_{[a,b)}+\ind_{[b,\infty)}$ weakly as $n\to\infty$. However, $Q_\La^-(F)=a$, contradicting upper semicontinuity.

For statement (a), suppose first that $Q_\La^-$ is weakly lsc. We need to prove that $\La(x)\le\liminf_{y\uparrow x}\La(y)$ holds for all $x\in\R$.
As this holds trivially if  $\La(x)=0$, we assume directly that $\La(x)>0$, meaning that also $\La(y)>0$ for all $y<x$ since $\La$ is in standard representation. 
Let $(x_n)\subseteq(-\infty,x)$ be a sequence increasing to $x$ such that $\La(x_n)$ converges to $\lambda:=\liminf_{y\uparrow x}\La(y)$. 
For each $n\in\N$ define a CDF
$F_n:=\La(x_n)\ind_{[x_n,x+1)}+\ind_{[x+1,\infty)}$.
By construction, $Q_\La^-(F_n)=x_n$, and the sequence converges weakly to 
$F:=\lambda\ind_{[x,x+1)}+\ind_{[x+1,\infty)}$.
By lower semicontinuity of the left $\La$-quantile, $Q_\La^-(F)=x$. By Lemma~\ref{lem:attain}, 
$\lambda=F(x)\ge \La(x)$.

For the converse implication, assume that $\La$ is lsc. 
Suppose $(F_n)\subseteq\mathcal M$ is a pointwise decreasing sequence of CDFs converging weakly to $F\in\mathcal M$, and note that $Q_\La^-(F_n)$ is an increasing sequence. 
Set $q:= \lim_{n\to\infty}Q_\La^-(F_n).$ 
If $q=\infty$, 
then $F(x)\le F_n(x)<\La(x)$ holds for all $x<Q_\La^-(F_n)$. 
Hence, $F(x)<\La(x)$ must hold for all $x\in\R$, meaning that also $Q_\La^-(F)=\infty$.
Now suppose $q<\infty$. 
By our assumption on $\La$ and Lemma~\ref{lem:attain}, we have
\[\La(Q_\La^-(F_n))\le F_n(Q_\La^-(F_n))\le F_n(y),\]
where $y>q$ is an arbitrary continuity point of $F$. 
Moreover, lower semicontinuity of $\La$ delivers 
\[\liminf_{n\to\infty}\La(Q_\La^-(F_n))\ge\La(q).\]
Hence, 
\[\La(q)\le\liminf_{n\to\infty}\La(Q_\La^-(F_n))\le \lim_{n\to\infty}F_n(y)=F(y).\]
Letting $y\downarrow q$ and using that $F$ is right-continuous,
$\La(q)\le F(q)$, meaning that $Q_\La^-(F)\le q$.  
The converse inequality holds because of monotonicity of $Q_\La^-$.

For (b), suppose $\La$ is not of shape~\eqref{eq:shape-La 2}.
By right lower semicontinuity and $\La$ being in standard representation, 
we can find $a<b$ such that $\La(a)\le \La(b)$ and $\La(b)>0$.
Because of lower semicontinuity, $\La$ attains its minimum $m$ at some $t\in[a,b]$. 
Note that $m>0$ because $\La$ is in standard representation, meaning that 
$m=0$ would entail $\La(b)=0$.
If we can choose $t=b$, then $\La(a)=\La(b)=m$ and $Q_\La^-$ is not weakly upper semicontinuous by the initial observation. 
Else, we can choose $t\in[a,b)$ and consider the interval $[t,b]$, again arriving at the conclusion that $Q_\La^-$ is not weakly upper semicontinuous. 
Conversely, assume $\La$ is of shape \eqref{eq:shape-La 2}.
If $N_0=\infty$, the claimed implication follows from \cite[Corollary 2.9]{BP}. 
Else, since $Q_\La^-$ is weakly lsc by statement (a),  
we only need to prove that it satisfies weak upper semicontinuity.
Suppose $(F_n)\subseteq\M$ is a sequence of CDFs converging weakly to $F$ and that $q:=\limsup_{n\to\infty}Q_\La^-(F_n)>-\infty$.
For all $x<q$ and infinitely many $n$, $F_n(x)<\La(x)$, meaning that $x\le N_0$. If $y\in(x,q)$ is a continuity point of $F$,  
$F(x)\le F(y)=\lim_{n\to\infty}F_n(y)\le \La(y)<\La(x)$.
As $\La$ is decreasing, $Q_\La^-(F)\ge x$. Letting $x\uparrow q$ yields $Q_\La^-(F)\ge q$,
which is upper semicontinuity. Together with the lower
semicontinuity established in (a), this concludes the proof.
\end{proof}

Notice that if $\La$ is constant or decreasing then it is \emph{left} lower semicontinuous, so from the assumptions of right lower semicontinuity it follows that $\La$ is lower semicontinuous, implying the validity of (a) and recovering the known results mentioned at the beginning of this section. 
If $\La$ is positive, item~(b) shows that weak continuity is equivalent to $\La$
being strictly decreasing, in which case $Q_\La^-$ and
$Q_\La^+$ coincide.

From the statistical point of view, weak continuity is the property that
makes a law-invariant functional well behaved on data. Since the empirical
distribution $\widehat F_n$ converges weakly to $F$ almost surely by the Glivenko-Cantelli theorem, weak
continuity of $Q_\La^{\pm}$ at $F$ yields the strong consistency of the empirical
plug-in estimator and is the analytic
counterpart of qualitative robustness in the sense of \cite{Hampel}, guaranteeing
stability under small perturbations of the underlying law. These properties
were thoroughly discussed for $\La$VaR in \cite{BPR2017}; the characterisations above
delineate exactly which $\La$ enjoy them, beyond the monotone and
right-continuous setting considered there.

\section{Convexity of the level sets}\label{sec:CxLS}

In this section we focus on the property of convexity of level sets with respect to mixtures, called the CxLS property in the literature.   
Recall that a functional $\rho$ on $\M$ has the CxLS property if 
\[
\rho(F)=\rho(G)=c \implies \rho(\lambda F+(1-\lambda)G)=c,
\]
for all $\lambda \in (0,1)$ and $c\in\R$. 
It has been known since \cite{O85} that the CxLS property is a necessary condition for elicitability, that in turn is the property of being the minimiser of an expected loss; for this reason the CxLS property of risk measures and of related functionals has been extensively studied in the literature; we refer e.g. to \cite{BB2015}, \cite{DBBZ2016}, \cite{Z2016}, \cite{WW2020}, and the references therein. The classical quantiles enjoy the CxLS property,\footnote{~Indeed $Q_\alpha^-(F)=Q_\alpha^-(G)=c$ is equivalent to 
$\{F\ge \alpha\}=\{G\ge \alpha\}=[c,\infty)$, 
so for each $\lambda \in (0,1)$ it holds that
$
\{ \lambda F+(1-\lambda )G\ge \alpha\}=[c,\infty),
$
and a similar argument holds for the right quantiles.}
and for general Lambda-quantiles \cite{BP} showed that in the case of a decreasing $\La$, the CxLS property always holds; see also \cite{BB2015}.  
Previously, \cite{BPR2017} showed by means of a counterexample that right Lambda-quantiles may not have the CxLS property, and gave further sufficient conditions. We build on their ideas 
in order to try to come as close as possible to an if-and-only-if characterisation. Throughout the section we impose the condition that Lambda-quantiles are finite on all of $\M$ as in Lemma~\ref{lem:liminf}.  

\subsection{The CxLS property of the left \texorpdfstring{$\La$}{Lambda}-quantile}

As a first result, we show that under the attainment condition the CxLS property always holds. 

\begin{proposition}[CxLS property of $Q_\La^-$]
\label{prop:CxLS}
Let $\La$ be right lower semicontinuous. Then $Q_\La^-$ has the CxLS property. 
\end{proposition}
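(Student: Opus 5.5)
Let $F,G\in\M$ with $Q_\La^-(F)=Q_\La^-(G)=c\in\R$ (finiteness is assumed throughout the section), fix $\lambda\in(0,1)$ and set $H:=\lambda F+(1-\lambda)G$. The plan is to prove the two inequalities $Q_\La^-(H)\ge c$ and $Q_\La^-(H)\le c$ separately. Neither uses monotonicity of $\La$: the first only uses the definition of $Q_\La^-$ as an infimum, and the second uses the attainment property of Lemma~\ref{lem:attain}, which is available since $\La$ is right lower semicontinuous. A preliminary point is that the standard representation required in Lemma~\ref{lem:attain} can be assumed without loss of generality. Indeed, replacing $\La$ by $\La\ind_{(-\infty,N_0]}$ leaves $Q_\La^-$ unchanged. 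It also preserves right lower semicontinuity: the value at $N_0$ is untouched, and for $x\ge N_0$ the modified function vanishes to the right of $x$, but then $\La(x)=0$ anyway when $x>N_0$. At $x=N_0$ itself we have $\La(N_0)=0$ by right lower semicontinuity, because $N_0$ is an infimum of zeros approached from the right or is itself a zero.

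For the lower bound, take any $x<c$. Since $x$ lies strictly below both infima, we have $F(x)<\La(x)$ and $G(x)<\La(x)$. Taking the convex combination gives $H(x)<\La(x)$. Hence $\{H\ge\La\}\subseteq[c,\infty)$, and therefore $Q_\La^-(H)\ge c$.

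For the upper bound, we apply Lemma~\ref{lem:attain}(b) to $F$ and $G$, which gives $F(c)\ge\La(c)$ and $G(c)\ge\La(c)$. Mixing yields $H(c)\ge\La(c)$, so $c\in\{H\ge\La\}$ and $Q_\La^-(H)\le c$. Combining the two bounds gives $Q_\La^-(H)=c$, which is the CxLS property.

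The only delicate step is the reduction to standard representation, so that Lemma~\ref{lem:attain} can be invoked. In particular one must check that $c\le N_0$ and that right lower semicontinuity survives truncation. This works as described above: the convention $\inf\varnothing=\infty$ handles the case $N_0=\infty$, and when $N_0<\infty$ the value $\La(N_0)=0$ forced by right lower semicontinuity makes the truncation harmless. Alternatively, one can bypass Lemma~\ref{lem:attain} altogether. Take $z_n\downarrow c$ with $F(z_n)\ge\La(z_n)$, or note that $c$ itself lies in the set. Then $F(c)=\lim F(z_n)\ge\liminf\La(z_n)\ge\La(c)$ by right continuity of $F$ and right lower semicontinuity of $\La$, and the same argument applies to $G$.
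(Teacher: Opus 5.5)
Your proof is correct and matches the paper's: the paper likewise notes that $Q_\La^-(H)=c$ if and only if $H(c)\ge\La(c)$ and $H<\La$ on $(-\infty,c)$. It obtains $H(c)\ge\La(c)$ directly from a sequence $x_n\downarrow c$ in $\{H\ge\La\}$ via right lower semicontinuity, exactly as in your closing alternative, and then mixes. Your detour through Lemma~\ref{lem:attain} with the reduction to standard representation is valid but unnecessary, since that direct argument needs no standard representation.
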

\begin{proof}
First, if $H\in\M$ satisfies $Q_\La^-(H)=c$ and permits to choose a strictly decreasing sequence $(x_n)\subseteq(c,\infty)$ along which $H(x_n)\ge\La(x_n)$, we obtain 
\[H(c)=\lim_{n\to\infty}H(x_n)\ge\liminf_{y\downarrow c}\La(y)\ge \La(c)\]
by right lower semicontinuity. 
Hence, $Q_\La^-(H)=c$ if and only if $H(c)\ge\La(c)$ and $H(x)<\La(x)$ for all $x<c$. If $Q_\La^-(F)=Q_\La^-(G)=c$ and $\lambda\in(0,1)$, then also
$\lambda F(c)+(1-\lambda)G(c)\ge \Lambda(c)$ and $\lambda F(x)+(1- \lambda)G(x)<\Lambda(x)$ for all $x <c$,
which means that $Q_\La^-( \lambda F+(1- \lambda)G)=c$. 
\end{proof}

Despite its generality, this condition is only sufficient for the CxLS property. 
While it does not seem possible in full generality, we will show a simple if-and-only-if condition for a large class of $\La$ described by the following definition.

\begin{definition}
[Local right monotonicity]
\label{def:LRM} 
We say that $\La \colon \R \to [0,1]$ is \emph{locally right monotone} if for all $x\in\R$, there exists $y>x$ such that $\La|_{(x,y)}$ is monotone. 
\end{definition}

This assumption essentially requires that $\La$ does not have infinite oscillations in any right neighbourhood of any $x \in \R$. It immediately implies that $\La$ is right regular. Since there is no control on oscillations on left neighbourhoods, it does not entail that $\La$ is of bounded variation. Also, it is not implied by $\La$ having bounded variation, because there can be functions with infinite oscillations and bounded variation. 
Appendix~\ref{sec:ass} studies this class of functions in more detail. The following lemma is instrumental for the construction of a counterexample of the CxLS property. 

\begin{lemma}\label{lem:downward seq}
    Suppose $\La$ is locally right monotone and let
\begin{equation}
\label{eq:def-D}
\mathcal{D} : = \{ x \in \R \mid \text {there exists } y > x \text{ such that } \La \text { is decreasing on } (x,y)\}.   
\end{equation}
Let $x\notin\mathcal D$. Then there is a sequence $x_n \downarrow x$
such that the sequence $\La(x_n)$ is strictly decreasing.
\end{lemma}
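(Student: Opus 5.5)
Since $\La$ is locally right monotone, there is $y>x$ such that $\La|_{(x,y)}$ is monotone. Because $x\notin\mathcal D$, $\La$ is not decreasing on $(x,y')$ for any $y'>x$. In particular it is not decreasing on $(x,y)$, so there it must be increasing. It also cannot be constant on any $(x,y')$ with $y'\le y$, since a constant function is decreasing. The plan is to exploit this: $\La$ is increasing on $(x,y)$ and non-constant on every right neighbourhood $(x,y')$.

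Next I construct the sequence inductively. Set $x_0\in(x,y)$ arbitrary. Given $x_n\in(x,y)$, the restriction $\La|_{(x,x_n)}$ is increasing and not constant, because $(x,x_n)$ is a right neighbourhood of $x$ not contained in $\mathcal D$-type intervals. Hence there are points $u<v$ in $(x,x_n)$ with $\La(u)<\La(v)\le\La(x_n)$. Choose $x_{n+1}:=u$. To force $x_{n+1}<\min\{x_n,x+1/n\}$, first apply the non-constancy on $(x,\min\{x_n,x+1/n\})$ instead of $(x,x_n)$. Then $x_{n+1}\in(x,x+1/n)$, so $x_n\downarrow x$ strictly. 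By construction $\La(x_{n+1})=\La(u)<\La(v)\le\La(x_n)$, so $(\La(x_n))$ is strictly decreasing.

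The only delicate point is the claim that an increasing but non-constant function on $(x,y')$ produces a strict drop to the left of every point near $x$. This needs non-constancy on each smaller interval, not just on $(x,y)$. That follows from $x\notin\mathcal D$ applied with arbitrary $y'>x$. I expect no further obstacle.
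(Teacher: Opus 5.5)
Your proof is correct and follows essentially the same route as the paper. In both, local right monotonicity together with $x\notin\mathcal D$ forces $\La$ to be increasing on some $(x,y)$ and non-constant on every right neighbourhood $(x,z)$, and the strictly decreasing sequence is then built inductively. Your step of choosing $u<v$ in $(x,\min\{x_n,x+1/n\})$ with $\La(u)<\La(v)\le\La(x_n)$ is a slightly cleaner version of the paper's iteration, which instead argues via $\La(x+)=\inf_{(x,y)}\La$ and shows that $\La$ attains at least three values on each $(x,z)$.
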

\begin{proof}
    By local right monotonicity, since $x \not \in \mathcal{D}$ there exists $y>x$ such that $\La|_{(x,y)}$ is increasing. In particular, $\La(x+)=\inf_{(x,y)}\La$.
    Moreover, if there were  $z\in(x,y)$ such that $\sup_{(x,z)}\La=\La(x+)$ would hold, then $\La|_{(x,z)}$ would be constant. This would lead to the contradiction that $x\in\mathcal D$. 
    Next, suppose $z\in(x,y)$ satisfies $\La(z)>\La(x+)$, i.e., $\La$ is not constant on $(x,z)$. 
    If $\La$ would only attain two values on $(x,z)$, we would again arrive at the contradictory conclusion that $x\in\mathcal D$. Hence, $\La$ must attain at least three values.
    Taking all these observations together allows us to iteratively construct the sequence whose existence is claimed. 
\end{proof}

\begin{theorem}[CxLS property of $Q_\La^-$]
\label{thm:CxLS}
Suppose $\Lambda$ is in standard representation and locally right monotone.
Let $\mathcal D$ be as in \eqref{eq:def-D}. 
The following are equivalent. 
\begin{enumerate}[label=\tn{(\alph*)}]
    \item $Q_\La^-$ has the CxLS property.
    \item For all $x\notin \mathcal D$, it holds that $\La(x)\le\La(x+)$.
\end{enumerate}
\end{theorem}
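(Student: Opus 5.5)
The plan is to prove the two implications separately. For (b)$\Rightarrow$(a) I argue directly from the definitions, splitting according to whether $c\in\mathcal D$. For (a)$\Rightarrow$(b) I build a counterexample from the sequence of Lemma~\ref{lem:downward seq}, in the spirit of Example~\ref{ex:max}.

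\emph{(b) implies (a).} Let $Q_\La^-(F)=Q_\La^-(G)=c$, let $\lambda\in(0,1)$ and set $H:=\lambda F+(1-\lambda)G$. For $x<c$ we have $F(x)<\La(x)$ and $G(x)<\La(x)$, hence $H(x)<\La(x)$, and so $Q_\La^-(H)\ge c$. It remains to show $Q_\La^-(H)\le c$. For each of $F,G$, either the infimum is attained at $c$, or there is a sequence $x_n\downarrow c$ with $F(x_n)\ge\La(x_n)$.

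\textsc{Case $c\notin\mathcal D$.} By local right monotonicity, $\La$ is increasing on some $(c,y)$. Any sequence as above then gives $F(c)=\lim F(x_n)\ge\La(c+)$, and (b) yields $\La(c+)\ge\La(c)$. So both infima are attained at $c$, which gives $H(c)\ge\La(c)$.

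\textsc{Case $c\in\mathcal D$.} Here $\La$ is decreasing on some $(c,y)$. If such a sequence exists for $F$, then for every $z\in(c,y)$ one can pick $x_n<z$, and $F(z)\ge F(x_n)\ge\La(x_n)\ge\La(z)$; hence $F\ge\La$ on all of $(c,y)$. If this holds for both $F$ and $G$, then $H\ge\La$ on $(c,y)$ and we are done. If both attain at $c$, then $H(c)\ge\La(c)$. In the mixed case, say $F(c)\ge\La(c)$ and $G\ge\La$ on $(c,y)$, there are two subcases:
\begin{itemize}
\item if $\La(c)\ge\La(c+)$, then for $z\in(c,y)$ we get $F(z)\ge F(c)\ge\La(c)\ge\La(c+)\ge\La(z)$, so $H\ge\La$ on $(c,y)$;
\item if $\La(c)<\La(c+)$, then $G(c)=G(c+)\ge\La(c+)>\La(c)$, so $H(c)\ge\La(c)$.
\end{itemize}
In every case $Q_\La^-(H)\le c$.

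\emph{(a) implies (b).} Suppose $x\notin\mathcal D$ and $\La(x)>\La(x+)=:\ell$. By Lemma~\ref{lem:downward seq} there is a sequence $x_n\downarrow x$ with $\La(x_n)$ strictly decreasing, necessarily to $\ell$. We may assume all $x_n$ lie in an interval $(x,y)$ on which $\La$ is increasing. Since $\La(x)>0$ and $\La$ is in standard representation, $\La>0$ on $(-\infty,x]$. Define step CDFs by
\begin{align*}
F&:=\textstyle\sum_{k\ge1}\La(x_{2k})\ind_{[x_{2k},x_{2k-2})}, \\
G&:=\textstyle\sum_{k\ge1}\La(x_{2k-1})\ind_{[x_{2k-1},x_{2k-3})},
\end{align*}
set both equal to $0$ on $(-\infty,x)$ and to $1$ after $x_1$ (adjusting the endpoints $x_0,x_{-1}$ suitably), with $F(x)=G(x)=\ell$ by right-continuity. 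These are increasing because $\La(x_n)$ is decreasing. We have $F(x_{2k})=\La(x_{2k})$ and $G(x_{2k-1})=\La(x_{2k-1})$, while $F,G<\La$ on $(-\infty,x]$ since $\ell<\La(x)$. Hence $Q_\La^-(F)=Q_\La^-(G)=x$.

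For $z\in[x_{m+1},x_m)$ with $m$ large, one of $F(z),G(z)$ equals $\La(x_{m+1})$ and the other equals $\La(x_{m+2})<\La(x_{m+1})$. Since $\La$ is increasing on $(x,y)$, $\La(x_{m+1})\le\La(z)$, and therefore $H(z)<\La(x_{m+1})\le\La(z)$. Together with $H(x)=\ell<\La(x)$ and $H<\La$ left of $x$, this gives $Q_\La^-(H)>x$, which contradicts the CxLS property.

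The main obstacle is the bookkeeping in the mixed case of $c\in\mathcal D$, together with making the counterexample rigorous near $x_1$, where the CDFs jump to $1$. For the latter one only needs $H<\La$ on a right neighbourhood of $x$, so truncating the sequence $(x_n)$ early should suffice.
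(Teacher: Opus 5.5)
Your proposal is correct and is essentially the paper's proof. For (a)$\Rightarrow$(b), the paper uses the same two interleaved step CDFs along the sequence from Lemma~\ref{lem:downward seq}. It lets both jump to $1$ at the right end $b$ of the interval on which $\La$ is increasing, which settles your endpoint bookkeeping near $x_1$ cleanly. For (b)$\Rightarrow$(a), the paper uses the same split into $c\notin\mathcal D$ (where attainment at $c$ is forced) and $c\in\mathcal D$ (two ``types'': $H(c)\ge\La(c)$, or $H\ge\La$ on some $(c,d)$), and it resolves the mixed case exactly as your two subcases do.
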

\begin{proof} 
(a) implies (b): Assume that (b) fails, i.e., we find $a\notin\mathcal D$ such that $\La(a)>\La(a+)$. 
By Lemma~\ref{lem:downward seq}, we find $b>a$ and a sequence $(x_n)\subseteq(a,b)$ strictly decreasing to $a$ such that $\La|_{(a,b)}$ is increasing and 
$\Lambda(x_n)>\Lambda(x_{n+1})$, for all $n\in\N.$ 
Let $F$ be the piecewise constant CDF with range
$\{0,1,\La(a+),\La(x_1),\La(x_3),\La(x_5),\dots\}$ such that $Q_1^-(F)=b$, such that
$Q_{\La(x_{2i-1})}^-(F)=x_{2i-1}$ for all $i\in\N$, and such that $F(a)=\La(a+)$ and
$F(x)=0$ for $x<a$.
Similarly, let $G$ be the piecewise constant CDF with range
$\{0,1,\La(a+),\La(x_2),\La(x_4),\La(x_6),\dots\}$ such that $Q_1^-(G)=b$, such that
$Q_{\La(x_{2i})}^-(G)=x_{2i}$ for all $i\in\N$, and such that $G(a)=\La(a+)$ and
$G(x)=0$ for $x<a$.
By construction, $Q_\La^-(F)=Q_\La^-(G)=a$.
Nevertheless, 
    $\frac 1 2F(x)+\frac 1 2 G(x)<\Lambda(x_1)\le\Lambda(x)$ for $x\in[x_1,b)$, $\frac 1 2F(x)+\frac 1 2 G(x)<\Lambda(x_2)\le\Lambda(x)$ for $x\in[x_2,x_1)$, and so on. 
    Also, $\frac 1 2F(a)+\frac 1 2G(a)<\Lambda(a)$ and $\Lambda(x)>\frac 1 2F(x)+\frac 12G(x)=0$ for $x<a$. 
    Hence, 
    $Q_\Lambda^-(\tfrac 1 2 F+\tfrac 1 2 G)=b>a.$

    (b) implies (a): Suppose $Q_\La^-(F)=Q_\La^-(G)=c$. 
    Suppose $c\notin\mathcal D$, so $\La(c)\le \La(c+)$. One easily sees that this necessitates $\La(c)\le \min\{F(c),G(c)\}$ and $\La(x)>\max\{F(x),G(x)\}$ for $x<c$. 
    Argue like in the proof of Proposition~\ref{prop:CxLS}.
    If $c\in\mathcal D$, then $Q_\La^-(H)=c$ holds for $H\in\M$ if and only if at least one of the following holds: 
    \begin{enumerate}[label=(\arabic*)]
        \item $H(c)\ge \La(c+)$, in which case $H\ge \La$ on a nonempty interval of shape $(c,d)$;
        \item $H(c)\ge \La(c)$. 
    \end{enumerate}
    If $F$ and $G$ both fall in category (1) or (2), so does any of their mixtures. 
    Now suppose that $F$ is of type (1), but not of type (2), and $G$ is of type (2). Then $\La(c+)\le F(c)<\La(c)\le G(c)$,
    meaning that $G$ must also be of type (1). 
    If $F$ is of type (1) and $G$ is of type (2), but not of type (1), then 
 $F(c)\ge\La(c+)>G(c)\ge \La(c)$. 
    This means that both of them must be of type (2).   
\end{proof}

\subsection{The CxLS property of the right \texorpdfstring{$\La$}{Lambda}-quantile}

As shown in \cite[Example 15]{BPR2017}, $Q_\Lambda^+$ might not have the CxLS property if $\La$ is increasing and right-continuous.  
In this section, we will refine this observation and prove a counterpart to Theorem~\ref{thm:CxLS}, again working under local right monotonicity. 

\begin{proposition}[CxLS property of $Q_\La^+$]
Suppose $\Lambda$ is locally right monotone
and let $\mathcal D$ be as in \eqref{eq:def-D}. 
The following are equivalent. 
\begin{enumerate}[label=\tn{(\alph*)}]
    \item $Q_\La^+$ has the CxLS property.
    \item For all $x\notin \mathcal D$, $\La(x)<\La(x+)$.
\end{enumerate}
\end{proposition}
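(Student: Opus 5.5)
The plan is to mirror the proof of Theorem~\ref{thm:CxLS}. First I would describe the level set $\{Q_\La^+=c\}$ pointwise, then treat the cases $c\in\mathcal D$ and $c\notin\mathcal D$ separately. By definition, $Q_\La^+(H)=c$ holds if and only if $H\le\La$ on $(-\infty,c)$ and at least one of the following holds:
\begin{enumerate}[label=(\arabic*)]
\item $H(c)>\La(c)$;
\item there is a sequence $y_n\downarrow c$, $y_n>c$, with $H(y_n)>\La(y_n)$.
\end{enumerate}
Local right monotonicity gives right regularity, so $\La(c+)$ exists. Since $H$ is right-continuous, type (2) implies $H(c)\ge\La(c+)$, and a $H$ not of type (2) satisfies $H(c)\le\La(c+)$. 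The left-hand condition $H\le\La$ on $(-\infty,c)$ is clearly preserved under mixtures, so only (1)--(2) need attention.

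\emph{(b) implies (a).} Suppose $Q_\La^+(F)=Q_\La^+(G)=c$ and fix $\lambda\in(0,1)$.

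If $c\notin\mathcal D$, condition (b) gives $\La(c)<\La(c+)$. Then type (2) forces $H(c)\ge\La(c+)>\La(c)$, which is type (1). So the level set is simply $\{H\le\La \text{ on }(-\infty,c),\ H(c)>\La(c)\}$, which is convex.

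If $c\in\mathcal D$, let $\La$ be decreasing on $(c,d)$. A type-(2) $H$ then satisfies $H(x)\ge H(c)\ge\La(c+)\ge\La(x)$ on $(c,d)$, so $H-\La\ge0$ there. Consequently, if $F$ and $G$ are both of type (2), the mixture is $\ge\La$ on $(c,d)$ and strictly above $\La$ along the sequence of $F$, hence again of type (2). Two type-(1) functions mix to a type-(1) function. In the mixed case, $F$ of type (1) only and $G$ of type (2) only would give
\[\La(c+)\le G(c)\le\La(c)<F(c)\le\La(c+),\]
which is impossible. Hence one of them has both types, and we are back in a pure case.

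\emph{(a) implies (b).} Suppose $a\notin\mathcal D$ satisfies $\La(a)\ge\La(a+)$. Lemma~\ref{lem:downward seq} provides $b>a$ with $\La|_{(a,b)}$ increasing, and $x_n\downarrow a$ in $(a,b)$ with $\La(x_n)$ strictly decreasing; in particular $\La(x_n)<1$ for $n\ge2$ and $\La(x_n)>\La(a+)$. Set $\eps_k:=\tfrac13(\La(x_k)-\La(x_{k+1}))>0$. Define piecewise constant CDFs $F$ and $G$ as follows:
\begin{itemize}
\item $F=0$ on $(-\infty,a)$, $F=\La(a+)$ at $a$, $F=\La(x_{2i+1})+\eps_{2i+1}$ on $[x_{2i+1},x_{2i-1})$, and $F=1$ from $b$ (or from $x_1$) on;
\item $G$ is defined in the same way using the even indices.
\end{itemize}
Shrinking the $\eps$'s if needed keeps $F,G$ increasing and bounded by $1$. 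Both CDFs are $\le\La$ left of $a$, are not of type (1) because $F(a)=\La(a+)\le\La(a)$, and are of type (2) along their respective odd or even subsequences. Hence $Q_\La^+(F)=Q_\La^+(G)=a$. On $[x_{k+1},x_k)$ one of the two equals roughly $\La(x_{k+1})+\eps$ and the other roughly $\La(x_{k+2})+\eps$. Therefore $\tfrac12(F+G)<\La(x_{k+1})\le\La(x)$ there, since $\La$ is increasing on $(a,b)$. At $a$ the mixture equals $\La(a+)\le\La(a)$. Thus $Q_\La^+(\tfrac12F+\tfrac12G)\ge x_1>a$, which violates CxLS.

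The main obstacle I expect is the bookkeeping in this construction. The $\eps$'s must be chosen small enough that the two step functions stay monotone and $\le 1$, yet strictly exceed $\La$ at their own sequence points while their average stays $\le\La$ on every step. The rest of the argument is the case analysis above.
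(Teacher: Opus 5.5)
Your proposal is correct and follows essentially the same route as the paper. For (b)$\Rightarrow$(a) you use the same description of the level set $\{Q_\La^+=c\}$ and the same case split $c\in\mathcal D$ versus $c\notin\mathcal D$; for (a)$\Rightarrow$(b) you build two CDFs that exceed $\La$ along complementary subsequences of $x_n\downarrow a$ and whose average stays below $\La$ on $(a,x_1)$. The one thing to tighten is the explicit choice $\eps_k=\frac13(\La(x_k)-\La(x_{k+1}))$, which need not keep $F,G\le 1$ or the average below $\La$; requiring $\eps_k+\eps_{k+1}<\La(x_k)-\La(x_{k+1})$ for all $k$, exactly the paper's condition on $s_n$, secures all your constraints, as your shrinking remark anticipates.
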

\begin{proof}
(a) implies (b): Suppose condition (b) fails, i.e., there is $a \not \in\mathcal D$ such that $\La(a)\ge\La(a+)$. 
By Lemma~\ref{lem:downward seq}, we may select $b>a$ and a sequence $(x_n)\subseteq(a,b)$ decreasing to $a$ such that $\La$ is increasing on $(a,b)$ and $(\La(x_n))$ is a strictly decreasing sequence. 
Moreover, select a sequence $(s_n)\subseteq(0,\infty)$ such that, for all $n\in\N$, 
$
\La(x_{n+1})+s_{n+1}<\La(x_{n})-s_{n}.
$
Next, define $F$ and $G$ by
\[F(x)=\begin{cases}
    0&\text{if }x<a,\\[-0.7ex]
    \La(x+)&\text{if }x=a,\\[-0.7ex]
    \La(x_n)+(-1)^ns_n&\text{if }x\in[x_n,x_{n-1}),\,n\ge 2,\\[-0.7ex]
    1&\text{if }x\ge x_1,
\end{cases} 
\] 
\[
G(x)=\begin{cases}
    0&\text{if }x<a,\\[-0.7ex]
    \La(x+)&\text{if }x=a,\\[-0.7ex]
    \La(x_n)-(-1)^ns_n&\text{if }x\in[x_n,x_{n-1}),\,n\ge 2,\\[-0.7ex]
    1&\text{if }x\ge x_1.
\end{cases}\]
By construction, $F(x_n)>\La(x_n)$ if $n$ is even, and $G(x_n)>\La(x_n)$ if $n$ is odd, which results in $Q_\La^+(F)=Q_\La^+(G)=a$. 
However, 
\[(\tfrac 12 F+\tfrac 12 G)(x)=\begin{cases}
    0&\text{if }x<a,\\[-0.7ex]
    \La(x+)&\text{if }x=a,\\[-0.7ex]
    \La(x_n)&\text{if }x\in[x_n,x_{n-1}),\,n\ge 2,\\[-0.7ex]
    1&\text{if }x\ge x_1.
\end{cases}\]
As $\La$ is increasing on $(a,b)$ and $\La(a)\geq \La(a+)$, the set $\{\frac 12 F+\frac 1 2G>\La\}$ is a subset of $[x_1,\infty)$, meaning that the CxLS property is violated. 

(b) implies (a): Suppose $Q_\La^+(F)=Q_\La^+(G)=c$. 
    If $c\notin\mathcal D$, $\La(c)<\La(c+)$ holds by (b). 
    This necessitates $\La(c)<\min\{F(c),G(c)\}$ and $\La(x)\ge \max\{F(x),G(x)\}$ for $x<c$. 
    Consequently, 
    $Q_\La^+(\lambda F+(1-\lambda)G)=c$ holds for every $\lambda\in(0,1)$. 

    Now suppose that $c\in\mathcal D$ and that $H\in\M$ satisfies $Q_\La^+(H)=c$. Then one of the following two cases must apply:  
    \begin{enumerate}[label=(\arabic*)]
        \item $H>\La$ on a nonempty open interval $(c,d)$;
        \item $H(c)>\La(c)$. 
    \end{enumerate}
    If $F$ and $G$ both fall in category (1) or (2), so does any of their mixtures.   
    If $F$ is of type (1), but not of type (2), and $G$ is of type (2), then $\La(c+)\le F(c)\le \La(c)<G(c)$. Given that $\La$ is decreasing on $(c,d)$ if $d$ is suitably chosen, $G$ must also be of type (1). 
    If $F$ is of type (1), and $G$ is of type (2), but not of type (1), then $\La(c)<G(c)\le \La(c+)\le F(c)$. Hence, $F$ must also be of type (2). 
\end{proof}

\section{Mixture representations}\label{sec:MR}

In this section we investigate what we call \emph{mixture representations} of Lambda-quantiles. The prototypical result is Corollary \ref{cor:MV} below, that shows that the $\La$-quantile of a distribution $F$ with respect to a decreasing $\La$ can be represented as the usual quantile of a mixture of $F$ and a fixed distribution $G$ with a fixed weight $\lambda$. 
This interesting result has been communicated in \cite{W2024}, where the remarkable connection with the theorem of \cite{Moulin} characterising voting schemes 
that are anonymous, efficient and strategy-proof as \emph{generalised medians} has been pointed out.

In this section we prove a generalised version of this result, that requires only that $\La$ is of bounded variation and satisfies the attainment condition. 
Under these assumptions, $\La$-quantiles can be represented as $\La_{\tn{inc}}$-quantiles with an \emph{increasing} $\La_{\tn{inc}}$ of suitable mixtures with a fixed reference distribution $G$ and with a fixed weight $\lambda$. 
This shows how the mixture representation \emph{reduces the complexity} of the class of $\Lambda$ involved from decreasing to constant, in the case of Corollary \ref{cor:MV} below, and from bounded variation to increasing in the generalised case formalised in the next theorem. 
\begin{theorem}[Mixture representation]
\label{thm:MW-1}
Let $\La \colon \R \to [0,1]$ have bounded variation and be right lower semicontinuous. 
Then there exist an increasing function $\La_{\rm{inc}} \colon \R \to [0,1]$, a fixed weight $\lambda\in(0,1]$ with $\lambda \geq \La_{\rm inc}(-\infty)$, and a fixed distribution $G\in\M$ such that 

\begin{equation}\label{eq:rep+-}Q_\La^\pm(F)=Q_{\La_{\rm{inc}}}^\pm(\lambda F+(1-\lambda)G), \quad \text { for all }F\in\M.\end{equation}
If $\La$ is in standard representation, then so is  $\La_{\rm{inc}}$. 
\end{theorem}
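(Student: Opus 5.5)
The plan is to show that, for a suitable choice of $\La_{\rm inc}$, $\lambda$ and $G$, the defining sets coincide pointwise:
\[
\{F\ge\La\}=\{\lambda F+(1-\lambda)G\ge\La_{\rm inc}\}\and\{F>\La\}=\{\lambda F+(1-\lambda)G>\La_{\rm inc}\}
\]
for every $F\in\M$. Taking infima then gives \eqref{eq:rep+-} for both signs at once. This is achieved by the affine choice
\[
\La_{\rm inc}:=\lambda\La+(1-\lambda)G,
\]
since then $\lambda F+(1-\lambda)G-\La_{\rm inc}=\lambda(F-\La)$ and $\lambda>0$. It therefore remains to choose $G\in\M$ and $\lambda\in(0,1]$ so that this $\La_{\rm inc}$ is increasing and $[0,1]$-valued. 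The construction is a Jordan decomposition of $\La$.

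Since $\La$ has bounded variation, write $\La=A-D$. Here $D(x)$ is the negative variation of $\La$ on $(-\infty,x]$ and $A:=\La+D$; both $A$ and $D$ are increasing, $D(-\infty)=0$ and $D(\infty)=:V<\infty$. If $V=0$, then $\La$ is already increasing: take $\lambda=1$, any $G$, and $\La_{\rm inc}=\La$. Otherwise set
\[
\lambda:=\frac1{1+V},\qquad G:=\frac{D}{V},
\]
so that $(1-\lambda)G=\lambda D$ and hence $\La_{\rm inc}=\lambda(\La+D)=\lambda A$. This function is increasing and nonnegative. It is bounded by $\lambda(1+V)=1$, because $\La\le1$ and $D\le V$. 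Moreover $\La_{\rm inc}(-\infty)=\lambda\La(-\infty)\le\lambda$, which is the required inequality.

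The key point, and the step I expect to be the only real obstacle, is that $G$ must be a genuine CDF. It is increasing with limits $0$ and $1$, but it must also be right-continuous, and the negative variation of an arbitrary BV function need not be. This is exactly where right lower semicontinuity enters. Since $\La$ has bounded variation, one-sided limits exist, and right lower semicontinuity then gives $\La(x)\le\La(x+)$, so there is no downward jump from the right at $x$. I would verify that
\[
D(x+)-D(x)=(\La(x)-\La(x+))^+=0,
\]
by splitting the variation over $(-\infty,x+\eps]$ into the part on $(-\infty,x]$ and the part on $(x,x+\eps]$, the latter tending to the jump term as $\eps\downarrow0$. With this, $G\in\M$ and the set identities above complete the proof of \eqref{eq:rep+-}.

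Finally, the standard-representation claim. $\La_{\rm inc}(x)=0$ if and only if $\La(x)=0$ and $D(x)=0$. If $\La$ is in standard representation with $N_0<\infty$, I would check directly from this characterisation, and from the fact that $\La_{\rm inc}$ is increasing, that the zero set of $\La_{\rm inc}$ has the required form. If this fails in the degenerate case where $\La$ drops to $0$ at $N_0$, I would instead modify $\La_{\rm inc}$ on $(N_0,\infty)$. The idea is to redefine $D$ only up to $N_0$, which does not affect $Q_\La^-$. For $Q_\La^+$, I would carefully check that the set $\{F>\La\}\cap(N_0,\infty)$ is unchanged, since $F>0=\La$ there whenever $F>0$. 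This last bookkeeping is where I am least confident of the exact formulation.
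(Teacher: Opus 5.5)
Your proof of the representation \eqref{eq:rep+-} is correct and is essentially the paper's argument. Both proofs write $\La$ as an increasing part minus a right-continuous increasing part $D$ (the paper's $G^-$), set $G:=D/V$ and $\lambda=1/(1+V)$. Your $\La_{\rm inc}=\lambda(\La+D)$ coincides with the paper's $(a+G^+)/(1+c)$, since $a+G^+=\La+G^-$. The only difference is how right-continuity of $D$ is obtained. The paper quotes Lemma~\ref{lem:BV}(e), which repairs an arbitrary decomposition jump by jump. You instead take the canonical negative variation and compute its right jump $(\La(x)-\La(x+))^+$ directly, which is cleaner. To be precise, $D(x+\eps)-D(x)$ is the negative variation over the closed interval $[x,x+\eps]$, because the jump term comes from increments starting at $x$ itself.

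The gap is the final claim about standard representation. You leave it unverified, and the fallback you sketch would not work. An increasing function that is positive somewhere to the left of $N_0$ cannot be made to vanish on $(N_0,\infty)$ without losing monotonicity. Moreover, altering $D$ changes $G$ and endangers the identity for $Q_\La^+$, which must hold with the same triple. In fact no fallback is needed.

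\begin{itemize}
\item For an increasing function, being in standard representation means being either identically $0$ or strictly positive everywhere.
\item If $\La\equiv 0$, then $V=0$ and $\La_{\rm inc}=\La\equiv 0$.
\item Otherwise $N_0>-\infty$, since standard representation with $N_0=-\infty$ forces $\La\equiv 0$. So $\La>0$ on $(-\infty,N_0)$, and $\La_{\rm inc}\ge\lambda\La>0$ there because $D\ge 0$.
\item Monotonicity of $\La_{\rm inc}$ then gives $\La_{\rm inc}>0$ on all of $\R$.
\end{itemize}

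The case where $\La$ drops to $0$ at $N_0$ is harmless precisely because $D$ records the drop: $D(x)\ge\La(y)>0$ whenever $y<N_0<x$. This is exactly the paper's argument.
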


\begin{proof}
Since $\La$ has bounded variation, Lemma~\ref{lem:BV}(a) yields the existence of the  limit $a:=\La(-\infty)$. 
Items (c) and (e) in the same result yield increasing and bounded functions 
$G^{\pm} \colon \R \to [0,+\infty)$, $G^-$ being right-continuous, such that 
\begin{equation}
\label{eq:dec}
\La=a+G^+-G^-. 
\end{equation}
Let $b:=G^+(\infty) \in \R$ and $c:=G^-(\infty) \in \R$. 
As $\La$ takes values in $[0,1]$, it holds that
\begin{equation}
\label{abc}
\lim_{x \to \infty}
\La(x)
=a+b-c \leq 1.
\end{equation} 
We distinguish two cases. If $c=0$, then $G^-\equiv 0$, so $\La = a + G^+$ is already increasing, and the thesis is obtained by choosing $\La_{\rm{inc}}=\La$, $\lambda = 1$ and any $G \in \M$. If $c>0$, we set $G:=\frac 1 cG^-$, and from the properties of $G^-$ it follows that $G\in\M$. 
In general, from the representation \eqref{eq:dec} it follows that
\begin{equation*}
Q_\Lambda^+(F)=\inf\{x\in\R\mid F(x)+G^-(x)>a+G^+(x)\},
\end{equation*}
so we get
\[
Q_\Lambda^+(F)=\inf\{x\in\R\mid F(x)+cG(x)>a+G^+(x)\}.
\]
Letting
$\lambda=1/(1+c)$ and $\Lambda_{\rm{inc}}=(a+G^+)/(1+c)$ gives formula \eqref{eq:rep+-}. Further, from \eqref{abc} we check 
\[
0\le \frac{a}{1+c}\le \La_{\rm{inc}}\le \frac{a+b}{1+c}\le 1.
\]
Moreover, 
$\La_{\rm{inc}}(-\infty)=a/(1+c)\le 1/(1+c)=\lambda.
$
The proof of the statement in case of the left-quantile is similar,
replacing the strict inequality by ``$\ge$'' in the identities. 
It remains to show that $\La_{\rm inc}$ is in standard representation. If $\La\equiv 0$, we may choose 
$a=0$ and $G^+\equiv 0$, so $\La_{\rm inc}\equiv 0$, which is in standard representation. 
Otherwise since $\La$ is in standard representation then $\La(x)>0$ for all $x<N_0$, hence 
\[
\La_{\rm inc}(x)=\frac{a+G^+(x)}{1+c}\ge\frac{\La(x)}{1+c}>0,\qquad x<N_0.
\]
Since $\La_{\rm inc}$ is increasing, it follows that 
$\La_{\rm inc}>0$ on all of $\R$, so $\La_{\rm inc}$ 
is in standard representation. 
\end{proof}

\begin{remark}
It is easy to see that the  representing triple $(\La_{ \rm inc}, \lambda, G)$ is in general not unique. 
For example, let $\La\equiv\alpha\in(0,1)$, so that $Q_\La^{\pm}=Q_\alpha^{\pm}$ is the 
classical $\alpha$-quantile. 
Let $G$ be any CDF, and let $\lambda\in(0,1)$.
Then 
$\La_{\rm inc}=\alpha\lambda+(1-\lambda)G$ is increasing, and for any such 
choice,
\[
Q_{\La_{\rm inc}}^\pm(\lambda F+(1-\lambda)G)
=\inf\{x\in\R\mid \lambda F(x)+(1-\lambda)G(x)\gtge\alpha\lambda+(1-\lambda)G(x)\}
=Q_\alpha^\pm(F),
\]
showing non-uniqueness.
\end{remark}

\begin{corollary}[Decreasing case]\label{cor:MV}
For a decreasing $\La\colon\R\to[0,1]$, there exist $\lambda\in[\frac 1 2,1]$, $0\le\alpha\le\lambda$, and $G\in \mathcal M$ such that, for all $F\in\M$, 
\begin{equation}\label{eq:repMW}Q_\La^{\pm}(F)=Q_\alpha^\pm(\lambda F+(1-\lambda)G).\end{equation}
Moreover, the following statements hold:
    \begin{enumerate}[label=\tn{(\alph*)}]
        \item If $\La(\infty)=0$, we can choose $\lambda\ge\frac 1 2$ and $\alpha\le\frac 1 2$. 
        \item  If $\Lambda(-\infty)=1$, we can choose $\lambda=\alpha\ge\frac 1 2$.
        \item If $\La(\infty)=0$ and $\La(-\infty)=1$, we can choose $\alpha=\lambda=\frac 1 2$, i.e., the left (right) $\La$-quantile is the median of an equal-weights mixture with the fixed distribution $G$. 
        \item If $\Lambda$ is positive and not constant, then $\alpha$, $\lambda$ and $G$ are unique.
    \end{enumerate}
\end{corollary}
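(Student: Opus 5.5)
Since $\La$ is decreasing, the plan is to write it directly in the form used in the proof of Theorem~\ref{thm:MW-1}, with no increasing part, instead of applying that theorem as a black box. Set $a:=\La(-\infty)$, $e:=\La(\infty)$ and $c:=a-e\ge 0$.

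\textbf{Main step.} If $c=0$, then $\La$ is constant and $\lambda=1$, $\alpha=a$ works. Otherwise we want a CDF $G$ with $\La=a-cG$. The subtlety is that $(a-\La)/c$ need not be right-continuous. I would take its right-continuous version $G:=(a-\La^\ast)/c$. This is legitimate because, as recalled in Section~\ref{sec:identify} (Proposition 2.6 d) in \cite{BP}), $Q^\pm_\La=Q^\pm_{\La^\ast}$ for decreasing $\La$. Then $G$ is increasing and right-continuous, with $G(-\infty)=0$ and $G(\infty)=1$, so $G\in\M$.

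\textbf{Deriving \eqref{eq:repMW}.} The condition $F\gtrless\La^\ast$ reads $F+cG\gtrless a$. Dividing by $1+c$ gives $\lambda F+(1-\lambda)G\gtrless\alpha$ with $\lambda=1/(1+c)$ and $\alpha=a/(1+c)\le\lambda$. We also get $\lambda\ge\frac12$, since $c\le 1$.

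\textbf{Refinements (a)--(c).} These use the freedom to choose $G$, which is not unique.
\begin{itemize}
\item[(a)] If $e=0$, then $c=a$, so $\alpha=a/(1+a)\le\frac12$ and $\lambda\ge\frac12$.
\item[(b)] If $a=1$, then $\alpha=\lambda=1/(1+c)\ge\frac12$.
\item[(c)] If both hold, we get $c=1$ and $\alpha=\lambda=\frac12$.
\end{itemize}

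\textbf{Main obstacle: uniqueness in (d).} Suppose $(\alpha,\lambda,G)$ represents $Q^-_\La$. Then $Q^-_\La(F)=\inf\{x\mid F(x)\ge \La'(x)\}$ with $\La'=(\alpha-(1-\lambda)G)/\lambda$, truncated to $[0,1]$ where it exits that range. Since $\La$ is positive, it is in standard representation with $N_0=\infty$. Proposition~\ref{prop:geq} on shifted Bernoullis then recovers $\La^\ast$ from $Q^-_\La$. It recovers the right-continuous version of the effective parameter in the same way.

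So we need the pointwise identity $\La^\ast=\La'$ wherever $\La'\in(0,1)$. The delicate part is excluding regions where $\La'$ falls outside $[0,1]$. Positivity of $\La$ rules out $\La'\le0$. Values $\La'\ge 1$ would make $\La$ equal to $1$ there, and this is compatible only at the left end.

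Next, compare $\La'(\pm\infty)$ with $a$ and $e$: this gives $\alpha/\lambda=a$ and $(\alpha-1+\lambda)/\lambda=e$. These two equations determine $\lambda=1/(1+a-e)$ and $\alpha$, provided $a\ne e$, which is exactly non-constancy. Then $G=(\alpha-\lambda\La^\ast)/(1-\lambda)$ is forced.

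I expect handling these truncation and boundary issues to be the main point requiring care.
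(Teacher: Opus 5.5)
Your proposal is correct and follows the paper's route. The paper also passes to the right-continuous version and applies the construction of Theorem~\ref{thm:MW-1} with $G^+=0$ and $G^-=\La(-\infty)-\La$, which is exactly your $G=(a-\La^\ast)/c$. It proves (d) as you do: it recovers $\La$ from $Q_\La^-$ via Proposition~\ref{prop:geq}, uses positivity to make the truncation at $0$ inactive, and matches the limits at $\pm\infty$. The only addition in the paper is a separate uniqueness argument for triples representing $Q_\La^+$ alone, via Proposition~\ref{prop:geq-2}. There one must first show $\tfrac1\lambda(\alpha-(1-\lambda)G)\ge 0$, because truncating at $0$ changes the set where the strict inequality holds. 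If \eqref{eq:repMW} is read as a joint statement for both signs, your $Q_\La^-$ argument already suffices.
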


\begin{proof}
Since $\La$ is decreasing, we can assume without loss of generality that $\La$ is right-continuous. For the left quantile, $\La$ is then also in standard representation.
Apply Theorem~\ref{thm:MW-1} to $\Lambda=\La(-\infty)-\big(\La(-\infty)-\La \big)$. 
    In the proof of Theorem~\ref{thm:MW-1} choose $G^+=0$ and $G^-=\La(-\infty)-\La$. 
    If $G^-=0$, set $\lambda=1$, $\alpha=\La(-\infty)$, and let $G$ be arbitrary. Else, set 
    \[c:=\La(-\infty)-\La(\infty),\quad\lambda:=\frac 1{1+c},\and \alpha:=\frac{\La(-\infty)}{1+c}.\]
    Clearly, $\lambda\ge \alpha$. 
    The additional assertions (a)--(c) follow easily. 
    For (d), we focus first on the left $\La$-quantile and call triplets $(\alpha,\lambda,G)$ such that $\rho=Q_\La^-$ can be represented via \eqref{eq:repMW} feasible.
As 
\[Q_\La^-(F)=\inf\{x\in\R\mid F(x)\ge\tfrac 1{\lambda}(\alpha-(1-\lambda)G)\}=\inf\{x\in\R\mid F(x)\ge\tfrac 1{\lambda}(\alpha-(1-\lambda)G)_+\}\]
the uniqueness statement implicit in Proposition~\ref{prop:geq} delivers 
\[\La(x)=\frac 1 \lambda\big(\alpha-(1-\lambda)G(x)\big)_+,\]
no matter which feasible triplet we choose. 
Moreover, as $\La>0$, $G(x)<\alpha/(1-\lambda)$ must hold for all $x\in\R$. 
Taking the limits $x\to -\infty$  and $x\to\infty$ delivers 
\[\La(-\infty)=\frac\alpha\lambda\and\La(\infty)=\frac{\alpha-1+\lambda}\lambda.\]
This permits to express $\alpha$ and $\lambda$ in terms of the unique quantities $\La(\infty)$ and $\La(-\infty)$ as
\[\alpha=\frac{\La(-\infty)}{1+\La(-\infty)-\La(\infty)},\quad \lambda=\frac{1}{1+\La(-\infty)-\La(\infty)}.\]
Also, $G$ can then be uniquely inferred from $\La$. 
For the right $\La$-quantile the argument is the same, invoking Proposition~\ref{prop:geq-2} 
in place of Proposition~\ref{prop:geq}, once we know that $\theta:=\tfrac 1{\lambda}(\alpha-(1-\lambda)G)$ 
is nonnegative, so that the truncation above may again be omitted. 
Indeed, if $\theta(x_0)<0$ for some $x_0\in\R$, then $F(x_0)\ge 0>\theta(x_0)$ for every $F\in\M$, 
whence $Q_\La^+(F)\le x_0$. As $\La$ is decreasing and not constant, $\La(\infty)<\La(-\infty)\le 1$, 
so we may pick $y>x_0$ with $\La<1$ on $[y,\infty)$; positivity of $\La$ then yields 
$Q_\La^+(B_{y,0})=y>x_0$, a contradiction.
\end{proof}

The following proposition provides a kind of converse of Theorem \ref{thm:MW-1} above. 

\begin{proposition}[Closure of $\La$-quantiles under mixing]
\label{prop:clos-mix}
Let $\Lambda_0\colon\R\to\R$ have bounded variation, $\lambda\in(0,1]$ and $G\in\M$ be such that 
\begin{equation}\label{eq:mixturecond}(1-\lambda)G\le\Lambda_0\le\lambda+(1-\lambda)G.\end{equation}
Then $\La:=\tfrac1\lambda\big(\Lambda_0-(1-\lambda)G\big)$ is $[0,1]$-valued, has bounded 
variation, and it holds that
\begin{equation}\label{eq:closure-gen}
Q_{\Lambda_0}^{\pm}\big(\lambda F+(1-\lambda)G\big)=Q_\La^{\pm}(F), \qquad F \in \M.
\end{equation}
Moreover, if $\Lambda_0$ is right lower semicontinuous, then so is $\La$. 
\end{proposition}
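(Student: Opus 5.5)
The proof is a direct algebraic computation, essentially the argument of Theorem~\ref{thm:MW-1} run backwards. I would establish the three claims in the order in which they are stated.

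\emph{Range and bounded variation.} Subtracting $(1-\lambda)G$ from all three members of \eqref{eq:mixturecond} and dividing by $\lambda>0$ gives $0\le\La\le 1$ at once. For bounded variation, $G$ is increasing and bounded, hence of bounded variation. The set of functions of bounded variation is a vector space, and the variation of a sum is at most the sum of the variations. So $\La=\tfrac1\lambda\Lambda_0-\tfrac{1-\lambda}{\lambda}G$ has bounded variation; if one prefers to cite the appendix, Lemma~\ref{lem:BV} covers this.

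\emph{Identity \eqref{eq:closure-gen}.} Fix $F\in\M$ and $x\in\R$. Since $\lambda>0$, we have
\[
\lambda F(x)+(1-\lambda)G(x)>\Lambda_0(x)\iff F(x)>\tfrac1\lambda\big(\Lambda_0(x)-(1-\lambda)G(x)\big)=\La(x).
\]
The same equivalence holds with ``$\ge$'' in place of ``$>$''. Hence the sets $\{\lambda F+(1-\lambda)G>\Lambda_0\}$ and $\{F>\La\}$ coincide, and so do the corresponding sets with ``$\ge$''. Taking infima gives both the right and the left versions of \eqref{eq:closure-gen}. One small point needs a remark: Definition~\ref{def:La-Q} is stated for $[0,1]$-valued parameters, while here $\Lambda_0$ is only assumed real-valued. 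However, \eqref{eq:mixturecond} forces $0\le\Lambda_0\le1$, so the left-hand side is a genuine $\Lambda_0$-quantile. In any case the defining infimum makes sense for any real function.

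\emph{Right lower semicontinuity.} This is the only step needing a limit argument, though it is still mild. Let $x\in\R$. Since $G$ is a CDF, it is right-continuous, so $G(y)\to G(x)$ as $y\downarrow x$. The identity $\liminf(f+g)=\liminf f+\lim g$ holds whenever $g$ converges, and positive scaling preserves $\liminf$. Therefore
\[
\liminf_{y\downarrow x}\La(y)=\tfrac1\lambda\Big(\liminf_{y\downarrow x}\Lambda_0(y)-(1-\lambda)G(x)\Big)\ge\tfrac1\lambda\big(\Lambda_0(x)-(1-\lambda)G(x)\big)=\La(x).
\]
Here the inequality uses the right lower semicontinuity of $\Lambda_0$.

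The only possible obstacle is this last step, and it is harmless precisely because $G$ is right-continuous, not merely increasing. Everything else is pointwise algebra that relies on $\lambda>0$.
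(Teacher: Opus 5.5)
Your proposal is correct and follows the paper's proof essentially step by step. The range follows from \eqref{eq:mixturecond}, bounded variation from $\La$ being a linear combination of $\Lambda_0$ and $G$, the identity \eqref{eq:closure-gen} from $\{\lambda F+(1-\lambda)G\ge\Lambda_0\}=\{F\ge\La\}$ (and likewise with strict inequality) using $\lambda>0$, and right lower semicontinuity from the right-continuity of $G$; the only quibble is that Lemma~\ref{lem:BV} does not itself state closure of bounded variation under linear combinations, but your direct triangle-inequality argument already covers this.
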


\begin{proof}
By \eqref{eq:mixturecond} we have $\lambda\La=\Lambda_0-(1-\lambda)G\ge 0$ and 
$\lambda\La\le\lambda$, so that $\La$ takes values in $[0,1]$. Being a linear combination 
of two functions of bounded variation, $\La$ has bounded variation as well, with 
$\TV(\La)\le\tfrac1\lambda\big(\TV(\Lambda_0)+1-\lambda\big)$.
Fix now $F\in\M$ and set $H:=\lambda F+(1-\lambda)G$, which again belongs to $\M$. 
Since $\lambda>0$, for every $x\in\R$
\[
H(x)\ge\Lambda_0(x)\quiff \lambda F(x)\ge\Lambda_0(x)-(1-\lambda)G(x)\quiff F(x)\ge\La(x),
\]
and the same chain of equivalences holds with ``$\ge$'' replaced by ``$>$''. Consequently 
\[
\{H\ge\Lambda_0\}=\{F\ge\La\}\and\{H>\Lambda_0\}=\{F>\La\},
\]
and taking infima of these sets, with the convention $\inf\varnothing=\infty$, delivers 
\eqref{eq:closure-gen} for $Q^-$ and for $Q^+$ respectively, both sides being understood 
as elements of $[-\infty,\infty]$.

Finally, suppose that $\Lambda_0$ is right lower semicontinuous and let $x\in\R$. 
As $G$ is right-continuous, $\lim_{y\downarrow x}(1-\lambda)G(y)=(1-\lambda)G(x)$, from which
\[
\liminf_{y\downarrow x}\lambda\La(y)
=\liminf_{y\downarrow x}\Lambda_0(y)-(1-\lambda)G(x)
\ge\Lambda_0(x)-(1-\lambda)G(x)=\lambda\La(x).
\]
Dividing by $\lambda>0$ shows that $\La$ is right lower semicontinuous. 
\end{proof}

Thus mixing the argument $F$ with a fixed distribution $G$ at a fixed weight maps the 
class of $\La$-quantiles with $\La$ of bounded variation and right lower semicontinuous 
into itself, provided \eqref{eq:mixturecond} holds. The induced parameter $\La$ 
inherits the decreasing behaviour of $-(1-\lambda)G$, so it need not be monotone even 
when $\La_0$ is. This gives a second reason to admit non-monotone $\La$, alongside the one 
from minimum aggregation in Section~\ref{sec:aggr}, where a pointwise minimum of 
monotone functions need not be monotone.

\section{Ordinal covariance}\label{sec:OC}
In this section, we consider functionals defined both on the set $\M$ and on the space $\CL^0$ of all real-valued random variables over an underlying atomless probability space $(\Omega,\CF,\P)$. If the functional $\rho\colon \CL^0\to\R$ in question is {\em law invariant}, i.e., $\rho(X)=\rho(Y)$ whenever the CDFs of $X$ and $Y$ under $\P$ coincide, these two perspectives are perfectly interchangeable. Indeed, each $\rho\colon\M\to\R$ defines a law-invariant functional $\w\rho\colon\CL^0\to\R$ by $\w\rho(X):=\rho(F^\P_X)$, where $F^\P_X:=\P(X\le \cdot)$ is the CDF of $X$ under $\P$. Conversely, each law-invariant functional $\rho\colon\CL^0\to\R$ induces a functional $\w\rho(G):=\rho(X_G)$, $G\in\mathcal M$, in a well-defined manner; here, $X_F$ is an arbitrary random variable with $F^{\P}_{X_F}=F$.

An important property of classical quantiles is their {\em ordinal covariance}. 
We recall its definition for a general functional $\rho \colon \CL^0 \to \R$ following, e.g., \cite{C09}.
Throughout, we set
$$
\Scal: = \{ g \colon \R \to \R \text { such that } g \text { is strictly increasing, continuous and surjective} \}.
$$
Algebraically, we view $(\Scal, \circ)$ as the group of orientation-preserving homeomorphisms of $\R$ with the operation of composition. 
\begin{definition}
Let $\rho \colon \CL^0 \to \R$. Then $\rho$ is \emph{ordinal covariant} if 
$$
\rho(g(X))=g(\rho(X)), \text { for all } g \in \Scal. 
$$ 
\end{definition}

This is a very strong property, that requires that the functional $\rho$ commutes with any strictly increasing and continuous bijection of $\R$ onto itself, including the nonlinear ones. It is satisfied by the usual quantiles, and indeed captures one of the facets of the ordinal nature of the usual quantiles. 
Remarkably, \cite{FLW23} noticed that the usual quantiles satisfy an even stronger ordinal covariance property, in which the group $\Scal$ is replaced by the monoid of increasing and left-continuous, but not necessarily surjective transformations. Both ordinal covariance properties lead to remarkable axiomatisations of the usual quantiles respectively in \cite{C09} and \cite{FLW23}.

In this section we adopt an alternative point of view, regarding ordinal covariance not as an all-or-nothing property as above, but as a property that a functional can possess \emph{with a continuum of degrees}, which we aim to describe and compare.
We start with the following fundamental definition. 

\begin{definition}[Ordinal covariance group of a functional]
Let $\rho \colon \CL^0 \to \R$ be any functional. We define its \emph{ordinal covariance group} by 
\begin{equation}
\label{eq:G-rho}
\G(\rho):= \big\{ g \in \Scal \mid \rho(g(X))=g(\rho(X)), \text { for all } X \in \CL^0 \big \}.
\end{equation}
\end{definition}

\begin{lemma}\label{lem:group}
The set $ \G(\rho)$ defined in \eqref{eq:G-rho} is a subgroup of $\Scal$.  
\end{lemma}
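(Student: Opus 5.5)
The plan is to verify the three subgroup axioms directly inside the group $(\Scal,\circ)$: that $\G(\rho)$ contains the identity, is closed under composition, and is closed under inversion. Throughout, I use that for $g\in\Scal$ and $X\in\CL^0$, $g(X)=g\circ X$ is again a real-valued random variable. This holds because $g$ is continuous, hence Borel measurable. Consequently $g(X)\in\CL^0$, and the defining identity in \eqref{eq:G-rho} makes sense for all such compositions.

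First, the identity map $\mathrm{id}\in\Scal$ trivially satisfies $\rho(\mathrm{id}(X))=\rho(X)=\mathrm{id}(\rho(X))$, so $\mathrm{id}\in\G(\rho)$.

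Next, for closure under composition, take $g,h\in\G(\rho)$ and $X\in\CL^0$. Apply the defining property of $g$ to the random variable $h(X)\in\CL^0$, and then the property of $h$ to $X$:
\[
\rho\big((g\circ h)(X)\big)=\rho\big(g(h(X))\big)=g\big(\rho(h(X))\big)=g\big(h(\rho(X))\big)=(g\circ h)(\rho(X)).
\]
Since $\Scal$ is closed under composition, this gives $g\circ h\in\G(\rho)$.

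Finally, for inverses, let $g\in\G(\rho)$. Because $g$ is a strictly increasing continuous bijection of $\R$, its inverse $g^{-1}$ is again strictly increasing, continuous and surjective, so $g^{-1}\in\Scal$. Given $X\in\CL^0$, set $Y:=g^{-1}(X)\in\CL^0$. Applying the property of $g$ to $Y$ gives $\rho(X)=\rho(g(Y))=g(\rho(Y))$. Applying $g^{-1}$ to both sides yields $\rho(g^{-1}(X))=g^{-1}(\rho(X))$, so $g^{-1}\in\G(\rho)$.

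The only step requiring any care is this inverse step. It relies on $\rho(Y)$ being real-valued, so that $g^{-1}$ can be applied to it, and on $g$ being a bijection, so that every $X$ arises as $g(Y)$. Both hold here: $\rho$ maps into $\R$, and surjectivity of $g$ is part of the definition of $\Scal$. Thus I expect no genuine obstacle in this lemma.
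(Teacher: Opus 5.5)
Your proposal is correct and follows essentially the same route as the paper: you check the identity, prove closure under composition by applying the defining property to $h(X)$, and prove closure under inversion by writing $X=g(g^{-1}(X))$ and then applying $g^{-1}$. Your added remarks on Borel measurability of $g(X)$ and on $\rho$ being real-valued are harmless clarifications that the paper leaves implicit.
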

\begin{proof}
Clearly, for any $\rho \colon \CL^0 \to \R$ it holds that $\mathrm{id}_\R \in \G(\rho)$, so  $\G(\rho) \neq \varnothing$. 
For $g,h\in\mathcal G(\rho)$ and $X\in\mathcal L^0$, we have 
\[ \rho\big((g\circ h)(X)\big)=g\big(\rho(h(X))\big)=(g\circ h)(\rho(X)),\]
which means that $g\circ h\in \mathcal G(\rho)$. 
Also, for $g\in\mathcal G(\rho)$, the inverse $g^{-1}$ is defined on all of $\R$ by surjectivity, and is strictly increasing and continuous. 
    Moreover, we have for all $X\in\mathcal L^0$ by definition that 
$
g^{-1}\big(\rho(X)\big)=g^{-1}\left(\rho\big((g\circ g^{-1})(X)\big)\right)=(g^{-1}\circ g)\left(\rho\big(g^{-1}(X)\big)\right)=\rho\big(g^{-1}(X)\big)$, hence $g^{-1}\in\G(\rho)$.
\end{proof}
The following example identifies the ordinal covariance groups of some well-known risk measures. For the sake of simplicity, we consider them on the space $\CL^\infty$ of all bounded random variables. All definitions relevant to this section directly carry over to this smaller space.

\begin{example}[Ordinal covariance groups of risk measures]\label{ex:OCGs}
Let $\E$ be the mean, $\mathrm{SD}$ the standard deviation, the entropic risk measure
$\rho_\gamma(X)=\tfrac1\gamma\log\E[e^{\gamma X}]$ ($\gamma\neq0$), and the worst-case risk measure given by $\rho^{\mathrm w}=Q_1^-$. Then
\[
\mathcal G(\E)=\mathrm{Aff}^+,\qquad
\mathcal G(\mathrm{SD})=\mathcal D,\qquad
\mathcal G(\rho_\gamma)=\mathcal T,\and\mathcal G(\rho^{\mathrm w})=\Scal,
\]
where
$
\mathcal T:=\{x\mapsto x+b\mid b\in\R\}$, $\mathcal D:=\{x\mapsto ax\mid a>0\}$,
$\mathrm{Aff}^+:=\{x\mapsto ax+b\mid a>0,\ b\in\R\}$
are respectively the translation, dilation, and orientation-preserving affine subgroups of $\Scal$. More generally, for coherent risk measures it holds that $\G(\rho) \supseteq \mathrm{Aff}^+$. 
The proof is postponed to Appendix~\ref{sec:example}. 
\end{example}

We will see below that these ordinal covariance groups are very different from those of $\La$-quantiles. The groups above act on
$\R$ with dense or transitive orbits and without any fixed
point, with the exception of $0$ in the case of standard deviation. In order to describe the ordinal covariance groups of $\La$-quantiles we give the following definition. 

\begin{definition}[Invariance group of $\La$]

Let $\La \colon \R \to [0,1]$. We define its \emph{invariance group} by
\begin{equation}
\G(\La):= \{g \in \Scal \mid \Lambda\circ g=\Lambda\}.  
\end{equation}
\end{definition}
As in the proof of Lemma \ref{lem:group}, it is straightforward to check that $\G(\La)$ is a subgroup of $\Scal$. \\
The following example shows three prototypical cases. 

\begin{example}[Invariance groups of elementary $\La$]\label{ex:GLambda} We start by introducing the following notation. 
For $A\subseteq\R$ we set $\Scal_A:=\{g\in\Scal\mid g|_A=\mathrm{id}_A\}$, that is the pointwise
stabiliser of $A$ in $\Scal$. 
\begin{enumerate}[label=(\alph*)]
\item If $\La\equiv\lambda\in[0,1]$ is constant, then $\G(\La)=\Scal=\Scal_\varnothing$.
Indeed, if $\La\equiv\lambda$ then $\La\circ g=\lambda=\La$ for every
$g\in\Scal$, so $\G(\La)=\Scal$.
\item Let $-\infty=x_0<x_1<\dots<x_n<x_{n+1}=\infty$ and let
$\lambda_1,\dots,\lambda_{n+1}\in[0,1]$ satisfy $\lambda_i\neq\lambda_{i+1}$ for all $i$.
If $\La|_{(x_{i-1},x_i)}\equiv\lambda_i$ for $1\le i\le n+1$, i.e., $\La$ is locally
constant with jumps exactly at $x_1,\dots,x_n$, then
\[
\G(\La)=\Scal_{\{x_1,\dots,x_n\}}=\{g\in\Scal\mid g(x_i)=x_i,\ i=1,\dots,n\}.
\]
To verify this claim, suppose first that $g(x_i)=x_i$ for all $i$. 
In particular, $g$ maps each interval
$(x_{i-1},x_i)$ onto itself, on which $\La$ is constant; hence $\La\circ g=\La$ and
$\Scal_{\{x_1,\dots,x_n\}}\subseteq\G(\La)$. Conversely, let $g\in\Scal$ with
$g(x_i)\neq x_i$ for some $i$. By Lemma~\ref{lem:group}, replacing $g$ by $g^{-1}$ if
necessary, we may assume $g(x_i)>x_i$. Choosing $\delta>0$ small enough that
$g(x_i-\delta)>x_i$ as well,
\[
\La(x_i-\delta)=\lambda_i\neq\lambda_{i+1}=\La\big(g(x_i-\delta)\big),
\]
so $\La\circ g\neq\La$ and $g\notin\G(\La)$. Hence $\G(\La)=\Scal_{\{x_1,\dots,x_n\}}$.
\item If $\La$ is strictly monotone, then $\G(\La)=\{\mathrm{id}_\R\}=\Scal_\R$.
Indeed, a strictly monotone $\La$ is injective, so $\La\circ g=\La$
forces $g=\mathrm{id}_\R$; thus $\G(\La)=\{\mathrm{id}_\R\}$.
\end{enumerate}
\end{example}

The first main result of this section is that under minimal additional conditions on $\La$, the ordinal covariance group of a $\La$-quantile coincides with the invariance group of $\La$.  

\begin{theorem}\label{thm:ordinal}
Let $\rho = Q_\La^-$. If $\La$ is in standard representation, satisfies the attainment condition and leads to a finite left-quantile, then 
\[
\mathcal G(\rho)=\mathcal G(\La).
\]
\end{theorem}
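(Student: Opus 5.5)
We prove the two inclusions separately, working on $\M$: by law invariance, for $g\in\Scal$ the CDF of $g(X)$ is $F\circ g^{-1}$ when $F$ is the CDF of $X$. The condition $\rho(g(X))=g(\rho(X))$ for all $X$ therefore reads
\[
Q_\La^-(F\circ g^{-1})=g\big(Q_\La^-(F)\big)\quad\text{for all }F\in\M .
\]

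\emph{Inclusion $\G(\La)\subseteq\G(\rho)$.} This is a direct set computation. Since $g$ is an increasing bijection, $y=g(x)$ runs over $\R$, and
\[
\{y\mid F(g^{-1}(y))\ge \La(y)\}=g\big(\{x\mid F(x)\ge \La(g(x))\}\big).
\]
If $\La\circ g=\La$, the right-hand side equals $g(\{F\ge\La\})$. Because $g$ is an increasing homeomorphism, it commutes with infima, and finiteness of the left quantile ensures the infimum is a real number to which $g$ applies. Hence $Q_\La^-(F\circ g^{-1})=g(Q_\La^-(F))$. This direction uses only finiteness.

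\emph{Inclusion $\G(\rho)\subseteq\G(\La)$.} The same computation shows that for any $g\in\Scal$,
\[
Q_\La^-(F\circ g^{-1})=g\big(Q_{\La\circ g}^-(F)\big).
\]
So $g\in\G(\rho)$ means exactly $Q^-_{\La\circ g}=Q^-_\La$ on $\M$. We then want to invoke the identifiability result of Section~\ref{sec:identify}. By Lemma~\ref{lem:find N0} and Proposition~\ref{prop:geq}, within the class of right lower semicontinuous $\La$ in standard representation, $Q_\La^-$ determines $\La$. It therefore suffices to check that $\La\circ g$ lies in this class:
\begin{enumerate}[label=(\roman*)]
\item Right lower semicontinuity is preserved, since $g$ is continuous and strictly increasing, so $y\downarrow x$ iff $g(y)\downarrow g(x)$.
\item Standard representation is preserved, because $\{\La\circ g=0\}=g^{-1}(\{\La=0\})$, the set $\{\La=0\}$ is an upper half-line (closed or open), and $g^{-1}$ maps such half-lines to half-lines of the same type. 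The new threshold is $N_0'=g^{-1}(N_0)$.
\end{enumerate}
Proposition~\ref{prop:geq}(c) then gives $\La(x)=\inf\{F(x)\mid Q_\La^-(F)=x\}$ for $x\le N_0$, and the same formula holds for $\La\circ g$ for $x\le N_0'$. Lemma~\ref{lem:find N0} yields $N_0=N_0'$, since both are computed from the common quantile. Hence $\La\circ g=\La$ on $(-\infty,N_0]$, and both vanish on $(N_0,\infty)$.

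The main obstacle is this reverse inclusion, specifically verifying that $\La\circ g$ stays in the identifiable class. The delicate cases are the boundary ones. When $N_0=\infty$, we must check that $N_0'=\infty$ as well. When $\La\equiv0$, $Q^-_\La\equiv-\infty$, which is excluded by the finiteness assumption; finiteness also keeps $g(\rho(X))$ well defined throughout.
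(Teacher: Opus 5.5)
Your proof is correct, but for the reverse inclusion it takes a different route from the paper.

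\textbf{The paper's route.} The paper argues directly, in two steps. First, it uses the point masses $B_{x,0}$ to show that every $g\in\G(\rho)$ maps $(-\infty,N_0]$ and $(N_0,\infty)$ onto themselves. Second, it assumes $\La(g(x))>\La(x)$ for some $x\le N_0$, passing to $g^{-1}$ if necessary, and tests the single distribution $F=B_{x,1-\La(x)}$. Covariance forces $\rho(F\circ g^{-1})=g(x)$. But $(F\circ g^{-1})(g(x))=\La(x)<\La(g(x))$, which violates the attainment property of Lemma~\ref{lem:attain}.

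\textbf{Your route.} You use the conjugation identity $Q_\La^-(F\circ g^{-1})=g\big(Q^-_{\La\circ g}(F)\big)$ to recast $g\in\G(\rho)$ as $Q^-_{\La\circ g}=Q^-_\La$. You then check that the identifiable class is stable under $\La\mapsto\La\circ g$, and invoke Lemma~\ref{lem:find N0} and Proposition~\ref{prop:geq}. Two small points:
\begin{itemize}
\item The theorem assumes the attainment condition, not right lower semicontinuity. You should cite Lemma~\ref{lem:attain} (under standard representation) to convert one into the other before applying Proposition~\ref{prop:geq} and checking invariance under $g$.
\item The preservation of $N_0$, which the paper proves by hand, comes out automatically in your argument as $g^{-1}(N_0)=N_0'=N_0$. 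So the edge case $N_0=\infty$ that you flag needs no separate treatment.
\end{itemize}

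\textbf{What each buys.} The paper's argument is more elementary and self-contained, and it exhibits an explicit witness distribution. Yours is more structural: it shows that $\G(\rho)$ is exactly the stabiliser of $Q_\La^-$ under the action $\La\mapsto\La\circ g$, so the theorem becomes a corollary of identifiability within any $g$-stable identifiable class. For instance, the same argument with Proposition~\ref{prop:geq-2} in place of Proposition~\ref{prop:geq} yields the analogous statement for $Q_\La^+$, for right lower semicontinuous $\La$ with real-valued right quantile, with no standard-representation assumption.
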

\begin{proof}
    Let $g\in \G(\La)$. For all $F\in\mathcal M$, 
    \begin{align*} \rho(F\circ g^{-1})&=\inf\{x\in\R\mid F(g^{-1}(x))\ge \La(x)\}\\
    &=\inf\{g(x)\mid x\in\R,~F(x)\ge \La(g(x))\}\\
    &=\inf\{g(x)\mid x\in\R,~F(x)\ge \La(x)\}=g(\rho(F)),\end{align*}
showing that $g \in \G(\rho)$. 

For the converse implication, we claim that $N_0<\infty$ entails for all $g\in\mathcal G(\rho)$ that
$g((-\infty,N_0])=(-\infty,N_0]$ and $g((N_0,\infty))=(N_0,\infty)$. 
Indeed, let $x\le N_0$ and $B_{x,0}\in\M$ be defined by \eqref{eq:Bernoulli}, and compute
\[\rho(B_{g(x),0})=\rho(B_{x,0}\circ g^{-1})=g(\rho(B_{x,0}))=g(x).\]
As $\rho(B_{y,0})=N_0$ whenever $y>N_0$, we conclude $g((-\infty,N_0])\subseteq(-\infty,N_0]$.
Moreover, $\mathcal G(\rho)$ being a group implies that $g((N_0,\infty))\subseteq(N_0,\infty)$. The claimed identities follow from surjectivity of $g$.
    
Now assume towards a contradiction that we find $g\in\mathcal G(\rho)\setminus \mathcal G(\La)$, i.e., for some $x\in\R$, $\La(g(x))\neq \La(x)$, implying that $g(x)\neq x$ and $x\le N_0$. 
Since $\G(\rho)$ and $\G(\La)$ are subgroups of $\Scal$, also $g^{-1}\in\mathcal G(\rho)\setminus \mathcal G(\La)$, so we can assume without loss of generality that  $\La(g(x))>\La(x)$ and that $\La>0$ on $(-\infty,x)$. 
Let $F:=B_{x,1-\Lambda(x)}$, resulting in $\rho(F)=x$. 
Since $g \in \G(\rho)$,
    $\rho(F\circ g^{-1})=g(x)$.
However, $F\circ g^{-1}=\Lambda(x)\ind_{[g(x),g(x+1))}+\ind_{[g(x+1),\infty)}$, which entails 
  \[\La(x)=(F\circ g^{-1})(g(x))<\La(g(x)),\]
contradicting $\rho(F\circ g^{-1})=g(x)$ by Lemma~\ref{lem:attain}.
\end{proof}

Note that the inclusion $\G(\La) \subseteq \G(\rho)$ holds in general, while the additional assumptions of standard representation and attainment condition are only needed for the reverse inclusion.

For $\La$-quantiles, the assessment of their ordinal covariance properties is thus often equivalent to the identification of the invariance group of $\La$. 
Example \ref{ex:GLambda} showed that in simple examples these groups consist of the elements of $\Scal$ fixing every point of some set $A \subset \R$, i.e., are the \emph{pointwise stabilisers} of $A$. The next example shows that this is not always the case. 

\begin{example}[An invariance group that is not a pointwise stabiliser]\label{ex:periodic}
Let
\[
\La(x)=\tfrac12+\tfrac13\sin(2\pi x)\in\big[\tfrac16,\tfrac56\big]\subset(0,1),\quad x\in\R.
\]
Due to the periodic nature of $\La$, the invariance group  
$\G(\La)$ coincides with $\mathcal T_{\mathbb{Z}}:=\{x\mapsto x+b:\ b\in\mathbb{Z}\}$,
which is not the pointwise stabiliser of any subset of $\R$.
\end{example}

In order to make further progress, we assume that $\La$ has bounded variation and is right-continuous.   
In this case, $\La$ defines a bounded signed measure $\mu_{\La}$ on $(\R, \mathcal{B}(\R))$ by setting
\begin{equation*}\label{eq:mu-la}
\mu_{\La}((-\infty,x]):=\La(x)-\La(-\infty);
\end{equation*}
see Lemma~\ref{lem:measure} in Appendix \ref{sec:app} for this and other useful properties of bounded variation functions. 
\begin{definition}[Measure-preserving transformation and $\G(\mu_\La)$]
Let $\La \colon \R \to [0,1]$ have bounded variation and be right-continuous, and let $\mu_\La$ be its associated signed measure as above. A measurable function $g \colon \R \to \R$ is a measure preserving transformation of $(\R, \mathcal{B}(\R), \mu_\La)$ if, for each $A \in \mathcal{B}(\R)$,  
        \begin{equation}
        \label{eq:meas-pres}
        \mu_\La\big( g^{-1}(A) \big) =\mu_\La (A).
       \end{equation}
We set
$
\G(\mu_\La)$ to be the set of all $g\in\Scal$ that are measure-preserving transformations of $(\R, \mathcal{B}(\R), \mu_\La)$.
\end{definition}

The next theorem shows that under the BV and right-continuity assumptions, the invariance group of $\La$ coincides with the group of its measure-preserving transformations, that in turn is the pointwise stabiliser of the support.

\begin{theorem}
\label{thm:MPT}
    Suppose $\La \colon \R \to [0,1] $ has bounded variation and is right-continuous. For $g \in \Scal$,
    the following are equivalent: 
    \begin{enumerate}[label=\tn{(\alph*)}]
        \item $g\in\mathcal G(\La)$. 
        \item $g \in \G(\mu_\La)$. 
        \item $g|_{\tn{supp}(\mu_\La)}=\mathrm{id}_{\tn{supp}(\mu_\La)}$.
    \end{enumerate}
\end{theorem}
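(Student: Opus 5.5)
Writing $S:=\supp(\mu_\La)$, the natural route is (a)$\Leftrightarrow$(b), then (c)$\Rightarrow$(a), and finally the hard implication (a)$\Rightarrow$(c).

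\textbf{(a)$\Leftrightarrow$(b).} Since $g\in\Scal$ is an increasing bijection, $g^{-1}((-\infty,x])=(-\infty,g^{-1}(x)]$. Hence condition \eqref{eq:meas-pres} on half-lines reads $\La(g^{-1}(x))-\La(-\infty)=\La(x)-\La(-\infty)$ for all $x$, i.e.\ $\La\circ g^{-1}=\La$. This is equivalent to $\La\circ g=\La$, because $\G(\La)$ is a group. Conversely, if $\La\circ g^{-1}=\La$, then $\mu_\La\circ g^{-1}$ and $\mu_\La$ are bounded signed measures agreeing on the $\pi$-system of half-lines. By uniqueness of signed measures determined by their distribution function (Lemma~\ref{lem:measure}, applied to the positive and negative parts or directly via the Jordan decomposition), they agree on $\mathcal B(\R)$. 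Thus (a) and (b) are equivalent.

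\textbf{(c)$\Rightarrow$(a).} The complement $\R\setminus S$ is open, so it is a countable union of disjoint open intervals $(\alpha,\beta)$ with endpoints in $S\cup\{\pm\infty\}$. Each such interval is $|\mu_\La|$-null, so $\La$ is constant on $[\alpha,\beta)$ by right-continuity (and on $(-\infty,\beta)$ if $\alpha=-\infty$). If $g$ fixes $S$ pointwise, it maps each such interval onto itself, since it fixes finite endpoints and is an increasing bijection. Hence $\La(g(x))=\La(x)$ for $x\notin S$, and trivially also for $x\in S$.

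\textbf{(a)$\Rightarrow$(c).} This is the main obstacle. Suppose $g\in\G(\La)$ and $g(x_0)\neq x_0$ for some $x_0\in S$; replacing $g$ by $g^{-1}$ if needed, assume $g(x_0)>x_0$. Let $(a,b)$ be the connected component of $\{g\neq\mathrm{id}\}$ containing $x_0$, with $a,b\in[-\infty,\infty]$. On $(a,b)$ we have $g(x)>x$, and $g$ maps $(a,b)$ onto itself. Pick $y\in(a,b)$ and set $I:=(y,g(y)]$. The sets $g^n(I)$, $n\in\mathbb Z$, are disjoint and tile $(a,b)$. By (b), $\mu_\La(g^{-n}(A))=\mu_\La(A)$, so the measure is invariant; the same holds for $|\mu_\La|$, since total variation is preserved under the measurable bijection $g$ (Hahn sets are mapped to Hahn sets). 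Then
\[
|\mu_\La|((a,b))=\sum_{n\in\mathbb Z}|\mu_\La|(g^n(I))=\sum_{n\in\mathbb Z}|\mu_\La|(I),
\]
which is finite because $\La$ has bounded variation. This forces $|\mu_\La|(I)=0$, and hence $|\mu_\La|((a,b))=0$. (This is exactly the wandering-set/Poincaré recurrence mechanism for a finite invariant measure.) Consequently $(a,b)$ is an open $|\mu_\La|$-null set containing $x_0$, contradicting $x_0\in S$. The delicate points are the invariance of $|\mu_\La|$ and the tiling claim: every orbit in $(a,b)$ is strictly increasing, and by continuity it tends to the fixed points $b$ and $a$, which gives the tiling.
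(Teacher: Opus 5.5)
Your proof is correct. For (a)$\Leftrightarrow$(b) and (c)$\Rightarrow$(a) it matches the paper: the paper gets (a)$\Rightarrow$(b) from Lemma~\ref{lem:measure}(d), which is the same half-line-plus-uniqueness argument you give. For your converse, the cleanest justification is to apply the uniqueness in Lemma~\ref{lem:measure}(a) directly to the finite signed measure $A\mapsto\mu_\La(g^{-1}(A))$, whose distribution function is $\La\circ g^{-1}-\La(-\infty)$, rather than to positive and negative parts separately.

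The genuine difference lies in the hard implication. The paper proves (b)$\Rightarrow$(c) by constructing a small open neighbourhood $O$ of the moved point that every $g^n$, $n\ge 1$, pushes strictly above $O$. It then invokes the Poincar\'e recurrence theorem for the finite invariant measure $|\mu_\La|$.

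You instead take the whole component $(a,b)$ of the open set $\{g\neq\mathrm{id}\}$ containing $x_0$. You show that $I=(y,g(y)]$ is a fundamental domain whose iterates $g^n(I)=(g^n(y),g^{n+1}(y)]$ tile $(a,b)$, since orbits are strictly monotone and can only accumulate at fixed points. Invariance and finiteness of $|\mu_\La|$ then force $|\mu_\La|(I)=0$. This is the wandering-set mechanism behind Poincar\'e recurrence, carried out by hand.

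What your route buys:
\begin{itemize}
\item It is self-contained and needs no external theorem.
\item It gives a stronger statement in one stroke. Applying it to every component (using $g^{-1}$ where $g<\mathrm{id}$) shows that $\{g\neq\mathrm{id}\}$ is an open $|\mu_\La|$-null set, so $\supp(\mu_\La)\subseteq\{g=\mathrm{id}\}$.
\item Your Hahn-set argument for the invariance of $|\mu_\La|$ works for any Borel automorphism, and is a clean substitute for Lemma~\ref{lem:measure}(e).
\end{itemize}

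What the paper's route buys: it is shorter and purely local, with no need to identify the component or the limits of orbits. It is also deliberately phrased in ergodic-theoretic terms, which the authors use to point towards further work.
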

\begin{proof}
(c) implies (a): We want to show that for all $x \in \R$, it holds that $\La(g(x))=\La(x)$. If $x \in \tn{supp}(\mu_\La)$ this is immediate from (c). If instead $x \notin \tn{supp}(\mu_\La)$, let $I$ be the connected component of $\R\setminus\tn{supp}(\mu_\La)$ containing $x$; by Lemma~\ref{lem:measure}(c), $I$ is an open interval on which $\La$ is constant. Each finite endpoint of $I$ lies in $\tn{supp}(\mu_\La)$ and is thus a fixed point of $g$. 
As $g$ is increasing and fixes $\tn{supp}(\mu_\La)$ pointwise, it maps $I$ onto itself, so $g(x)\in I$ and $\La(g(x))=\La(x)$.

(a) implies (b): This follows with Lemma~\ref{lem:measure}(d).

(b) implies (c): Assume by contradiction that there exists $x \in \supp (\mu_{\La})$ such that $g(x) \neq x$. Without loss of generality, we may assume $g(x)>x$. 
Set $\delta:=\frac 1 2(g(x)-x)$ as well as 
\[
O:=g^{-1}\big((g(x)-\delta,\infty)\big)\cap (x-\delta,x+\delta),  
\]
an open neighbourhood of $x$. As $x\in\tn{supp}(\mu_\La)$, 
$\vert \mu_\La \vert (O)>0$. By Lemma~\ref{lem:measure}(e), $g$ also preserves the finite positive measure $|\mu_\La|$.
By construction and monotonicity of $g$, $g^n(y)\ge g(y)>g(x)-\delta\ge\sup O$ holds for all $y\in O$ and $n\ge 1$, so no point of $O$ ever returns to $O$. 
However, the latter is in direct contradiction to the Poincar\'e Recurrence Theorem~\cite[Theorem 20.3]{Ali} applied to $(\R,\mathcal B(\R),|\mu_\La|,g)$.
\end{proof}

Looking back at Example \ref{ex:GLambda}, we see that 
a constant $\La$ has $\mu_\La=0$ and empty support, giving the full group $\Scal$ and
recovering the classical quantiles; a strictly monotone $\La$ has
$\operatorname{supp}(\mu_\La)=\R$, giving the trivial group; the multilevel case lies in
between, with $\operatorname{supp}(\mu_\La)=\{x_1,\dots,x_n\}$ and a group as large as the
finitely many thresholds allow. In this sense we say that the size of $\G(\La)$ measures how far
$Q_\La^-$ departs from a classical quantile.
Notice that Example \ref{ex:periodic} shows that the conclusion is false without the assumption of bounded variation of $\La$. 

Summing up, 
the results of this section recast ordinal covariance from a binary property into a
group-valued invariant, giving a precise mathematical content to the  ``continuum of degrees'' anticipated above. By Theorem~\ref{thm:ordinal} the
ordinal covariance group $\G(Q_\La^-)$ coincides with the invariance group $\G(\La)$, and,
whenever $\La$ has bounded variation and is right-continuous, Theorem~\ref{thm:MPT} shows
that the latter is exactly the pointwise stabiliser $\Scal_{\tn{supp}(\mu_\La)}$ of the
support of the associated signed measure. The degree of ordinal covariance of a
$\La$-quantile is thus encoded in a single closed set, ranging from the full group $\Scal$
of the classical quantiles (empty support) to the trivial group of a strictly monotone
$\La$ (full support).
The measure-preserving viewpoint
of Theorem~\ref{thm:MPT} suggests the application of
tools from ergodic theory, a direction that we leave to future work.

\section{Conclusions and directions for further research}\label{sec:concl}

We have developed a systematic analysis of $\La$-quantiles under minimal assumptions
on the functional parameter $\La$, removing the monotonicity assumption that is
standard in the literature. Two findings give \emph{a posteriori} an intrinsic
motivation to this generality. The first is that it is right lower semicontinuity of $\La$, not monotonicity, the property that drives most of our arguments. The second is that
closing the class of $\La$-quantiles under minimum aggregation, or under mixing the
argument with a fixed distribution at a fixed weight, forces in both cases the inclusion of
non-monotone $\La$. 

In addition to this,  
our main results turn several previously known sufficient conditions
into equivalences --- for weak semicontinuity and weak continuity of both quantiles,
and for the convex level set property --- and identify the ordinal covariance group
of a $\La$-quantile with the invariance group of $\La$, which in the
bounded-variation case is the pointwise stabiliser of the support of the associated
signed measure, leading to interesting and nontrivial connections with ergodic theory. 

The ordinal point of view suggests decision-theoretic applications of
$\La$-quantiles in the framework of ordinal preferences as in \cite{R10}. It also
makes it natural to attempt to axiomatise subclasses of $\La$-quantiles through
their ordinal covariance groups, recovering the classical quantiles of
\cite{C09,FLW23} when the group is the whole of $\Scal$ and admitting richer
families as it shrinks; ongoing work of the authors pursues this programme for
bilevel and multilevel quantiles. A first step in this direction is the
realisability question: which subgroups of $\Scal$ arise as the ordinal covariance
group of some law-invariant functional? Theorem~\ref{thm:MPT} answers this for $\La$
of bounded variation, while Example~\ref{ex:periodic} shows that the picture beyond
bounded variation is richer.

Finally, the mixture representation of Section~\ref{sec:MR} makes the
numerical schemes developed in \cite{PW2026} for monotone $\La$ available in the
general case, and suggests a general way to deal with non-monotone $\La$-quantiles in portfolio optimization or risk management applications. 

\appendix

\section{Functions of bounded variation on \texorpdfstring{$\R$}{R}}
\label{sec:app}

A function $f\colon \R\to\R$ has bounded variation if 
\[\TV(f):=\sup\Big\{\sum_{i=1}^n|f(x_i)-f(x_{i-1})|\,\Big|\,n\in\N,\,x_0<x_1<...<x_n\Big\}<\infty.\]
A good reference for the properties of bounded variation functions defined on all of $\R$ is \cite{Fol99}. For completeness, we recall in the next lemma the properties that are needed. 

\begin{lemma}\label{lem:BV}
Suppose $\Lambda\colon\R\to\R$ has bounded variation. 
\begin{enumerate}[label=\tn{(\alph*)}]
    \item For each $x\in\R\cup\{-\infty\}$, the right limit $\La(x+)$ exists and is finite.
    \item For each $x\in\R\cup\{\infty\}$, the left limit $\La(x-)$ exists and is finite. 
    \item There exist two increasing and bounded functions $G^+,G^- \colon \R \to [0, \infty)$ satisfying \\
    $G^+(-\infty)=0$, $G^-(-\infty)=0$ such that  
\begin{equation}  
\label{eq:mon-diff}
\Lambda(x) =\La(-\infty) +G^+(x)-G^-(x).  
\end{equation}
    \item If $\Lambda$ is right-continuous, then $G^+$ and $G^-$ in the decomposition \eqref{eq:mon-diff} can be chosen to be right-continuous.

\item If $\La$ satisfies $\La(x)\le \La(x+)$ for all $x\in\R$, then in the decomposition \eqref{eq:mon-diff} $G^-$ can be chosen to be right-continuous.
\end{enumerate}
\end{lemma}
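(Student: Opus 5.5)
We prove the five items in order. The tools are the classical Jordan decomposition through the positive and negative variation functions (as in \cite{Fol99}) and a small modification of it for (e).

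For (a) and (b) we first fix $x_0$ and define $T(x):=\TV(\La|_{(-\infty,x]})$. This is increasing and bounded by $\TV(\La)$, so $T(-\infty)$ and $T(\infty)$ exist. The Cauchy criterion then gives the one-sided limits of $\La$. For instance, if $y<z<x$, then $|\La(z)-\La(y)|\le T(z)-T(y)$, and the right-hand side tends to $0$ as $y,z\downarrow$ or $\uparrow$ any point, including $\pm\infty$. Finiteness follows from boundedness, since $|\La(x)-\La(y)|\le\TV(\La)$.

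For (c) we set
\[P(x):=\sup\Big\{\sum_i(\La(x_i)-\La(x_{i-1}))_+\Big\},\qquad N(x):=\sup\Big\{\sum_i(\La(x_i)-\La(x_{i-1}))_-\Big\},\]
where the suprema run over partitions $x_0<\dots<x_n\le x$. The standard argument gives $\La(x)-\La(x_0)=P(x)-N(x)+P(x_0)$-type identities, and letting the left endpoint tend to $-\infty$ via (a) yields
\[\La(x)=\La(-\infty)+P(x)-N(x).\]
Both $P$ and $N$ are increasing, bounded by $\TV(\La)$, and vanish at $-\infty$. We then put $G^+:=P$ and $G^-:=N$.

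For (d) we use the classical fact that if $\La$ is right-continuous, then so is $T$, and hence so are $P=\tfrac12(T-T(-\infty)+\La-\La(-\infty))$ and $N$. Right-continuity of $T$ is proved as usual: if $T(x+)-T(x)>\eps$, a fine partition just to the right of $x$ can be refined to exhibit variation at least $\eps$ on $(x,x+\delta)$ for every $\delta$. Because $\La(x+)=\La(x)$, this can be iterated to produce infinite total variation, which is a contradiction.

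For (e), the main obstacle, we no longer have right-continuity. We therefore modify the Jordan decomposition by pushing all downward jumps from the right into a right-continuous part. Define the right-continuous version $\tilde\La(x):=\La(x+)$; by (a) it has bounded variation. By (d), $\tilde\La=\La(-\infty)+\tilde G^+-\tilde G^-$ with both parts right-continuous. The defect is $D:=\La-\tilde\La$, which satisfies $D\le0$ by hypothesis and is nonzero only on the at most countable set of discontinuities of $\La$. Its sum satisfies $\sum_x|D(x)|\le\TV(\La)<\infty$, since each such jump contributes twice to the variation. We then set $G^-:=\tilde G^-$ and $G^+:=\tilde G^++D$. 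Clearly $G^-$ is right-continuous, increasing and bounded, and \eqref{eq:mon-diff} holds. It remains to check that $G^+$ is increasing, nonnegative and vanishes at $-\infty$; the last two follow from $D(x)\to0$ as $x\to-\infty$ by summability.

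Monotonicity of $\tilde G^++D$ is the delicate point and need not hold for an arbitrary choice of $\tilde G^+$. To obtain it, we replace $\tilde G^+$ by
\[\tilde G^+(x)+\sum_{y\le x}|D(y)|\]
and correspondingly define $G^+:=\tilde G^++\sum_{y\le x}|D(y)|+D(x)$. This is increasing: the term $D(x)=-|D(x)|$ cancels the atom at $x$ added by the sum, so what remains is the sum over $y<x$. More precisely, the function $x\mapsto\sum_{y<x}|D(y)|$ is increasing, and $G^+$ equals $\tilde G^+$ plus this function. With this choice \eqref{eq:mon-diff} requires $\La(-\infty)+G^+-G^-=\tilde\La+\sum_{y<x}|D(y)|$, which equals $\La$ only if the added sum is absorbed. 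We therefore also add $\sum_{y<x}|D(y)|$ to $G^-$. The function $x\mapsto\sum_{y<x}|D(y)|$ is left-continuous, not right-continuous, so this fails. We fix this by instead adding the right-continuous function $S(x):=\sum_{y\le x}|D(y)|$ to both parts. Then $G^-=\tilde G^-+S$ is right-continuous, and $G^+=\tilde G^++S+D=\tilde G^++\sum_{y<x}|D(y)|$ is increasing. The decomposition is exact because $\tilde\La+D=\La$. Verifying this bookkeeping carefully is where we expect most of the care to be needed.
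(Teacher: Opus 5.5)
Your proposal is correct. For (a)--(d) you sketch the standard Jordan-decomposition facts that the paper simply cites from Folland. For (e) you take a genuinely different route.

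The paper starts from an arbitrary decomposition $\La=\La(-\infty)+G^+-G^-$ given by (c) and works jump by jump. At each $x$ with $G^-(x+)>G^-(x)$, the hypothesis $\La(x+)-\La(x)\ge 0$ forces the right jump of $G^+$ at $x$ to dominate that of $G^-$. One may therefore reset $G^-(x)$ to $G^-(x+)$ and raise $G^+(x)$ by the same amount. In effect this replaces $G^-$ by its right-continuous regularisation $G^-(\cdot+)$, and $G^+$ by $\widetilde G^+:=G^++G^-(\cdot+)-G^-$. The latter is still increasing, because $\widetilde G^+(z)-\widetilde G^+(y)$ splits into $G^+(z)-G^+(y+)$, $\La(y+)-\La(y)$ and $G^-(z+)-G^-(z)$, all nonnegative.

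You instead regularise $\La$ itself and apply (d) to $\tilde\La=\La(\cdot+)$. You then absorb the defect $D=\La-\tilde\La\le 0$ by adding the right-continuous jump function $S(x)=\sum_{y\le x}|D(y)|$ to both parts. Your final choice is $G^-=\tilde G^-+S$ and $G^+=\tilde G^++S+D=\tilde G^++\sum_{y<x}|D(y)|$. It is increasing, bounded, vanishes at $-\infty$, has $G^-$ right-continuous, and reproduces $\La$, so it works.

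The paper's route is more economical: it needs only (c) and a pointwise comparison of jumps. Yours additionally requires (d), the summability $\sum_x|D(x)|\le\TV(\La)$, and right-continuity of $S$. In exchange it produces an explicit decomposition: the right-continuous Jordan parts of the regularisation plus a pure-jump correction.

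Two small repairs are needed.
\begin{enumerate}
\item Your justification of the summability (``each such jump contributes twice to the variation'') is false. For $\La=\ind_{(0,\infty)}$, which satisfies the hypothesis of (e), one has $|D(0)|=1=\TV(\La)$. The bound itself does hold. For defect points $x_1<\dots<x_k$, pick $y_i>x_i$ (with $y_i<x_{i+1}$) such that $|\La(y_i)-\La(x_i+)|$ is small. Then evaluate the variation along $x_1<y_1<x_2<\dots<x_k<y_k$.
\item The write-up of (e) should state the final choice $G^-:=\tilde G^-+S$, $G^+:=\tilde G^++S+D$ directly, rather than passing through the two abandoned attempts.
\end{enumerate}
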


\begin{proof}
For (a)--(d) we refer to Section 3.5 of \cite{Fol99}. 
For (e), write $\La=\La(-\infty)+G^+-G^-$ and let $x$ be such that $G^-$ is not continuous at $x$, i.e., $G^-(x+)>G^-(x)$. 
    As $0\le \La(x+)-\La(x)=G^+(x+)-G^+(x)-|G^-(x+)-G^-(x)|$, 
    $G^+$ must also be discontinuous at $x$ and its jump must be at least as large as the one of $G^-$.
    Moreover, 
    \[G^+(y)\ge G^+(x+)\ge G^+(x)+G^-(x+)-G^-(x),\quad y>x.\]
    Hence, modify $G^{\pm}$ to be 
    \[\widetilde G^-(y):=\begin{cases}G^-(y)&\text{if }y\neq x,\\
    G^-(x+)&\text{if }y=x,\end{cases}\qquad \widetilde G^+(y):=\begin{cases}G^+(y)&\text{if }y\neq x,\\
    G^+(x)+G^-(x+)-G^-(x)&\text{if }y=x.\end{cases}
    \]
    Do so with all countably many discontinuities of $G^-$. 
\end{proof}

\begin{lemma}\label{lem:measure}
Let $\La\colon\R\to\R$ be right-continuous and of bounded variation.
\begin{enumerate}[label=\tn{(\alph*)}]
\item There is a unique finite signed measure $\mu_\La$ on $(\R,\mathcal B(\R))$ satisfying 
\[
\mu_\La\big((-\infty,x]\big)=\La(x)-\La(-\infty),\qquad x\in\R.
\]
In particular, among functions sharing the same
value $\La(-\infty)$, the assignment $\La\mapsto\mu_\La$ is injective.
\item The total variation measure satisfies $|\mu_\La|(I)=\TV(\La;I)$ for every open
interval $I\subseteq\R$; hence $|\mu_\La|(\R)=\TV(\La)$, and $\mu_\La=0$ if and only if
$\La$ is constant.
\item The support $\tn{supp}(\mu_\La)$ is closed, and $x\notin\tn{supp}(\mu_\La)$ if and
only if $\La$ is constant on some neighbourhood of $x$. Consequently
$\R\setminus\tn{supp}(\mu_\La)$ is the union of the at most countably many maximal open
intervals on which $\La$ is constant, and $\La$ is constant on each connected component of
$\R\setminus\tn{supp}(\mu_\La)$.
\item For every $g\in\Scal$, the composition $\La\circ g$ is right-continuous, of
bounded variation, and satisfies
\[
\TV(\La\circ g)=\TV(\La)\and\mu_{\La\circ g}(A)=\mu_\La\big(g(A)\big),~A\in\mathcal B(\R).
\]
In particular $\tn{supp}(\mu_{\La\circ g})=g^{-1}\big(\tn{supp}(\mu_\La)\big)$ and
$\La\circ g=\La$ if and only if $\mu_\La(g^{-1}(A))=\mu_\La(A)$
for all $A\in\mathcal B(\R)$.
\item Any $g\in\mathcal S$ preserves $\mu_\La$ only if it also preserves $|\mu_\La|$. 
\end{enumerate}
\end{lemma}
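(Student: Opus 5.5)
We prove the five items of Lemma~\ref{lem:measure} in order, relying on the standard theory of Lebesgue--Stieltjes measures for right-continuous functions of bounded variation, as in Section~3.5 of \cite{Fol99}.

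\textbf{Items (a)--(c).} For (a), Lemma~\ref{lem:BV}(c)--(d) gives $\La=\La(-\infty)+G^+-G^-$ with $G^\pm$ increasing, bounded and right-continuous. Each $G^\pm$ induces a finite positive Lebesgue--Stieltjes measure $\mu^\pm$ with $\mu^\pm((-\infty,x])=G^\pm(x)$. Setting $\mu_\La:=\mu^+-\mu^-$ produces a measure with the required values on half-lines. Uniqueness follows from the $\pi$--$\lambda$ theorem, since half-lines form a $\pi$-system generating $\mathcal B(\R)$. Injectivity is then immediate, because $\mu_\La$ determines $\La-\La(-\infty)$.

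For (b), we cite Folland's identity $|\mu_\La|((-\infty,x])=T_\La(x)$, where $T_\La$ is the total variation function. Taking differences and left limits yields $|\mu_\La|((a,b))=T_\La(b-)-T_\La(a)$. This equals $\TV(\La;(a,b))$, using right-continuity at $a$ to include variation arbitrarily close to $a$. The case $I=\R$ and the characterisation of $\mu_\La=0$ follow.

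For (c), $\tn{supp}$ is closed by definition as the complement of the union of open $|\mu_\La|$-null sets. By (b), an open interval is $|\mu_\La|$-null if and only if $\La$ has zero variation on it, i.e.\ $\La$ is constant there. Connected components of the open set $\R\setminus\tn{supp}(\mu_\La)$ are countably many disjoint open intervals. Each is exhausted by subintervals on which $\La$ is locally constant, and local constancy on a connected set gives constancy.

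\textbf{Item (d).} Right-continuity of $\La\circ g$ holds because $g$ is continuous and strictly increasing, so $y\downarrow x$ implies $g(y)\downarrow g(x)$. Since $g$ is an order-preserving bijection of $\R$, it maps partitions bijectively onto partitions, so $\TV(\La\circ g)=\TV(\La)$; also $(\La\circ g)(-\infty)=\La(-\infty)$. Next, $A\mapsto\mu_\La(g(A))$ is a finite signed measure, because $g$ is a Borel isomorphism. It agrees with $\mu_{\La\circ g}$ on half-lines, since $g((-\infty,x])=(-\infty,g(x)]$, so by the uniqueness in (a) the two measures coincide. The support formula follows by applying the identity to open sets.

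For the last equivalence, if $\La\circ g=\La$ then $\mu_\La(g(A))=\mu_\La(A)$ for all $A$. Substituting $A=g^{-1}(B)$ gives $\mu_\La(B)=\mu_\La(g^{-1}(B))$. Conversely, measure preservation under $g^{-1}$ gives $\mu_{\La\circ g}=\mu_\La$, and then injectivity in (a), together with the equal limits at $-\infty$, yields $\La\circ g=\La$.

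\textbf{Item (e).} Suppose $\mu_\La\circ g^{-1}=\mu_\La$. The push-forward of a signed measure has total variation at most the push-forward of its total variation, so $|\mu_\La|\le|\mu_\La|\circ g^{-1}$. The same argument with $g^{-1}$, which also preserves $\mu_\La$, gives $|\mu_\La|\le|\mu_\La|\circ g$, and substituting $g^{-1}(B)$ yields the reverse inequality. Alternatively, one can push the Hahn decomposition through the bijection $g$.

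\textbf{Main obstacle.} The delicate step is (b). Its justification requires handling the endpoint behaviour of open intervals and the atoms of $|\mu_\La|$ carefully, reconciling $T_\La(b-)$ with the supremum over partitions inside $(a,b)$. We will argue this via approximating partitions whose endpoints tend to $a$ and $b$ from inside, using right-continuity at $a$ and left limits at $b$.
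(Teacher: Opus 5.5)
Your proposal is correct. Items (a)--(d) follow the paper's route: Lebesgue--Stieltjes measures of right-continuous $G^\pm$, uniqueness from agreement on half-lines, and transport along the increasing homeomorphism $g$. Your (b) goes further than the paper, which only cites Folland; you derive it from $|\mu_\La|=\mu_{T_\La}$ and check that $T_\La(b-)-T_\La(a)=\TV(\La;(a,b))$, using right-continuity at $a$. That bookkeeping is sound. One small point: the support formula in (d) is not literally obtained by applying the signed identity $\mu_{\La\circ g}(A)=\mu_\La(g(A))$ to open sets, because support is defined through $|\mu|$. It follows at once either from (c), since $\La\circ g$ is constant near $x$ iff $\La$ is constant near $g(x)$, or from the fact that total variation commutes with transport by a bijection. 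Your own argument in (e) provides the latter.

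The genuine difference is in (e). The paper argues through the function. By (d), preservation of $\mu_\La$ means $\La\circ g=\La$. Then (b) and the reparametrisation invariance of total variation give
\[
|\mu_\La|\big(g^{-1}((x,y))\big)=\TV\big(\La;(x,y)\big)=|\mu_\La|\big((x,y)\big)
\]
on open intervals. The extension to all Borel sets, by a $\pi$--$\lambda$ argument, is left implicit. You instead use the abstract inequality $|g_*\mu|\le g_*|\mu|$, applied to both $g$ and $g^{-1}$. This needs neither (b), (d), nor bounded variation. It holds for any finite signed measure and any bimeasurable bijection, and it yields equality on all Borel sets at once. The paper's route in turn stays with the function-level picture, where preservation means $\La\circ g=\La$ and total variation is reparametrisation-invariant. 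That is the viewpoint Theorem~\ref{thm:MPT} exploits when it passes from $\mathcal G(\La)$ to $\mathcal G(\mu_\La)$ and then applies Poincar\'e recurrence to $|\mu_\La|$.
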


\begin{proof}
(a) A right-continuous function of bounded variation is a difference of two bounded
increasing right-continuous functions by Lemma~\ref{lem:BV}\tn{(d)}, and each such
function is the cumulative function of a unique finite Borel measure by the
Lebesgue--Stieltjes correspondence \cite[\S3.5]{Fol99}; taking their difference yields
$\mu_\La$. 
Injectivity follows from $\La(x)=\La(-\infty)+\mu_\La((-\infty,x])$, $x\in\R$. 

(b) The identity $|\mu_\La|(I)=\TV(\La;I)$ is the Lebesgue--Stieltjes description of
the total variation measure \cite[\S3.5]{Fol99}; the two consequences follow by taking
$I=\R$ and noting that $\TV(\La)=0$ characterises constant functions.

(c) By definition $x$ lies in $\R\setminus\tn{supp}(\mu_\La)$ if and only if some open neighbourhood
$U$ of $x$ satisfies $|\mu_\La|(U)=0$. 
By (b), the latter is equivalent to $\La$ being constant on $U$. Thus
$\R\setminus\tn{supp}(\mu_\La)$ is the largest open set on which $\La$ is locally constant;
its connected components are open intervals, at most countably many since pairwise
disjoint, and on each of them $\La$ is
constant.

(d) As $g\in\Scal$ is an increasing homeomorphism, $g(-\infty)=-\infty$, so
$\La\circ g$ is right-continuous with $(\La\circ g)(-\infty)=\La(-\infty)$. The
increasing reparametrisation leaves the total variation unchanged, i.e., $\TV(\La\circ g)=\TV(\La)<\infty$. For every $x\in\R$,
\[
\mu_{\La\circ g}\big((-\infty,x]\big)=\La(g(x))-\La(-\infty)
=\mu_\La\big((-\infty,g(x)]\big)=\mu_\La\big(g((-\infty,x])\big).
\]
Both $\mu_{\La\circ g}$ and $A\mapsto\mu_\La(g(A))$ are finite signed measures, and a
finite signed measure is determined by its values on the half-lines $(-\infty,x]$; hence
$\mu_\La=\mu_{\La\circ g}$.
The support identity follows since $g$ is a
homeomorphism, and the final equivalence combines this with the injectivity in \tn{(a)}.

(e) A $g\in\Scal$ preserves $\mu_\La$ if and only if $\La\circ g=\La$ by (b). Hence, for $x<y$,  
\[|\mu_\La|\big(g^{-1}\big((x,y)\big)\big)=\TV\big(\La\circ g;\big(g^{-1}(x),g^{-1}(y)\big)\big)=\TV\big(\La;(x,y)\big)=|\mu_\La|(x,y).\]
\end{proof}

\section{On local right monotonicity}
\label{sec:ass}
The following lemma provides a deeper analysis of the property of local right monotonicity from Definition~\ref{def:LRM}. 

\begin{lemma}\label{lem:structure}
Let $\La\colon\R\to[0,1]$. Then $\La$ is locally right monotone if and only if
there exists a family $\mathcal J$ of pairwise disjoint open
        intervals such that:
        \begin{enumerate}[label=\tn{(\roman*)}]
            \item $\La$ is monotone on each interval in $\mathcal J$.
            \item The complement
            \[C:=\R\setminus\bigcup_{I\in\mathcal J}I\]
            is right-isolated in the sense that for all $x \in C$ there exist $y > x$ such that $(x,y) \cap C = \varnothing$. 
        \end{enumerate}
\end{lemma}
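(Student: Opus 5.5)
The two implications are proved separately, and the main work is in building the family $\mathcal J$ from local right monotonicity.

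\emph{Sufficiency.} Suppose such a family $\mathcal J$ exists and fix $x\in\R$. If $x$ lies in some $I=(\alpha,\beta)\in\mathcal J$, we take $y=\beta$. Then $(x,y)\subseteq I$, so $\La|_{(x,y)}$ is monotone by (i). If instead $x\in C$, condition (ii) gives $y>x$ with $(x,y)\cap C=\varnothing$. The interval $(x,y)$ is connected and covered by the pairwise disjoint open intervals of $\mathcal J$, so it lies inside a single $I\in\mathcal J$; otherwise it would split into two disjoint nonempty open sets. Hence $\La$ is monotone on $(x,y)$.

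\emph{Necessity.} Assume $\La$ is locally right monotone. Let $\mathcal J$ be the family of \emph{maximal} open intervals on which $\La$ is monotone, taking components rather than arbitrary intervals so that disjointness holds.

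\textbf{Step 1 (choice of $U$).} Define $U$ as the union of all open intervals on which $\La$ is monotone. One might take the connected components of $U$, but a component of $U$ need not carry a single monotonicity direction. For example, $\La$ could increase on $(0,1)$ and decrease on $(1,2)$, with $1$ itself inside some monotone interval only from the right; then $1\notin U$. The difficulty is that two monotone intervals overlapping on a nondegenerate interval, one increasing and one decreasing, force $\La$ to be constant on the overlap, yet their union need not be monotone. So components of $U$ are not the right objects.

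\textbf{Step 1 (revised).} Instead, call $x$ a \emph{breakpoint} if $\La$ is monotone on no open interval containing $x$, and let $C$ be the set of breakpoints. Then $C$ is closed. For each $x\in C$, local right monotonicity gives $y$ with $\La$ monotone on $(x,y)$, so every point of $(x,y)$ is a non-breakpoint. This is exactly the right-isolation in (ii).

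\textbf{Step 2 (decomposing the complement).} The complement $\R\setminus C$ is open, so it is a disjoint union of open intervals $(\alpha,\beta)$. We must show that $\La$ is monotone on each such interval; this is the main obstacle. Locally, on each compact $[s,t]\subset(\alpha,\beta)$, finitely many monotone open intervals cover $[s,t]$. However, adjacent intervals may have opposite directions, so we cannot conclude monotonicity of the union directly.

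If a single component does not admit a single direction, the fix is to refine the decomposition. We add to $C$ the points where the monotonicity direction changes. Near such a turning point $z$, $\La$ is monotone in one direction on $(z-\eps,z]$ and in the other on $[z,z+\eps)$, possibly with a flat overlap.

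We then check that the enlarged $C$ is still right-isolated. Local right monotonicity gives a monotone interval to the right of any point. Any turning point inside it would contradict monotonicity on that interval, unless $\La$ is constant there, in which case no change of direction is needed. So $C$ has no accumulation from the right.

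\textbf{Step 3 (conclusion).} After the refinement, each component $I$ of $\R\setminus C$ is an open interval on which all local monotone pieces share a common direction. A chain-of-overlapping-intervals argument on compact subintervals then shows $\La$ is monotone on $I$. Taking $\mathcal J$ to be these components gives (i), and Step 2 gives (ii).

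The main effort goes into Step 2: defining the turning points carefully, treating constant stretches as compatible with either direction, and verifying that adding them preserves right-isolation.
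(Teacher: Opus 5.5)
Your sufficiency argument and your Step 1 are fine. Your breakpoint set is exactly $\R\setminus\bigcup M$ in the paper's notation; it is closed and right-isolated. The gap is Step 2, which you call the main effort but only describe.

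\emph{The turning points are never defined, and no canonical definition does the job.} Inside $U=\R\setminus C$ every point has a monotone neighbourhood, so $\Lambda$ never changes direction at a single point. It can only do so across an interval on which it is constant. Call $z\in U$ of increasing (decreasing) type if $\Lambda$ is increasing (decreasing) on a neighbourhood of $z$. The points of both types are exactly those near which $\Lambda$ is locally constant. Read one way, ``the points where the direction changes'' is empty; read another way, it contains whole flat intervals. For example, let $\Lambda(t)=\frac14\min\{|t|,1\}$ for $t\le 2$ and $\Lambda(t)=\frac14(3-t)^+$ for $t>2$. Then:
\begin{itemize}
\item the only breakpoint is $0$;
\item the component $(0,\infty)$ of $U$ is not monotone;
\item splitting it at $z$ gives two monotone pieces exactly when $z\in[1,2]$, and putting all of $[1,2]$ into $C$ destroys right-isolation.
\end{itemize}
The missing idea is a \emph{selection}. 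You must add exactly one point to each component of the locally-constant set whose endpoints are of increasing-only and decreasing-only type respectively. The paper obtains this via maximal monotone intervals (Zorn). Two distinct maximal intervals can overlap only if they have opposite directions, since otherwise their union would be monotone; hence they overlap only where $\Lambda$ is constant. The paper then cuts each overlap at one interior point, and every point of the resulting $C$ is a left endpoint of a member of $\mathcal J$, which gives right-isolation.

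\emph{Your right-isolation argument for the enlarged set is wrong as stated.} In the example, $x=0\in C$ and $\Lambda$ is increasing on $(0,2)$. Yet a cut (say at $\frac32$) must lie inside this monotone interval, in a region where $\Lambda$ is constant. So a change of direction \emph{is} needed there, contrary to your claim. What must be shown is that such cuts do not accumulate at $x$ from the right. Suppose $x\in C$ and $\Lambda$ is increasing on $(x,y)$. Every point of $(x,y)$ is of increasing type, so a separating flat stretch meeting $(x,y)$ has its decreasing-only endpoint at or beyond $y$ and therefore contains a left neighbourhood of $y$. Its left endpoint exceeds $x$ because $x\notin U$. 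By disjointness, at most one cut lies in $(x,y)$.

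\emph{Step 3 also needs an argument} that each component of the refined complement carries a single direction. Suppose $u<v$ lie in one component, with $u$ of increasing-only type and $v$ of decreasing-only type. Let $t_0$ be the first decreasing-only point after $u$, and $s_0$ the last increasing-only point before $t_0$. Both are attained: after removing the open locally-constant set from $[u,v]$, the two types form complementary closed subsets of a compact set. Then $(s_0,t_0)$ is a separating flat stretch, so it contains a cut, a contradiction. Without these pieces the necessity direction is not proved.
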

\begin{proof}
    To show sufficiency of properties (i) and (ii),  pick $x \in \R$. There are two cases. If $x\in\R\setminus C$, it belongs to an interval
    $I\in\mathcal J$ on which $\La$ is monotone; hence $\La$ is monotone on an
    open interval with left endpoint $x$.
    Else, if $x\in C$, property (ii) delivers a $y>x$ such that
    $(x,y)\cap C=\varnothing$.
    Consider the collection $F:=\{I\in\mathcal J\mid I\cap (x,y)\neq\varnothing\}$.
    Each $t\in(x,y)$ that does not belong to some $I\in F$ would lie in $C$; hence
    \[(x,y)=\bigcup_{I\in F}\big((x,y)\cap I\big).\]
    The sets $(x,y)\cap I$, $I\in F$, are open and, the intervals in $\mathcal J$
    being pairwise disjoint, mutually disjoint. Connectedness of $(x,y)$ therefore
    forces $F$ to consist of a single interval, on which $\La$ is monotone; in
    particular $\La$ is monotone on $(x,y)$.

    To prove necessity, let $M$ be the set of all open intervals on which $\La$ is
    monotone. This set is partially ordered by set inclusion, and chains in $M$
    have upper bounds: the union of a chain of nested monotone intervals is again
    a monotone interval, its monotonicity direction being that of any of its
    non-constant members. 
    
    By Zorn's lemma, $M$ contains maximal elements.
    Fixing a monotone interval $(x,y)$ and applying the same argument to
    $M':=\{I\in M\mid (x,y)\subseteq I\}$, this set has maximal elements as well,
    and a maximal element of $M'$ is maximal in $M$; hence every interval in $M$
    lies in a maximal one.
    Distinct maximal monotone intervals cannot contain one another, and if two of them have a nonempty intersection and the same monotonicity behaviour, this would contradict their maximality. 
    Hence, two different maximal elements can only have a nonempty intersection if $\La$ is constant on said intersection.
    Cutting each such overlap at
    one interior point turns the maximal monotone intervals into a pairwise
    disjoint family $\mathcal J$ of monotone open intervals; its union
    $\bigcup_{I\in\mathcal J}I$ is $\bigcup M$ deprived of the (at most countably
    many) cut points.

    Set $C:=\R\setminus\bigcup_{I\in\mathcal J}I$ and pick $x\in C$. If
    $x\notin\bigcup M$, then by local right monotonicity there is $y>x$ with $\La$
    monotone on $(x,y)$, and the maximal monotone interval containing $(x,y)$ has
    left endpoint $x$ (otherwise $x\in\bigcup M$); if instead $x\in\bigcup M$, then
    $x$ is one of the cut points, chosen in the interior of a constant overlap. In
    both cases $x$ is the left endpoint of an interval in $\mathcal J$, so
    $(x,x+s)\cap C=\varnothing$ for some $s>0$. Hence $C$ is right-isolated.
\end{proof}

\begin{remark}
\begin{enumerate}[label=(\alph*)]

    \item The set $C$ is closed, and the right-isolation property forces it to be countable: sending each $x\in C$ to a rational lying in a right
    neighbourhood $(x,x+s)$ disjoint from $C$ defines an injection into $\Q$. Being
    closed and countable, $C$ is nowhere dense and has zero Lebesgue measure; equivalently,
    $\bigcup_{I\in\mathcal J}I=\R\setminus C$ is open, dense and of full
    measure, so $\La$ is monotone on a neighbourhood of almost every point.

    \item No point of $C$ is
    an accumulation point of $C$ from the right, but $C$ may cluster from the
    left and need not be discrete. For instance $C=\{0\}\cup\{-\frac 1 n\mid n\in\N\}$ is
    possible.

    \item The set $C$ marks the separations between the maximal intervals on which
    $\La$ is monotone. Hence the strict local extrema of $\La$ form a subset of $C$.

    \item Locally right monotone functions $\La$ also enjoy a
    property reminiscent of the {\em pseudomonotonicity} of
    \cite[Proposition~1.15]{App}: for every interval $J\subseteq[0,1]$, the preimage
    $\La^{-1}(J)$ is a \emph{countable} (instead of finite) union of intervals.
\end{enumerate}
\end{remark}

\section{Ordinal covariance groups of risk measures}\label{sec:example}

In this section we derive more formally the ordinal covariance groups of Example \ref{ex:OCGs}. 

\subsubsection*{Mean} If $g(x)=ax+b$ then $\E[g(X)]=a\E[X]+b=g(\E[X])$, so
$\mathrm{Aff}^+\subseteq\mathcal G(\E)$. Conversely, let $g\in\mathcal G(\E)$ and apply
$\E[g(X)]=g(\E[X])$ to the two-point law $X=x_1$ with probability $t$ and $X=x_2$ with
probability $1-t$, obtaining
$
t\,g(x_1)+(1-t)\,g(x_2)=g\big(t x_1+(1-t)x_2\big)
$, for all $x_1,x_2\in\R$ and $t\in[0,1].$
This is Jensen's functional equation; its monotone solutions are affine, and $g\in\Scal$
increasing forces the slope to be positive. Hence $\mathcal G(\E)=\mathrm{Aff}^+$.

\subsubsection*{Standard deviation} 
Since $\mathrm{SD}(aX)=a\,\mathrm{SD}(X)$ for $a>0$, dilations
$g(x)=ax$ satisfy $\mathrm{SD}(g(X))=a\,\mathrm{SD}(X)=g(\mathrm{SD}(X))$, so
$\mathcal D\subseteq\mathcal G(\mathrm{SD})$. Conversely, let
$g\in\mathcal G(\mathrm{SD})$.
Let $x\in\R$ and infer from ordinal covariance that 
\[\mathrm{SD}(x)=0=\mathrm{SD}(g(x))=g(\mathrm{SD}(x))=g(0).\]
Now let $y>0$ and suppose that $\P(X=x)=\P(X=x+2y)=\frac 1 2$. 
Then $\mathrm{SD}(X)=y$ and
$\mathrm{SD}(g(X))=\tfrac12\big(g(x+2y)-g(x)\big)$. Ordinal covariance
gives
\[g(x+2y)-g(x)=2g(y).\]
Setting $x=0$ and $x=-y$
yields $g(2y)=2g(y)$ and $g(-y)=-g(y)$, 
i.e., we can rewrite the equation above as 
\[g(x+y)-g(x)=g(y),\quad\text{for all }x\in\R,~y\ge 0.\]
By continuity and monotonicity, $g(x)=ax+g(0)=ax$ for some $a>0$, i.e., $\mathcal G(\mathrm{SD})=\mathcal D$. 

\subsubsection*{Entropic} Translation invariance $\rho_\gamma(X+b)=\rho_\gamma(X)+b$ gives
$\mathcal T\subseteq\mathcal G(\rho_\gamma)$. For the reverse inclusion we use the
certainty-equivalent form $\rho_\gamma=u^{-1}\circ\E\circ u$ with $u(x)=e^{\gamma x}$.
For $g\in\Scal$ set $h:=u\circ g\circ u^{-1}$ on $u(\R)=(0,\infty)$ and let $Y:=u(X)$
range over all random variables whose laws have finite support in $(0,\infty)$. Then $u(g(X))=h(Y)$, so
$
\rho_\gamma(g(X))=u^{-1}\big(\E[h(Y)]\big)
$
and
$g(\rho_\gamma(X))=u^{-1}\big(h(\E[Y])\big),
$
and since $u^{-1}$ is injective, $g\in\mathcal G(\rho_\gamma)$ iff
$\E[h(Y)]=h(\E[Y])$ for all such $Y$. By the argument for the mean, this is tantamount to $h$
being affine. 
As an increasing self-bijection of $(0,\infty)$, it must be of the form
$h(y)=\alpha y$ with $\alpha>0$. Therefore
$g(x)=u^{-1}(\alpha\,u(x))=x+\tfrac1\gamma\log\alpha$, i.e.\ $g\in\mathcal T$. Hence
$\mathcal G(\rho_\gamma)=\mathcal T$.

\subsubsection*{Worst case} As $\rho^{\mathrm w}=Q_1^-$ is the $\La$-quantile associated 
with the constant function $\La\equiv 1$, Example~\ref{ex:GLambda}(a) yields 
$\G(\La)=\Scal$. The inclusion $\G(\La)\subseteq\G(Q_\La^-)$, established in the first 
part of the proof of Theorem~\ref{thm:ordinal} without further assumptions on $\La$, 
gives $\G(\rho^{\mathrm w})=\Scal$. Alternatively, one checks directly that 
$\esssup g(X)=g(\esssup X)$ for all $g\in\Scal$: writing $s:=\esssup X$, the inequality 
``$\le$'' holds because $\P(X>s)=0$ forces $\P(g(X)>g(s))=0$, while ``$\ge$'' holds 
because $\P(X>s-\eps)>0$ gives $\esssup g(X)\ge g(s-\eps)$ for every $\eps>0$, and $g$ 
is continuous.

\end{document}